\newtheorem{theorem}{Theorem}[section]
\newtheorem{lemma}[theorem]{Lemma}
\newtheorem{corollary}[theorem]{Corollary}
\newtheorem{proposition}[theorem]{Proposition}
\numberwithin{equation}{section}
\theoremstyle{definition}
\newtheorem{definition}[theorem]{Definition}
\newtheorem{remark}[theorem]{Remark}
\newtheorem{question}[theorem]{Question}
\def\Cset{\mathbb{C}}
 \def\Zset{\mathbb{Z}}
\def\Fset{\mathbb {F}}
\def\va{ \varepsilon}
\def\wt{\widetilde}
\def\wh{\widehat}
\def\leq{\leqslant }
\def\geq{\geqslant}
\def\A{\mathcal{A}}
\def\Ad{\A_d}
\def\R{\mathcal{R}}
\def\Rd{\R_d}
\def\D{\mathcal{D}}
\def\Dd{\D_d}
\def\Gd{\mathfrak G_d}
\def\Zad{\Zset^{\Ad}}
\def\G{\mathfrak G}
\def\H{\mathfrak H}
\def\K{\mathfrak K}
\def\S{\mathfrak S}
\def\MP{{\rm Mod}(\pi)}
\begin{document}

\title[Zorich conjecture for hyperelliptic Rauzy--Veech groups]{Zorich conjecture for hyperelliptic Rauzy--Veech groups}

\author[A. Avila, C. Matheus and J.-C. Yoccoz]{Artur Avila, Carlos Matheus and Jean-Christophe Yoccoz}

\address{Artur Avila:
CNRS UMR 7586, Institut de Math\'ematiques de Jussieu - Paris Rive Gauche,
B\^atiment Sophie Germain, Case 7012, 75205 Paris Cedex 13, France
\&
IMPA, Estrada Dona Castorina 110, 22460-320, Rio de Janeiro, Brazil
}

\email{artur@math.jussieu.fr}

\address{Carlos Matheus:
Universit\'e Paris 13, Sorbonne Paris Cit\'e, CNRS (UMR 7539),
F-93430, Villetaneuse, France.
}

\email{matheus.cmss@gmail.com}

\address{Jean-Christophe Yoccoz:
Coll\`ege de France (PSL), 3, Rue d'Ulm, 75005 Paris, France
}

\email{jean-c.yoccoz@college-de-france.fr}

\date{\today}

\begin{abstract}
We describe the structure of hyperelliptic Rauzy diagrams and hyperelliptic Rauzy--Veech groups. In particular, this provides a solution of the hyperelliptic cases of a conjecture of Zorich on the Zariski closure of Rauzy--Veech groups.
\end{abstract}
\maketitle


\section{Introduction}

The Kontsevich--Zorich conjecture provides a precise description of the deviations of ergodic averages of almost every interval exchange transformations and translation flows in terms of the Lyapunov exponents of the Kontsevich--Zorich (KZ) cocycle with respect to the Masur--Veech measures on the strata of moduli spaces of translation surfaces.

After an important partial progress of Forni~\cite{Forni} in 2001, the Kontsevich--Zorich conjecture was fully established by Avila and Viana \cite{AV} in 2007 via the study of certain combinatorial models for the Kontsevich--Zorich cocycles called Rauzy--Veech groups. In a nutshell, Avila and Viana confirmed the Kontsevich--Zorich conjecture by showing that the Rauzy--Veech groups are pinching and twisting.

Nevertheless, Avila and Viana pointed out in \cite[Remark 6.12]{AV} that their methods leave open an  interesting conjecture of Zorich (cf. \cite[Appendix A.3]{Zorich4}) concerning the Zariski denseness of Rauzy--Veech groups in symplectic groups. Indeed, it is known\footnote{See the Appendix \ref{a.pinching+twisting-Zariski} below for a concrete example.} among experts that some pinching and twisting groups have small Zariski closures, so that it is not possible to abstractly deduce\footnote{On the other hand, Zorich conjecture implies Avila--Viana theorem on the pinching and twisting properties for Rauzy--Veech groups. In fact, Zariski density implies the pinching property by the work of Benoist \cite{Benoist}, while the twisting property is automatic (because it has to do with minors of matrices). Hence, our proofs of Theorem \ref{t.AMY-intro} give new proofs of Avila--Viana theorem in the particular case of hyperelliptic Rauzy--Veech groups.} Zorich's conjecture from Avila--Viana techniques.

In this paper, we confirm Zorich conjecture for \emph{hyperelliptic} Rauzy--Veech groups by proving the following stronger result.

\begin{theorem}\label{t.AMY-intro} The Rauzy--Veech group associated to a hyperelliptic connected component of a stratum of the moduli space of genus $g$ translation surfaces is an explicit, finite-index subgroup of the symplectic group $Sp(2g,\mathbb{Z})$.
\end{theorem}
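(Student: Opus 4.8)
The plan is to combine the explicit combinatorial description of hyperelliptic Rauzy diagrams established above with classical generation results for symplectic groups over $\mathbb{Z}$. Fix a vertex $\pi_0$ of the hyperelliptic Rauzy diagram $\mathcal{R}$ and recall that the associated Rauzy--Veech group is the subgroup of $Sp(2g,\mathbb{Z})$ generated by the homology matrices $B_\gamma$ of all loops $\gamma$ in $\mathcal{R}$ based at $\pi_0$, where each elementary Rauzy step contributes a transvection-type matrix $\mathrm{Id}+E_{\alpha\beta}$ acting on $H_1\cong\mathbb{Z}^{2g}$. The first step is to reduce this a priori infinite family to a finite, explicit one: the recursive, nested structure of the hyperelliptic diagram (which carries the obvious $\mathbb{Z}/2$-symmetry coming from the hyperelliptic involution) lets one write every $B_\gamma$ as a word in finitely many matrices $M_1,\dots,M_r$ attached to a small set of ``elementary'' sub-loops. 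This reduction is essentially bookkeeping with the hyperelliptic permutations, and the symmetry keeps it manageable.

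With the $M_i$ in hand, the next step is to identify the target group. Direct inspection shows that the $M_i$ — hence the whole Rauzy--Veech group $RV$ — satisfy certain explicit congruence conditions (for instance a condition modulo $2$ on a distinguished set of entries), the precise conditions depending on the genus $g$ and on which hyperelliptic component, $\mathcal{H}(2g-2)^{\mathrm{hyp}}$ or $\mathcal{H}(g-1,g-1)^{\mathrm{hyp}}$, one is considering. These conditions cut out an explicit finite-index subgroup $\Gamma\le Sp(2g,\mathbb{Z})$, which gives immediately the inclusion $RV\subseteq\Gamma$. The content of the theorem is the reverse inclusion $RV=\Gamma$.

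To prove $RV\supseteq\Gamma$ I would argue by induction on $g$. The crucial structural input is that the hyperelliptic Rauzy diagram in genus $g$ contains a copy of the one in genus $g-1$ (as a sub-diagram, possibly after an explicit modification of the permutations), so that $RV$ in genus $g$ contains a copy of $RV$ in genus $g-1$, which by the inductive hypothesis is the finite-index subgroup $\Gamma_{g-1}$ of a standardly embedded $Sp(2(g-1),\mathbb{Z})$. One then throws in the finitely many remaining generators $M_i$ (those involving the two new homology coordinates) and uses a presentation of $Sp(2g,\mathbb{Z})$ by elementary symplectic matrices with Steinberg-type relations to verify that $\Gamma_{g-1}$ together with these extra matrices already generates $\Gamma_g$; equivalently, that $RV$ contains a full set of symplectic transvections generating $\Gamma_g$. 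The base cases $g=1$ (where $Sp(2,\mathbb{Z})=SL(2,\mathbb{Z})$) and $g=2$ are handled by an explicit, finite computation.

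The main obstacle is this last step: upgrading from Zariski density to finite index with a controlled index. Zariski density of $RV$ follows comparatively softly — either from the pinching and twisting property of Avila--Viana together with Benoist's theorem, or by exhibiting a pair of suitable non-commuting transvections — but it gives no control on the index, and no general arithmeticity principle (of Venkataramana type) delivers the \emph{explicit} subgroup claimed in the theorem. That must come from the hands-on analysis above, and the delicate point is to track precisely which elementary symplectic matrices arise as products of Rauzy steps and to organize this into a clean induction, rather than losing information at each stage.
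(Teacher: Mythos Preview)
Your overall strategy aligns with the paper's first proof: identify the target as the subgroup $\mathfrak{G}'_d$ of $Sp(\Omega^{-1}(d),\mathbb{Z})$ cut out by a mod-$2$ condition (image in $SL(\mathbb{F}_2^{\mathcal{A}_d})$ lying in a copy of the symmetric group $\mathfrak{S}_{d+1}$), obtain the easy inclusion $\mathfrak{G}_d\subseteq\mathfrak{G}'_d$, and prove equality by induction using the nesting $\mathcal{R}_d\hookrightarrow\mathcal{R}_{d+1}$. But the mechanism you propose for the inductive step---tracking which elementary symplectic matrices arise and verifying a Steinberg-type presentation of the target---is exactly where you yourself locate the obstacle, and the paper does \emph{not} proceed this way. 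Instead it runs an orbit--stabilizer argument: it shows that the stabilizer of $e_{1-d}$ in $\mathfrak{G}_d$ coincides with its stabilizer $\mathfrak{K}_d$ in $\mathfrak{G}'_d$ (by identifying $\mathfrak{K}_d$ with $\mathfrak{G}'_{d-1}$, or a $\mathbb{Z}$-extension thereof when $d$ is even, and invoking the inductive hypothesis), and separately that the $\mathfrak{G}_d$-orbit of $e_{d-1}$ is the full set of primitive vectors satisfying the congruence (and, for $d$ odd, an affine-hyperplane) condition. Orbit plus stabilizer give $\mathfrak{G}_d=\mathfrak{G}'_d$ at once, with no need to manufacture elementary symplectic generators or check relations among them.

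Two further remarks. First, the paper inducts on $d$, not on $g$: the step $d\mapsto d+1$ alternates between the components $\mathcal{H}(2g-2)^{\mathrm{hyp}}$ ($d$ even) and $\mathcal{H}(g-1,g-1)^{\mathrm{hyp}}$ ($d$ odd, where $\Omega$ is degenerate), and the two parities require genuinely different arguments; your plan to jump $g-1\to g$ in a single step skips this interleaving, and it is not clear the induction closes without it. Second, the paper also gives an independent second proof: the elementary loops act on the canonical hyperelliptic translation surface by Dehn twists commuting with the hyperelliptic involution, which identifies $\mathfrak{G}_d$ with the image of the monodromy representation $\rho_{d+1}\colon B_{d+1}\to Sp(H_1)$ of the braid group, and A'Campo's 1979 theorem then computes this image directly.
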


We refer the reader to Theorem \ref{thm1} below for a precise version of this statement. For now, let us just make some comments on the proof of this result.

Rauzy \cite{Rauzy} discovered a particularly beautiful combinatorial description for hyperelliptic Rauzy diagrams. This description allows us to compute the generators of hyperelliptic Rauzy--Veech groups and, more importantly, to relate distinct hyperelliptic Rauzy--Veech groups via an inductive procedure. In particular, we are able to prove Theorem \ref{t.AMY-intro} by induction (on the complexity of the hyperelliptic Rauzy diagrams): see Section \ref{s.AMYhyp1} below.

After we completed the argument in the above paragraph, M\"oller pointed out (in private communication) that our description of hyperelliptic Rauzy--Veech groups shared some similarities with the work \cite{A'Campo} of A'Campo on certain representations of braid groups defined via homological actions on hyperelliptic Riemann surfaces. As it turns out, this is not a coincidence: we show in Section \ref{s.A'Campo} below that the hyperelliptic Rauzy--Veech groups are naturally related to the images of the monodromy representations considered by A'Campo. In particular, the main results of A'Campo's paper \cite{A'Campo} can be used to give another proof of Theorem \ref{t.AMY-intro}.

\begin{remark} This second proof of Theorem \ref{t.AMY-intro} described in the previous paragraph provides more information about hyperelliptic Rauzy diagrams: for instance, we will show that the image of the natural homomorphism from the fundamental group of hyperelliptic Rauzy diagrams to the mapping class group is an infinite-index subgroup called symmetric mapping class group. In particular, the analog of Theorem \ref{t.AMY-intro} at the fundamental group level is not true. See Section \ref{s.A'Campo} for more details.
\end{remark}

The organization of this paper is the following. In Section \ref{s.hypRV}, we recall some basic facts about hyperelliptic Rauzy diagrams and Rauzy--Veech groups, and we state in Theorem \ref{thm1} the precise version of Theorem \ref{t.AMY-intro}. In Section \ref{s.AMYhyp1}, we give our first proof of Theorem \ref{thm1} by induction on the complexity of hyperelliptic Rauzy diagrams. In Section \ref{s.A'Campo}, we give a second proof of Theorem \ref{thm1} based on the interpretation of hyperelliptic Rauzy--Veech groups in terms of certain monodromy representations of braid groups. In particular, Sections \ref{s.AMYhyp1} and \ref{s.A'Campo} can be read independently of each other. Finally, we exhibit in Appendix \ref{a.pinching+twisting-Zariski} an example of pinching and twisting group with small Zariski closure in order to justify our assertion that Zorich conjecture can not be abstractly reduced to the results of Avila--Viana \cite{AV}.

\begin{remark} In a forthcoming paper \cite{AMY}, we will use the framework of this article to analyze the Kontsevich-Zorich cocycle over certain loci of cyclic covers of hyperelliptic connected components of strata of the moduli space of translation surfaces.
\end{remark}

\begin{remark} In a recent preprint \cite{EFW}, Eskin, Filip and Wright studied the algebraic hull of the Kontsevich--Zorich cocycle and the monodromies associated to general ergodic $SL(2,\mathbb{R})$-invariant probability measures on moduli spaces of translation surfaces. The notion of Rauzy--Veech groups shares some similarities with the algebraic hulls and the monodromies of Masur--Veech measures: roughly speaking, Rauzy--Veech groups, resp. algebraic hulls, resp. monodromies, are related to matrices obtained by following certain orbits of the Teichm\"uller flow, resp. orbits of $SL(2,\mathbb{R})$, resp. arbitrary paths in connected components of strata of moduli spaces of translation surfaces. In particular, one has that Rauzy--Veech groups are subgroups of the monodromies of Masur--Veech measures. Consequently, our Theorem \ref{t.AMY-intro} implies that monodromies of hyperelliptic Masur--Veech measures are commensurable to arithmetic lattices of symplectic groups: this refines Corollary 1.7 in Filip's article \cite{Fi} in this particular setting. On the other hand, the relation between Rauzy--Veech groups and algebraic hull of Masur--Veech measures is not so obvious (partly because the definition of algebraic hull involves representing matrices in \emph{a priori} unknown measurably chosen bases) and, thus, it is not clear that our Theorem \ref{t.AMY-intro} provides any new information related to Corollary 1.4 in Eskin--Filip--Wright paper \cite{EFW}.
\end{remark}

\subsection*{Acknowledgements} The authors are thankful to Pascal Hubert and Martin M\"oller for pointing out to us the references \cite{Rauzy} and \cite{A'Campo}. Also, the authors are grateful to the two referees for their careful reading of this text.

\section{The hyperelliptic Rauzy--Veech group}\label{s.hypRV}

In this entire section, we will assume that the reader has some familiarity with the lecture notes \cite{Y-Pisa} by the third author of this paper. Also, let us point out that the facts stated in the next subsection are just reformulations (in our notations) of the results obtained by Rauzy \cite[Section 4]{Rauzy}.

\subsection {Hyperelliptic Rauzy diagrams: notations and description}\label{ss.hypRauzy} Let $d \geq 2$ be an integer. Let $\Ad$ be the alphabet whose $d$ elements are the integers in arithmetic progression $d-1, \, d-3, \ldots , 1-d$. Let $\iota$ be the involution $k \mapsto -k$ of $\Ad$. We define inductively the hyperelliptic Rauzy class $\Rd$ over $\Ad$ and the associated hyperelliptic Rauzy diagram $\Dd$.
The Rauzy class $\Rd$ contains a central vertex $\pi^* = \pi^*(d) = (\pi^*_t(d),\pi^*_b(d))$ associated to the pair of bijections
$\pi^*_t(d):\Ad\to\{1,\dots,d\}$ and $\pi^*_b(d):\Ad\to\{1,\dots,d\}$ defined by 
$$\pi^*_t(d)(k) = \frac 12 (d+1+k),\quad \pi^*_b(d)(k) = \frac 12 (d+1-k).$$

For $d=2$, this is the only vertex. For $d \geq 2$, $\R_{d+1}$ is the disjoint union of $\pi^*(d+1)$, $j_t(\Rd)$ and $j_b(\Rd)$, where the injective maps $j_t$, $j_b$ are defined as follows: for  $\pi \in \Rd$, writing $j_t(\pi) = t\pi$, $j_b(\pi) = b\pi$, we have that $t\pi=(t\pi_t,t\pi_b)$ and $b\pi=(b\pi_t,b\pi_b)$ are given by the bijections from $\Ad$ to
$\{1,\dots,d\}$ described by the formulas 
$$t\pi_t(-d) =1,\quad \quad \quad t\pi_b(-d) = \pi_b(d-3) ,$$
$$t\pi_t(k) = 1+\pi_t(k-1),$$
$$  t\pi_b(k) = \left \{
\begin{array}{cc}
 \pi_b(k-1)& \text{if } \pi_b(k-1) < \pi_b(d-3),\\
\pi_b(k-1)+1 & \text{if } \pi_b(k-1) \geq \pi_b(d-3),
\end{array} \right.$$
for $2-d \leq k \leq d$, and
$$b\pi_b(d) =1,\quad \quad \quad b\pi_t(d) = \pi_t(3-d) ,$$
$$b\pi_b(k) = 1+\pi_b(k+1),$$
$$  b\pi_t(k) = \left \{
\begin{array}{cc}
 \pi_t(k+1)& \text{if } \pi_t(k+1) < \pi_t(3-d),\\
\pi_t(k+1)+1 & \text{if } \pi_b(k+1) \geq \pi_t(3-d),
\end{array} \right.$$
for $-d \leq k \leq d-2$.

\begin{figure}[h!]
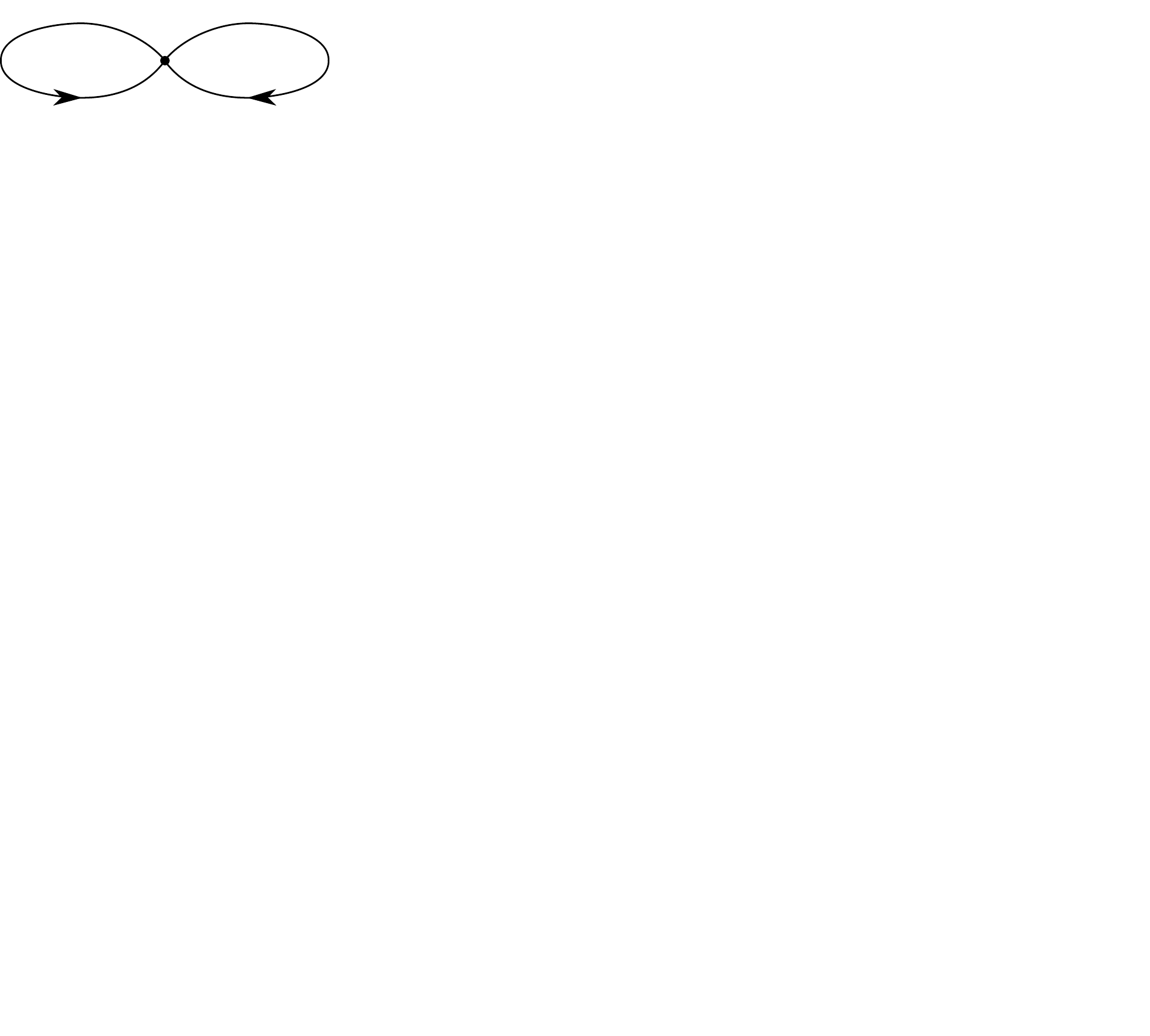\caption{Geometry of the hyperelliptic Rauzy classes $\mathcal{R}_2$, $\mathcal{R}_3$ and $\mathcal{R}_4$.}
\end{figure}

The one-to-one maps $R_t$, resp. $R_b$ from $\Rd$ to itself determining the arrows of $\Dd$ of top, resp. bottom type verify
$$\left \{  \begin{array}{cc} R_t(\pi^*(d+1)) = j_t (\pi^*(d)),\\
 R_b(\pi^*(d+1)) = j_b (\pi^*(d)), \end{array} \right. \quad \left \{  \begin{array}{cc} R_t \circ j_b \circ R_t^{-1} =j_b, \\
R_b \circ j_t \circ R_b^{-1} =j_t, \end{array} \right. $$

$$ \left \{  \begin{array}{cc} R_t \circ j_t \circ R_t^{-1}(\pi) =j_t(\pi), \quad \quad \pi \ne \pi^*(d),\\
R_b \circ j_b \circ R_b^{-1}(\pi) =j_b(\pi), \quad \quad \pi \ne \pi^*(d),\end{array} \right.$$

$$R_t \circ j_t \circ R_t^{-1}(\pi^*(d)) = \pi^*(d+1)= R_b \circ j_b \circ R_b^{-1}(\pi^*(d)).$$

The involution $I_d$ on  $\Rd$ defined by $I_d((\pi_t, \pi_b)) := (\pi_b \circ \iota,\pi_t \circ \iota)$ satisfies
$$I_d(\pi^*(d)) = \pi^*(d), \quad I_{d+1}\circ j_b \circ I_d = j_t , \quad I_d \circ R_b \circ I_d =  R_t.$$

There is a natural one-to-one correspondence $W_d$ between the elements of $\Rd$ and the words in $\{t,b\}$ of length $<d-1$: namely, $W_d(\pi^*(d))$ is the empty word, $W_d (j_t(\pi))$ is the word $tW_{d-1}(\pi)$ and $W_d (j_b(\pi))$ is the word $bW_{d-1}(\pi)$. The involution $I_d$ corresponds to the exchange of the letters $t,b$. One has also
$$W_d (R_t(\pi)) = W_d(\pi)t, \quad W_d (R_b(\pi)) = W_d(\pi)b, \quad \text {if } |W_d(\pi)|<d-2.$$

When $|W_d(\pi)|= d-2$, one writes $W_d(\pi)= W't^m $ with $m\geq 0$ and $W'$ empty or finishing by $b$; one has then $W_d (R_t(\pi))=W'$.  Similarly for $W_d (R_b(\pi))$.

It is also not difficult to recover from $W_d(\pi)$ the winners of the arrows starting from $\pi$: the winner of the arrow of top type starting from $\pi$ is the letter $d-1-2w_b(\pi)$ of $\Ad$, where $w_b(\pi)$ is the number of occurrences of $b$ in $W_d(\pi)$; similarly, the winner of the arrow of bottom type starting from $\pi$ is the letter $1-d+2w_t(\pi)$ of $\Ad$. Observe that we have always
$$ d-1-2w_b(\pi)> 1-d+2w_t(\pi).$$

Another useful property of the hyperelliptic Rauzy diagrams is the following: given any vertex $\pi \in \Rd$, there is an \emph{unique} oriented \emph{simple}\footnote{A path is {\it simple} if it does not pass more than once through any vertex.} path in $\Dd$ from $\pi^*(d)$ to $\pi$. Indeed, this is best seen via the correspondence $W_d$ above: the length of such a path is $|W_d(\pi)|$ and the path itself is through the sequence of initial subwords of $W_d(\pi)$. We will denote by $\gamma^*(\pi)$ this path.

Observe that all simple loops of positive length in $\Rd$ are {\it elementary}, that is, they are made of arrows of the same type (and consequently with the same winner). For any such loop $\gamma$, there is a unique vertex $\pi$ such that $\gamma$ passes through $\pi$ but $\gamma^*(\pi)$ does not contain any arrow of $\gamma$. As it turns out, $\pi$ is the vertex of $\gamma$ such that $ |W_d(\pi)|$ is minimal. One has
$$|\gamma| + |W_d(\pi)| = d-1.$$

\subsection{The hyperelliptic Rauzy--Veech group}

Let $\gamma$ be an elementary simple loop in $\Rd$ and denote by $\pi$ the vertex of $\gamma$ with
$|W_d(\pi)|$ minimal (see above).
Let $\gamma'$ be the non-oriented loop based at $\pi^*(d)$ defined by $\gamma' = \gamma^*(\pi) * \gamma * (\gamma^*(\pi))^{-1}$.

\subsubsection{Calculation of some Kontsevich--Zorich matrices} Let us compute the matrix $B_{\gamma'}$ associated to $\gamma'$ by the Rauzy--Veech algorithm / Kontsevich--Zorich cocycle (see Subsection 7.5 in \cite{Y-Pisa} for definitions). Assume for instance that the loop $\gamma$ is of top type. We have $w_b(\pi) + w_t(\pi) + |\gamma| = d-1$.

On one hand, the winner of all the arrows of $\gamma$ is $d-1-2w_b(\pi)$. On the other hand, starting from $\pi$, the losers are successively  $1-d + 2w_t(\pi), 3-d +2w_t(\pi) , \ldots, d-3-2w_b(\pi)$. Therefore, by   writing $B_{\gamma}(v) = v'$, we have
$$v'_k= \left \{  \begin{array}{cc}
v_k + v_{d-1-2w_b(\pi)} & \text{if } 1-d + 2w_t(\pi) \leq k < d-1-2w_b(\pi),\\
v_k & \text{otherwise}
\end{array} \right.$$

If $W_d(\pi)$ is empty, i.e., $w_b(\pi)= w_t(\pi) =0$, we have $\gamma'=\gamma$ and $B_{\gamma'}=B_{\gamma}$. So, we can assume now that $W_d(\pi)$ is not empty. Let $w_1$ be the number of occurrences of $b$ at the end of $W_d(\pi)$; one has $0 < w_1 \leq w_b(\pi)$. Write $\gamma^*(\pi) = \gamma^1 * \gamma_1$ with $|\gamma_1| = w_1$.

The winner of all the arrows of $\gamma_1$ is $1-d+2w_t(\pi)$, while the losers are successively $d-1-2w_b(\pi)+2w_1,  \ldots, d-1-2w_b(\pi)+2w_1$. Thus, by writing $B_{\gamma_1}(v) = \wh v$, we have
$$\wh v_k= \left \{  \begin{array}{cc}
v_k + v_{1-d+2w_t(\pi)} & \text{if } d-1-2w_b(\pi) < k \leq  d-1-2w_b(\pi)+2w_1 ,\\
v_k & \text{otherwise}
\end{array} \right.$$

For $ B_{\gamma_1 * \gamma * \gamma_1^{-1}}(v)=v'$, we have therefore
$$v'_k= \left \{  \begin{array}{cc}
v_k + v_{d-1-2w_b(\pi)} & \text{if } 1-d + 2w_t(\pi) \leq k < d-1-2w_b(\pi),\\
v_k - v_{d-1-2w_b(\pi)} & \text{if } d-1-2w_b(\pi) < k \leq  d-1-2w_b(\pi)+2w_1 ,\\
v_k & \text{otherwise}
\end{array} \right.$$

Let $\gamma'_1:= \gamma_1 * \gamma * \gamma_1^{-1}$. If $\gamma^1$ is empty, i.e., $w_b(\pi)=w_1$, $w_t(\pi)=0$, we have $\gamma'= \gamma'_1$ and the computation of $B_{\gamma'}$ is complete. Otherwise, we go on by writing $\gamma^1= \gamma^2 * \gamma_2$, with $\gamma_2$ made of the arrows of top type ending $\gamma^1$. By writing $|\gamma_2| = w_2>0$ and $B_{\gamma_2}(v) = \wh v$, one has
$$\wh v_k= \left \{  \begin{array}{cc}
v_k + v_{d-1-2w_b(\pi)+2w_1} & \text{if }  1-d+2w_t(\pi) -2 w_2\leq k <  1-d+2w_t(\pi) ,\\
v_k & \text{otherwise}
\end{array} \right.$$

Let $\gamma'_2= \gamma_2 * \gamma'_1 * \gamma_2^{-1}$. For $B_{\gamma'_2}(v)=v'$, we have
$$v'_k= \left \{  \begin{array}{cc}
v_k + v_{d-1-2w_b(\pi)} & \text{if } 1-d + 2w_t(\pi)-2w_2 \leq k < d-1-2w_b(\pi),\\
v_k - v_{d-1-2w_b(\pi)} & \text{if } d-1-2w_b(\pi) < k \leq  d-1-2w_b(\pi)+2w_1 ,\\
v_k & \text{otherwise}
\end{array} \right.$$

We go on till $\gamma^m$ is empty. In this way, for $B_{\gamma'} (v)=v'$, one obtains:
$$v'_k= \left \{  \begin{array}{cc}
v_k + v_{d-1-2w_b(\pi)} & \text{if }  k < d-1-2w_b(\pi),\\
v_k - v_{d-1-2w_b(\pi)} & \text{if }  k >  d-1-2w_b(\pi),\\
v_k & \text{if } k =  d-1-2w_b(\pi).
\end{array} \right.$$
Note that this formula depends only on the type and the winner of $\gamma$.

Similarly, when $\gamma$ has bottom type, the formula for $B_{\gamma'} (v)=v'$ is
$$v'_k= \left \{  \begin{array}{cc}
v_k + v_{1-d+2w_t(\pi)} & \text{if }  k >1-d+2w_t(\pi),\\
v_k - v_{1-d+2w_t(\pi)} & \text{if }  k < 1-d+2w_t(\pi),\\
v_k & \text{if } k =  1-d+2w_t(\pi).
\end{array} \right.$$

\begin{remark}\label{r.monoid-group}
The matrix $B_{\gamma'}$ associated to a loop $\gamma$ of bottom type is the inverse of the matrix corresponding to the loop of top type and the same winner as $\gamma$.
\end{remark}

\begin{remark}\label{r.homology-homotopy} Actually, one can completely describe the action of $\gamma'$ at the homotopy level (instead of the homology level of the matrix $B_{\gamma'}$) and, again, it depends only on the type and winner of $\gamma$. We will come back to this point later in Section \ref{s.A'Campo} below.
\end{remark}

\subsubsection{Definition of the hyperelliptic Rauzy--Veech groups} Let $\gamma$ be the elementary simple loop in $\Rd$ of bottom type with winner $p\in\Ad$. Our previous discussion shows that the matrix $B_{\gamma'}$ corresponds to the operator $L_p $ on  $\Zset^{\Ad}$ given by
$$L_p(e_q) = \left \{ \begin{array}{cc}
e_q    & \text{if} \; q\ne p  \\
-\sum_{r<p} e_r + \sum_{r\geq p} e_r   &  \text{if} \; q=p
  \end{array} \right.$$
where $(e_p)$ is the canonical basis of $\Cset^{\Ad}$. Also, by Remark \ref{r.monoid-group}, the matrix  associated to the elementary simple loop in $\Rd$ of top type with winner $p\in\Ad$ corresponds to the inverse of the operator $L_p$.

\begin{definition}\label{def1}
The {\it hyperelliptic Rauzy--Veech group of complexity $d$} is the subgroup $\mathfrak G_d$ of $SL(\Zset^{\Ad})$ generated by the operators $L_p$, $p \in \Ad$.
\end{definition}

\subsubsection{Intersection form}
The antisymmetric matrix $\Omega = \Omega(d)$ with entries
$$ \Omega_{pq} := \left\{ \begin{array}{ccc}+1&\text{if}&p<q\\ -1&\text{if}&p>q\\ 0&\text{if}& p=q \end{array} \right.$$
indexed by $\Ad \times \Ad$ can be interpreted as the intersection form on the homology of certain translation surfaces (see Subsections 3.4 and 4.5 of \cite{Y-Pisa}).

The operators $L_p$ satisfy $ L_p \ \Omega \ ^t \negthinspace L_p = \Omega$ and, \emph{a fortiori},  the same is true of all the elements of $\Gd$:
\begin{equation}\label{e.symplecticity}
B \ \Omega \ ^t \negthinspace B = \Omega, \qquad \forall B \in \Gd.
\end{equation}

\subsubsection{Symplecticity for $d$ even}
The matrix $\Omega (d)$ corresponds to the intersection form on the absolute homology of certain translation surfaces when $d$ is even. In particular, $\Omega(d)$ is unimodular.
The symplectic form on $\mathbb{Z}^{\Ad}$ is defined by
 $$ (v,v') \mapsto ^ t \negthinspace v \  \Omega^{-1} v'.$$
The relation \eqref{e.symplecticity} shows that $\Gd \subset Sp(\Omega^{-1}(d), \Zset)$. Note that the group $Sp(\Omega^{-1}(d), \Zset)$ is isomorphic to $Sp(d,\Zset)$.

\subsubsection{The case of $d$ odd}
Assume that $d$ is odd.
\begin{lemma}\label{lem1}
The matrix $\Omega(d)$ has rank $d-1$. The image $\Omega(d)(\Zset^{\A_d})$ is the hyperplane
$$ H(d) = \left\{ v \in \Zad \; \Big\vert \; \sum_{p \in \Ad} (-1)^{p/2} v_p =0 \right\}.$$
The kernel of  $\Omega(d)$ is generated by $h^{\star} := \sum_{p\in \A_d}  (-1)^{p/2} e_p$
\end{lemma}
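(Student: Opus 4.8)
\emph{Proof plan.} Everything here is linear algebra over $\Zset$, and the plan is to first relabel $\Ad$ so that $\Omega(d)$ takes a completely transparent shape, then read the kernel off a telescoping identity and the image off differences of consecutive columns.

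First I would list $\Ad$ in decreasing order as $a_1>a_2>\dots>a_d$, so that $a_i=d+1-2i$; since $d$ is odd every $a_i$ is even, and $(-1)^{a_i/2}=(-1)^{(d+1)/2}(-1)^i$, so in these coordinates the vector $h^{\star}$ is, up to an overall sign, the alternating vector $v^0=(1,-1,1,\dots,1)\in\Zad$ (with $v^0_{a_1}=v^0_{a_d}=1$ because $d$ is odd). In the $a_i$-ordering the matrix $\Omega(d)$ becomes the matrix $N$ with $N_{ij}=+1$ for $i>j$, $N_{ij}=-1$ for $i<j$ and $N_{ii}=0$; it then suffices to prove the three assertions for $N$ and $v^0$.

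For the kernel and the rank I would use the identity $(Nv)_{i+1}-(Nv)_i=v_i+v_{i+1}$, valid for $1\leq i\leq d-1$, which is an immediate computation. It shows that $Nv=0$ forces $v_{i+1}=-v_i$ for all $i$, hence $\ker N\subseteq\Qset v^0$; conversely a one-line telescoping sum, using $\sum_{j=1}^d(-1)^{j-1}=1$ (this is where $d$ odd enters), gives $Nv^0=0$. Thus $\ker N=\Qset v^0$, and since $v^0$ is a primitive integer vector $\ker_\Zset\Omega(d)=\Zset h^{\star}$; by rank--nullity $\Omega(d)$ has rank $d-1$. For the image I would compute the columns $Ne_{a_j}=\sum_{i>j}e_{a_i}-\sum_{i<j}e_{a_i}$ and note that consecutive columns differ by $Ne_{a_j}-Ne_{a_{j-1}}=-(e_{a_{j-1}}+e_{a_j})$ for $2\leq j\leq d$, so $\Omega(d)(\Zad)$ contains the sublattice $L$ spanned by the $d-1$ vectors $g_j:=e_{a_{j-1}}+e_{a_j}$. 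Each $g_j$ is orthogonal to $h^{\star}$, so $L\subseteq H(d)$, and $\Omega(d)(\Zad)\subseteq H(d)$ as well, because every column of $\Omega(d)$ is orthogonal to $h^{\star}$ (equivalently $\Omega(d)h^{\star}=0$). To finish I would check that $L$ is saturated in $\Zad$: if $w=\sum_{j=2}^d c_jg_j$ lies in $\Zad$ with $c_j\in\Qset$, then reading off the coordinates $a_1,a_2,\dots,a_{d-1}$ in turn gives $c_2=w_{a_1}\in\Zset$, then $c_3=w_{a_2}-c_2\in\Zset$, and so on; a saturated full-rank sublattice of the rank-$(d-1)$ lattice $H(d)$ must be all of $H(d)$, whence $L=H(d)$ and $\Omega(d)(\Zad)=H(d)$.

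The routine part is trivial over $\Qset$ — for the alternating form $\Omega(d)$ one automatically has $\mathrm{Im}=(\ker)^{\perp}$ — so the only genuine point is the integral refinement: the primitivity of $v^0$ and, above all, the saturation of the lattice $L$ generated by the $g_j$. That triangular/echelon argument is the crux; the rest is bookkeeping produced by the relabelling.
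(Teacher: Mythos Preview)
Your proof is correct and follows essentially the same route as the paper: both identify the image generators via the relation $\Omega(d)(e_{p+2}-e_p)=e_p+e_{p+2}$ (your column differences $Ne_{a_j}-Ne_{a_{j-1}}=-(e_{a_{j-1}}+e_{a_j})$ are exactly this after the relabelling) and then note that these $d-1$ vectors form a $\Zset$-basis of $H(d)$, while $\Omega(d)h^{\star}=0$ handles the kernel. The paper simply asserts that $\{e_p+e_{p+2}\}$ is a basis of $H(d)$; your triangular saturation argument spells out this step explicitly, but otherwise the two proofs coincide.
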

\begin{proof}
The vectors $\Omega (d) e_p$ belong to $H(d)$. Moreover, one has
$$ \Omega(d) (e_{p+2} - e_p) = e_p + e_{p+2}, \qquad \forall \, p \in \A_d, \, p < d-1,$$
and $\{e_p + e_{p+2} : p \in \A_d, \, p < d-1\}$ form a basis of $H(d)$. Finally, it is clear that $\Omega(d).h^{\star} =0$.
\end{proof}

\begin{proposition}\label{prop1}
The matrices in $\G_d$ satisfy $ ^t h^{\star} B = ^t \negmedspace h^{\star}$
\end{proposition}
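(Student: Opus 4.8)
The plan is to reduce the assertion to the generators $L_p$ of $\G_d$ and then to a short arithmetic identity. The set of matrices $B$ satisfying ${}^t h^{\star} B = {}^t h^{\star}$ is closed under products and inverses, hence is a subgroup of $SL(\Zset^{\Ad})$; since $\G_d$ is generated by the operators $L_p$, $p \in \Ad$ (Definition \ref{def1}), it suffices to prove ${}^t h^{\star} L_p = {}^t h^{\star}$ for each $p$, and the relation then holds throughout $\G_d$ (in particular for the $L_p^{-1}$). Alternatively one could start from \eqref{e.symplecticity} and Lemma \ref{lem1}: for $B \in \G_d$ one has $\Omega(d)\,{}^t B\, h^{\star} = B^{-1}\Omega(d) h^{\star} = 0$, so ${}^t B h^{\star} \in \ker\Omega(d) = \Zset h^{\star}$; as $h^{\star}$ is a primitive integer vector and $B$ is invertible over $\Zset$, this forces ${}^t B h^{\star} = \pm h^{\star}$, with $B \mapsto \pm1$ a homomorphism $\G_d \to \{\pm1\}$, so that again one only needs to check the sign on the $L_p$.

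Because $L_p$ fixes $e_q$ for every $q \neq p$, the equality ${}^t h^{\star} L_p = {}^t h^{\star}$ is equivalent to the single scalar relation $\langle h^{\star}, L_p e_p - e_p\rangle = 0$, where $\langle\cdot,\cdot\rangle$ is the standard pairing. From the defining formula $L_p(e_p) = -\sum_{r<p} e_r + \sum_{r\geq p} e_r$ one gets $L_p e_p - e_p = -\sum_{r<p} e_r + \sum_{r>p} e_r$, so the required relation is
$$\sum_{\substack{r \in \Ad\\ r>p}} (-1)^{r/2} \;=\; \sum_{\substack{r \in \Ad\\ r<p}} (-1)^{r/2};$$
this makes sense since $d$ odd forces every element of $\Ad$ to be even. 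To prove it I would use the involution $\iota : r \mapsto -r$ of $\Ad$: it preserves the weights, since $(-1)^{-r/2} = (-1)^{r/2}$, and maps $\{r \in \Ad : r>p\}$ bijectively onto $\{r \in \Ad : r<-p\}$, whence the left-hand side equals $\sum_{r<-p}(-1)^{r/2}$. Now the index sets $\{r<-p\}$ and $\{r<p\}$ inside $\Ad$ differ, up to the sign of the contribution, exactly by the block $\{\,r \in \Ad : -|p| \leq r < |p|\,\} = \{-|p|,\,-|p|+2,\,\dots,\,|p|-2\}$, which consists of $|p|$ consecutive elements of $\Ad$; since $|p|$ is even, the substitution $r = 2i$ turns $\sum_{r}(-1)^{r/2}$ over this block into an alternating sum $\sum_i (-1)^i$ over an even number of consecutive integers, which vanishes. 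Hence $\sum_{r<-p}(-1)^{r/2} = \sum_{r<p}(-1)^{r/2}$, as needed.

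I do not foresee any genuine obstacle: the essential input is the already available fact that $\G_d$ is generated by the $L_p$'s, and what remains is the elementary identity above, whose only delicate point is the index bookkeeping in the last step — which one may instead dispatch by writing $d = 2m+1$, $p = 2j$ and checking $\sum_{i=j+1}^{m}(-1)^i = \sum_{i=-m}^{j-1}(-1)^i$ directly by splitting both ranges at $0$.
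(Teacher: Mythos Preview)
Your proof is correct and follows the same approach as the paper: reduce to the generators $L_p$ and verify the relation there. The paper's proof is the one-line assertion ``Indeed this is the case for each $L_p$, $p\in \A_d$,'' so you have simply supplied the explicit arithmetic verification that the paper leaves to the reader.
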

\begin{proof}
Indeed this is the case for each $L_p$, $p\in \A_d$.
\end{proof}

The symplectic form induced by $\Omega(d)$ on $H(d)$ is defined as follows: for $v,v' \in H(d)$,  $v = \Omega w, v' = \Omega w'$, we set
$$ (v,v') \mapsto ^t \negthinspace w\ \Omega\ w' .$$

Observe that this does not depend on the choices of $w,w'$.

\begin{remark}\label{rem1}
This is coherent with the definition of the symplectic form for $d$ even.
\end{remark}

From \eqref{e.symplecticity} (or Proposition \ref{prop1}), the elements of $\Gd$ preserve the hyperplane $H(d)$ and their restrictions to $H(d)$ are symplectic with respect to the symplectic form on $H(d)$.

We denote still by $Sp(\Omega^{-1}(d),\Zset)$ the group of operators in $SL(\Zad)$ satisfying \eqref{e.symplecticity} (although $\Omega$ is not invertible in this case).

\subsubsection{Reduction modulo $2$}

For $p \in \Ad$, let $\bar L_p$ be the reduction mod.$2$ of $L_p$: it acts on $(\Zset/2)^{\Ad}$. Denote by $(\bar e_p)_{p\in \Ad}$ the canonical basis of $(\Zset/2)^{\Ad}$ and define $\bar e^{\star} := \sum_{p\in \Ad} \bar e_p$.

\begin{proposition}\label{prop2}
For any $q \in \A_d$, the $d+1$ vectors $e^{\star}$, $e_p$, $p \in \Ad$, are permuted by $\bar L_q$. More precisely, $\bar L_q$ fixes $\bar e_p$ for $p\ne q$ and exchanges $\bar e_q$ and $\bar e^{\star}$.
\end{proposition}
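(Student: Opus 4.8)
The plan is to read off the action of $\bar L_q$ on each of the $d+1$ vectors directly from Definition~\ref{def1}, using only that $-1\equiv 1\pmod 2$. There is essentially no obstacle here: the statement is a one-line reduction mod $2$ of the formula for $L_q$, together with a short linearity argument for the one half that is not completely immediate.

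First I would dispose of the vectors $\bar e_p$ with $p\ne q$: by definition $L_q(e_q)=e_q$ for $q\neq p$, i.e. $L_q$ fixes every $e_p$ with $p\ne q$, hence so does $\bar L_q$. Next I would compute $\bar L_q(\bar e_q)$. The defining formula gives $L_q(e_q)=-\sum_{r<q}e_r+\sum_{r\ge q}e_r$; reducing mod $2$, every sign becomes $+1$ and every index of $\Ad$ occurs exactly once, so $\bar L_q(\bar e_q)=\sum_{r\in\Ad}\bar e_r=\bar e^{\star}$. Thus $\bar L_q$ sends $\bar e_q$ to $\bar e^{\star}$ and fixes the remaining $d-1$ basis vectors.

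Finally I would determine $\bar L_q(\bar e^{\star})$ by linearity: $\bar L_q(\bar e^{\star})=\sum_{p\in\Ad}\bar L_q(\bar e_p)=\bar L_q(\bar e_q)+\sum_{p\ne q}\bar e_p=\bar e^{\star}+\bigl(\bar e^{\star}-\bar e_q\bigr)=2\bar e^{\star}-\bar e_q\equiv\bar e_q\pmod 2$. Hence $\bar L_q$ interchanges $\bar e_q$ and $\bar e^{\star}$ and fixes each $\bar e_p$ with $p\ne q$, so it induces the transposition $(\bar e_q\ \bar e^{\star})$ on the distinguished set of $d+1$ vectors $\{\bar e^{\star}\}\cup\{\bar e_p:p\in\Ad\}$, which is exactly the claim. (Equivalently, once one knows $\bar L_q$ fixes $d-1$ basis vectors and sends $\bar e_q\mapsto\bar e^{\star}$, the identity $\bar L_q^2=\mathrm{id}$ follows and forces $\bar L_q(\bar e^{\star})=\bar e_q$.)

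The only point deserving a remark is that $\bar e^{\star}$ is not itself a basis vector but the sum of all of them, so ``permutation of $d+1$ vectors'' should be read as a bijection of this particular finite set rather than as a statement about a basis; the computation above verifies that $\bar L_q$ does restrict to such a bijection. I expect no genuine difficulty in carrying this out.
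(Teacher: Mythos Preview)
Your argument is correct and is exactly the explicit unpacking of what the paper records as ``This follows easily from the definitions.'' One small slip to fix: in the first step you wrote ``by definition $L_q(e_q)=e_q$ for $q\neq p$,'' but you mean $L_q(e_p)=e_p$ for $p\neq q$; the surrounding sentence already says this correctly, so it is just a typo.
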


\begin{proof}
This follows easily from the definitions.
\end{proof}

\begin{corollary}\label{cor1}
The image of $\Gd$ in $SL(\Fset_2^{\A_d})$ is the subgroup $\H_d$ formed of elements preserving
$\mathcal E := \{\bar e^{\star}\}\cup\{\bar e_p\}_{p \in \Ad}$. It is isomorphic to the symmetric group of order $d+1$.
\end{corollary}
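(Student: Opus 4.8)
The plan is to identify $\H_d$ with the full symmetric group on the $(d+1)$-element set $\mathcal E$, and then to recognise the image of $\Gd$ inside it by exhibiting a generating set of transpositions. To begin, Proposition~\ref{prop2} tells us that each generator $\bar L_q$ of the image of $\Gd$ in $SL(\Fset_2^{\Ad})$ permutes $\mathcal E$; hence this image is contained in the subgroup $\H_d$ of linear maps preserving $\mathcal E$ setwise, and restriction to $\mathcal E$ defines a homomorphism $\rho\colon \H_d \to \mathrm{Sym}(\mathcal E)$, where $\mathrm{Sym}(\mathcal E)\cong S_{d+1}$. (Over $\Fset_2$ one has $SL=GL$, so nothing is lost by working inside $SL$.)

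The core of the argument is the linear algebra of $\mathcal E$. Since $(\bar e_p)_{p\in\Ad}$ is the canonical basis and $\bar e^{\star}=\sum_{p\in\Ad}\bar e_p$, the set $\mathcal E$ spans $\Fset_2^{\Ad}$, and its $d+1$ elements satisfy exactly one $\Fset_2$-linear relation, the \emph{symmetric} one $\sum_{v\in\mathcal E}v=0$. Three consequences follow: (i) every $d$-element subset of $\mathcal E$ is a basis of $\Fset_2^{\Ad}$; (ii) $\rho$ is injective, since an element of $\H_d$ inducing the identity permutation of $\mathcal E$ fixes each $\bar e_p$ and is therefore the identity map; (iii) $\rho$ is surjective, because given $\sigma\in\mathrm{Sym}(\mathcal E)$ the assignment $\bar e_p\mapsto\sigma(\bar e_p)$ extends, by (i), to a linear automorphism $B_\sigma$, and then $B_\sigma\bar e^{\star}=\sum_p\sigma(\bar e_p)=\sigma(\bar e^{\star})$ because $\sigma$ carries the relation $\sum_{v\in\mathcal E}v=0$ to itself, so $B_\sigma\in\H_d$ and $\rho(B_\sigma)=\sigma$. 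Hence $\rho$ is an isomorphism $\H_d\cong\mathrm{Sym}(\mathcal E)\cong S_{d+1}$, which is the second assertion of the corollary.

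It remains to pin down the image of $\Gd$. By Proposition~\ref{prop2}, $\rho(\bar L_q)$ is the transposition of $\mathcal E$ exchanging $\bar e_q$ with $\bar e^{\star}$, for each $q\in\Ad$. These are $d$ transpositions sharing the common point $\bar e^{\star}$, and it is a standard fact that the transpositions through a fixed point of a $(d+1)$-element set generate its full symmetric group; thus the $\rho(\bar L_q)$ generate $\mathrm{Sym}(\mathcal E)$. Since the image of $\Gd$ in $SL(\Fset_2^{\Ad})$ is generated by the $\bar L_q$, it follows that this image is all of $\H_d$, completing the proof.

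I expect the only point that needs a little care to be consequence (iii), namely that an arbitrary permutation of $\mathcal E$ genuinely extends to a linear automorphism; but this reduces immediately to the invariance of the unique linear relation $\sum_{v\in\mathcal E}v=0$ under $\mathrm{Sym}(\mathcal E)$, so I do not anticipate any real obstacle — the remainder is bookkeeping with generating transpositions.
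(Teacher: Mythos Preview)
Your proof is correct and follows essentially the same approach as the paper: identify $\H_d$ with $\mathrm{Sym}(\mathcal E)$ via restriction, and then use Proposition~\ref{prop2} together with the fact that the transpositions through a fixed point generate the full symmetric group. The paper's proof is a single sentence invoking exactly this generation fact; your version spells out the details (notably the injectivity of $\rho$ via the fact that $\mathcal E$ contains a basis), and your separate verification of surjectivity of $\rho$ via the construction $B_\sigma$ is fine but not strictly needed, since once $\rho$ is injective and the image of $\Gd$ already hits all of $S_{d+1}$, the sandwich $\mathrm{image}(\Gd)\subset\H_d\hookrightarrow S_{d+1}$ forces equality throughout.
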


\begin{proof}
Indeed, this is a direct consequence of Proposition \ref{prop2} because the group generated by the transpositions $(0,i)$, $1 \leq i \leq d$ is the full symmetric group of $\{0,\ldots,d\}$.
\end{proof}

\subsection{Statement of the main result}\label{ssStat}

The following statement provides a precise version for Theorem \ref{t.AMY-intro} above.
\begin{theorem}\label{thm1}
For any integer $d \geq 2$, the group $\G_d$ consists of matrices $B \in Sp(\Omega^{-1}(d),\Zset)$ whose image in $SL(\mathbb{F}_2^{\A_d})$ belongs to $\H_d$.
\end{theorem}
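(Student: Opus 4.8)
The plan is to prove Theorem \ref{thm1} by induction on $d$. One inclusion is already established: by \eqref{e.symplecticity} every element of $\G_d$ lies in $Sp(\Omega^{-1}(d),\Zset)$, and by Corollary \ref{cor1} its reduction mod $2$ lies in $\H_d$. So the content is the reverse inclusion: every $B \in Sp(\Omega^{-1}(d),\Zset)$ whose mod-$2$ image preserves $\mathcal E$ actually lies in $\G_d$. Call the group of all such matrices $\wt\G_d$; we must show $\wt\G_d \subseteq \G_d$. The base case $d=2$ is a small explicit check (here $Sp(\Omega^{-1}(2),\Zset) \cong Sp(2,\Zset)=SL(2,\Zset)$ and the mod-$2$ condition cuts out an index-$6$ subgroup, which one verifies is generated by $L_{1}$ and $L_{-1}$); $d=3$ may need to be treated separately as well, since the rank of $\Omega$ drops when $d$ is odd.

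The inductive mechanism is the relation between $\G_{d}$ and $\G_{d+1}$ coming from the combinatorial structure of the Rauzy diagram in Subsection \ref{ss.hypRauzy}. Concretely, one embeds $\Zset^{\A_d}$ into $\Zset^{\A_{d+1}}$ in a way compatible with how the central vertex $\pi^*(d)$ sits inside $\R_{d+1}$, and shows that (i) the generators $L_p$ of $\G_{d+1}$ for $p$ ranging over $\A_{d+1}\setminus\{\pm d\}$ restrict to the generators of $\G_d$ on this subspace, while (ii) the two extra generators $L_{d}$ and $L_{-d}$, together with $\G_d$, suffice to build all of $\wt\G_{d+1}$. Step (ii) is where the symplectic geometry enters: given $B \in \wt\G_{d+1}$, one wants to multiply $B$ on the left and right by explicit elements of the subgroup $\langle \G_d, L_{\pm d}\rangle$ so as to bring $B$ into a normal form where it visibly stabilizes the subspace $\Zset^{\A_d}$ and acts there as an element of $\wt\G_d$; invoking the inductive hypothesis then finishes. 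To carry this out one needs that $\G_{d+1}$ (or at least $\wt\G_{d+1}$) acts transitively enough on suitable sets of symplectic/primitive vectors — this is the usual ``Eichler–Siegel'' style argument for generation of $Sp$ by transvections, adapted to respect the mod-$2$ constraint. The operators $L_p$ are (up to the sign twist on one side) symplectic transvections, so one expects that a generic element can be cleared out column by column using products of $L_p$'s; the mod-$2$ bookkeeping via Proposition \ref{prop2} and Corollary \ref{cor1} ensures that the finite-index obstruction is exactly $\H_d$ and nothing smaller.

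The main obstacle I anticipate is step (ii): controlling the interaction between the arithmetic of the symplectic group and the mod-$2$ reduction simultaneously. Transvection-generation arguments for $Sp(2g,\Zset)$ are standard, but here we are not free to use arbitrary transvections — only the specific $L_p$ and their products — and we must stay inside $\wt\G_{d+1}$ throughout, so the reduction steps have to be chosen to be ``invisible'' mod $2$ or to realize exactly the needed permutation in $\H_{d}$. Making the column-clearing procedure terminate while respecting both constraints, and correctly handling the degenerate case $d$ odd (where one works on the hyperplane $H(d)$ and must keep track of the extra invariant $h^\star$ from Proposition \ref{prop1}), is the crux; the rest is bookkeeping with the explicit formulas for $L_p$ and the maps $j_t, j_b, R_t, R_b$ from Subsection \ref{ss.hypRauzy}.
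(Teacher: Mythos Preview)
Your overall architecture---induction on $d$, reduce to a stabilizer of a basis vector, identify that stabilizer with the lower-dimensional group, and close the loop via a transitivity statement on primitive vectors---is exactly the paper's approach. But several details are off, and one structural point is missing.

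First, the base case: for $d=2$ the set $\mathcal E$ has three elements and $\H_2$ is all of $SL(2,\Fset_2)$ (order $6$), so the mod-$2$ condition is \emph{vacuous}; $\G_2 = SL(2,\Zset)$ is the full group, not an index-$6$ subgroup. Second, your indexing is garbled: $|\A_{d+1}| = d+1$ and $|\A_d| = d$, so going up by one step adds \emph{one} generator, not two; the paper fixes the single vector $e_{1-d}$ and shows its stabilizer in $\G'_d$ is generated by the $L_p$ with $p \neq 1-d$, which (after a shift of indices by $1$) matches $\G_{d-1}$. Your ``restrict to a subspace'' picture is also not quite right: the stabilizer is block upper-triangular, and the map to $\G'_{d-1}$ is a quotient (onto the diagonal block), not a restriction.

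The more substantive omission is that the transitivity you need is itself a theorem to be proved by induction simultaneously with Theorem~\ref{thm1}. The paper packages this as Theorems~\ref{thm2} and~\ref{thm3} (orbit descriptions for $d$ even and odd respectively), and the induction is: Theorem~\ref{thm1} for $d-1$ $\Rightarrow$ stabilizer of $e_{1-d}$ in $\G_d$ is as claimed $\Rightarrow$ orbit theorem for $d$ $\Rightarrow$ Theorem~\ref{thm1} for $d$. Your proposal gestures at ``transitively enough'' but does not identify this as a separate inductive assertion. Finally, for even $d$ the stabilizer map to $\G'_{d-1}$ has a nontrivial kernel (generated by the matrix with $2\,{}^t h^\star$ in the off-diagonal block), and showing this kernel lies in the subgroup generated by the $L_p$, $p \neq 1-d$, requires an explicit and non-obvious product computation (the paper's Proposition~\ref{prop8}); this is precisely the ``main obstacle'' you anticipate, and it does not fall out of generic transvection arguments.
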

Here, we recall that the group $Sp(\Omega^{-1}(d),\Zset)$ was defined using a special convention when $d$ is odd (see the two paragraphs after Remark \ref{rem1} above). 

In the sequel, we will give two proofs of this result in Sections \ref{s.AMYhyp1} and \ref{s.A'Campo}.

More precisely, our discussion in Section \ref{s.AMYhyp1} below will establish (by induction) this theorem at the same time of the next two results.

\begin{theorem}\label{thm2}
For any even integer $d \geq 2$ and  any $p \in \A_d$, the orbit of $e_p$ under $\G_d$ is equal to the set of primitive vectors in $\Zset^{\A_d}$ which are congruent mod.$2$ to a vector in the set $\mathcal E$ from  Corollary \ref{cor1}.
\end{theorem}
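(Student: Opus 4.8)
The plan is to prove Theorem~\ref{thm2} by induction on the even integer $d$, running the argument simultaneously with the inductive proof of Theorem~\ref{thm1}, and exploiting the maps $j_t,j_b$ that relate $\R_{d-1}$ (or rather $\R_{d-2}$, to stay within the even-complexity case when needed) to $\R_d$. First I would settle the base case $d=2$ by hand: here $\Ad=\{1,-1\}$, $\G_2$ is generated by $L_1,L_{-1}$, and one checks directly that the orbit of $e_1$ (resp. $e_{-1}$) under $\G_2\cong SL(2,\Zset)$ is exactly the set of primitive vectors of $\Zset^2$ with the prescribed parity, using the classical transitivity of $SL(2,\Zset)$ on primitive vectors together with the parity bookkeeping supplied by Corollary~\ref{cor1}.

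For the inductive step the two inclusions go in opposite directions and require different tools. The inclusion ``orbit $\subseteq$ parity-constrained primitive vectors'' is the easy half: each generator $L_p$ lies in $\G_d\subset Sp(\Omega^{-1}(d),\Zset)$, so it preserves primitivity (being in $SL$), and by Proposition~\ref{prop2} the reduction $\bar L_p$ merely permutes $\mathcal E$; hence the $\bmod\,2$ class of any $\G_d$-translate of $e_p$ stays inside $\mathcal E$. The substantive half is surjectivity: given a primitive $v\in\Zset^{\Ad}$ whose reduction mod~$2$ lies in $\mathcal E$, I must produce $B\in\G_d$ with $Be_p=v$. The strategy is a Euclidean-algorithm-type reduction: using the explicit action of the $L_q^{\pm1}$ (which add or subtract a fixed coordinate $v_q$ to all coordinates on one side of $q$), together with the symplectic relations, one decreases a suitable norm of $v$ — say $\sum_p |v_p|$ or the max coordinate — until $v$ is brought into a short list of ``small'' vectors, and then one checks by a direct finite computation that every small vector of the right parity is in the orbit of $e_p$. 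The role of Theorem~\ref{thm1} here is crucial: once we know $\G_d$ is \emph{all} of the congruence-type subgroup of $Sp(\Omega^{-1}(d),\Zset)$, transitivity statements about $\G_d$ can be imported from the classical transitivity of $Sp(2g,\Zset)$ (and its principal congruence subgroups) on primitive vectors with prescribed residue, which is a standard fact; the parity condition is exactly what cuts the full primitive set down to a single $\G_d$-orbit.

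Concretely, I would first prove Theorem~\ref{thm1} for $d$ by the induction of Section~\ref{s.AMYhyp1}, and \emph{then} deduce Theorem~\ref{thm2} as follows. Let $\Gamma(2)\subset Sp(\Omega^{-1}(d),\Zset)$ be the principal congruence subgroup of level $2$; it is classical that $\Gamma(2)$ acts transitively on primitive vectors in a fixed residue class mod~$2$ (this is the symplectic analogue of the elementary statement for $SL$, proved by the same elementary-matrix manipulations, which in our coordinates are realized by the operators $L_q^2$ and their conjugates — note $\bar L_q^2=\mathrm{id}$, so $L_q^2\in\Gamma(2)$). Since $\G_d\supseteq\Gamma(2)$ by Theorem~\ref{thm1} (every element of $\Gamma(2)$ has trivial, hence $\mathcal E$-preserving, image in $SL(\Fset_2^{\Ad})$), the $\G_d$-orbit of $e_p$ contains the full $\Gamma(2)$-orbit, i.e. all primitive vectors $\equiv e_p\ (\mathrm{mod}\ 2)$. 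To reach the other residue classes in $\mathcal E$, observe that $\H_d$ acts transitively on $\mathcal E=\{\bar e^\star\}\cup\{\bar e_q\}_{q\in\Ad}$ (it is the full symmetric group on $d+1$ symbols by Corollary~\ref{cor1}), so for each target class there is $B_0\in\G_d$ carrying $e_p$ to a vector in that class, and then $\G_d$-translation by $\Gamma(2)$ sweeps out all primitive vectors in it. Combining, the $\G_d$-orbit of $e_p$ is exactly the set of primitive vectors congruent mod~$2$ to an element of $\mathcal E$, which is the assertion.

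The main obstacle I anticipate is \emph{not} the orbit count itself but the logical scaffolding: Theorems~\ref{thm1} and~\ref{thm2} must be woven into a single induction (as the text explicitly says they are proved ``at the same time''), so one cannot simply cite Theorem~\ref{thm1} for the current $d$ without care. If the induction is set up so that Theorem~\ref{thm1} for $d$ is established before Theorem~\ref{thm2} for $d$ within the same inductive layer, the argument above is clean; otherwise one must instead argue transitivity directly from the generators $L_q$ via an explicit reduction algorithm on $\Zset^{\Ad}$, and the delicate point there is handling the boundary of the Euclidean descent — ensuring that the parity constraint is exactly preserved at each step and that the finite set of irreducible vectors one lands on is genuinely a single orbit. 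A secondary technical point is verifying the claimed transitivity of $\Gamma(2)$ on fixed-residue primitive vectors in the possibly-degenerate convention for $Sp(\Omega^{-1}(d),\Zset)$ when $d$ is odd — but since Theorem~\ref{thm2} is only asserted for $d$ even, $\Omega(d)$ is unimodular and this subtlety does not arise.
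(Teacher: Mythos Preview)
Your main line of argument has a genuine circularity. You propose to establish Theorem~\ref{thm1} for $d$ ``by the induction of Section~\ref{s.AMYhyp1}'' and \emph{then} deduce Theorem~\ref{thm2} for $d$ from it via $\Gamma(2)$-transitivity. But in that induction the implication runs the other way. For even $d$, the paper assumes Theorems~\ref{thm1} and~\ref{thm3} for $d-1$, first proves the orbit statement (Proposition~\ref{prop9}, which \emph{is} Theorem~\ref{thm2} for $d$), and only afterwards obtains Theorem~\ref{thm1} for $d$: given $B\in\G'_d$, one uses the orbit result to find $B_0\in\G_d$ with $B_0^{-1}B\cdot e_{1-d}=e_{1-d}$, so that $B_0^{-1}B$ lies in the stabilizer $\K_d$, which Proposition~\ref{prop8} shows is contained in $\G_d$. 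Thus Theorem~\ref{thm2} for $d$ is an \emph{input} to Theorem~\ref{thm1} for $d$, not a corollary; the hypothesis you flag (``if the induction is set up so that Theorem~\ref{thm1} for $d$ is established before Theorem~\ref{thm2}'') is exactly what fails.

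Your fallback --- a direct Euclidean descent using the $L_q$ --- is in spirit what the paper does, but your sketch is missing its two load-bearing pieces. First, the stabilizer of $e_{1-d}$ in $\G_d$ must be identified with the subgroup generated by the $L_p$ for $p\neq 1-d$ (Proposition~\ref{prop8}); in the even case this requires exhibiting the kernel element $\left(\begin{smallmatrix}1 & 2\,{}^t h^\star\\ 0 & \mathbf{1}_{d-1}\end{smallmatrix}\right)$ as an explicit word in those generators, a nontrivial matrix computation. Second, the descent is not a bare reduction of $\sum_p|v_p|$: one uses the subgroups $\S_{1-d,p}\cong SL(2,\Zset)$ of Lemma~\ref{lem2} to force $v_{1-d}=1$ (Lemma~\ref{lem5}), then powers of $L_{1-d}$ to normalize the linear functional $\phi(v)=\sum_{p>1-d}(-1)^{(d-1-p)/2}v_p$ to $1$ (Lemma~\ref{lem4}), and finally invokes Theorem~\ref{thm3} for the \emph{odd} integer $d-1$ --- with its affine $h^\star$-hyperplane constraint --- to finish (Lemma~\ref{lem3}). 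Your outline does not engage with the odd-case hyperplane at all, and the remark about passing to $\R_{d-2}$ ``to stay within the even-complexity case'' suggests you have not seen that the induction necessarily alternates parities.

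That said, your $\Gamma(2)$-transitivity argument is correct as a deduction and would give a clean proof of Theorem~\ref{thm2} if you imported Theorem~\ref{thm1} from the independent A'Campo route of Section~\ref{s.A'Campo}. As a component of the Section~\ref{s.AMYhyp1} induction, however, it cannot stand.
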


\begin{theorem}\label{thm3}
For any odd integer $d \geq 3$, any $p \in \A_d$, the orbit of $e_p$ under $\G_d$ is equal to the set of primitive vectors in $\Zset^{\A_d}$ which are congruent mod.$2$ to a vector in  $\mathcal E$ and belong to the affine hyperplane
$$ \{v \in \Zset^{\A_d} \vert ^t h^{\star}.(v-e_p) =0\} .$$
\end{theorem}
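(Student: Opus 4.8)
The inclusion $\Gd \subseteq \{B \in Sp(\Omega^{-1}(d),\Zset) : \bar B \in \H_d\}$ is already established: symplecticity is \eqref{e.symplecticity}, and the condition on the reduction mod $2$ is Corollary \ref{cor1}. So the whole content is the reverse inclusion, and the plan is to prove Theorems \ref{thm1}, \ref{thm2} and \ref{thm3} simultaneously by induction on $d$, the base case $d=2$ being trivial (here $Sp(\Omega^{-1}(2),\Zset)=SL(2,\Zset)$ and $\H_2$ is the full symmetric group on $3$ letters, and a direct check shows $L_{1}$, $L_{-1}$ generate $SL(2,\Zset)$). For the inductive step, I would first prove the orbit statements (Theorems \ref{thm2} and \ref{thm3}): fix a target primitive vector $v$ with $\bar v \in \mathcal E$ (and, for $d$ odd, lying on the prescribed affine hyperplane), and show one can move $e_p$ to $v$ using the generators $L_q^{\pm 1}$. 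The natural way is a Euclidean-type descent on $\sum_q |v_q|$ (or on $\max_q |v_q|$): each $L_q^{\pm 1}$ adds $\pm v_{q}$ (times a $\pm 1$ pattern dictated by the sign conventions in Definition \ref{def1}) to a block of coordinates, so one looks for a $q$ and a sign making the size strictly decrease; one has to check that the mod $2$ constraint (and, for $d$ odd, the $^th^{\star}$ constraint, which is preserved by Proposition \ref{prop1}) exactly accounts for the obstructions, so that descent only terminates at a $\Gd$-translate of some $e_{p'}$, and finally that all the $e_{p'}$ with the correct mod $2$ class lie in a single $\Gd$-orbit (they are permuted up to sign using short loops, e.g. products like $L_{q}L_{q'}^{-1}$).

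With the orbit statements in hand I would deduce Theorem \ref{thm1} by the standard "transitive on a stabilizer-generated set plus induction on a smaller symplectic group" argument. Concretely, take $B$ in the target group; by Theorem \ref{thm2}/\ref{thm3} there is $g \in \Gd$ with $gB$ fixing $e_{d-1}$ (say); then $gB$ preserves the $\Omega$-orthogonal complement of $e_{d-1}$ and acts on it; identifying this complement with the analogous lattice for the alphabet $\A_{d-1}$ — this is exactly where Rauzy's inductive description of the diagram, via the maps $j_t,j_b$, $R_t,R_b$ and the relation $R_t\circ j_t\circ R_t^{-1}(\pi^*(d))=\pi^*(d+1)$ recalled in Section \ref{s.hypRV}, comes in, since it tells us which sub-loops of $\Dd$ realize the generators $L_p$ of $\G_{d-1}$ — one sees that the stabilizer of $e_{d-1}$ in $\Gd$ maps onto (a conjugate of) $\G_{d-1}$. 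The induction hypothesis identifies $\G_{d-1}$ with the corresponding symplectic-mod-$2$ group, and a dimension/parity bookkeeping (being slightly careful about the odd/even alternation, since the rank of $\Omega$ and the precise meaning of $Sp(\Omega^{-1},\Zset)$ change, cf. Lemma \ref{lem1} and Remark \ref{rem1}) shows that $gB$, hence $B$, lies in $\Gd$.

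The main obstacle, I expect, is making the descent in the orbit statements genuinely work over $\Zset$ rather than merely over $\Qset$: the operators $L_p$ act on large blocks of coordinates at once, so a naive "reduce the largest entry" step need not be available, and one must exploit the block structure together with the mod $2$ membership in $\mathcal E$ to guarantee a strictly reducing move exists unless $v$ is already a $\Gd$-translate of a basis vector. A secondary subtlety is the bookkeeping in the inductive step when passing between $d$ even and $d$ odd: one must verify that the stabilizer of $e_{d-1}$ really surjects onto the full target group for $\A_{d-1}$ (no index drop), which again reduces to checking that enough of the generators $L_p$, conjugated appropriately, survive in the stabilizer — a computation that Rauzy's explicit formulas for $\Dd$ make tractable but which requires care.
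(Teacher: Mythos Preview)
Your inductive framework and the stabilizer-plus-transitivity endgame are exactly what the paper does, but the paper sidesteps precisely the obstacle you flag as the main one. You propose a direct Euclidean descent on $\sum_q|v_q|$ using the $L_q^{\pm1}$; as you note, each $L_q$ modifies an entire block of coordinates at once, and it is not at all clear such a descent terminates. The paper never attempts this. Instead it isolates the subgroups $\S_{p,q}=\langle L_p,L_q\rangle$ and proves (Lemma~\ref{lem2}) that $B\mapsto\begin{pmatrix}B_{pp}&B_{pq}\\B_{qp}&B_{qq}\end{pmatrix}$ is an isomorphism $\S_{p,q}\to SL(2,\Zset)$, with all other entries of $B$ determined by explicit affine formulas. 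This gives a genuine two-variable Euclidean algorithm on any chosen pair of coordinates, and that is all the descent the paper needs: one uses an $\S_{1-d,p}$ to force the $(1-d)$-coordinate of $v$ to be $0$ (odd case) or $1$ (even case), and then the remaining $d-1$ coordinates are handled not by further descent but by the inductive hypothesis (Theorem~\ref{thm2} or~\ref{thm3} for $d-1$), lifted back up through the stabilizer $\K_d$.

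This also reverses your order of operations. You want to prove the orbit statement first, independently, and then deduce Theorem~\ref{thm1} via the stabilizer. The paper does the opposite within each inductive step: it first identifies the stabilizer of $e_{1-d}$ in $\G_d$ with $\K_d$ (Propositions~\ref{prop6},~\ref{prop8}), and this identification already \emph{uses} Theorem~\ref{thm1} for $d-1$; only then does it prove the orbit statement, because the proof needs to know that elements of $\K_d$ actually lie in $\G_d$. In the even case there is an additional wrinkle you do not mention: the map $\varphi_d:\K_d\to\G'_{d-1}$ has a kernel (Proposition~\ref{prop5}), and one must exhibit a specific element of $\G_d$ realizing the generator $\begin{pmatrix}1&2\,{}^th^\star\\0&\mathbf{1}\end{pmatrix}$ of that kernel; the paper does this by an explicit word in the $L_p$ (Proposition~\ref{prop8}). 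Your plan would need some substitute for both of these ingredients if you insist on proving the orbit statement before touching the stabilizer.
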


On the other hand, our discussion in Section \ref{s.A'Campo} below will establish Theorem \ref{thm1} by expanding on Remark \ref{r.homology-homotopy} above, that is, we will use the relationship between hyperelliptic Rauzy--Veech groups and certain monodromy representations of braid groups in order to reduce Theorem \ref{thm1} to some results of A'Campo \cite{A'Campo}.

\section{Proof of Theorem \ref{thm1}}\label{s.AMYhyp1}

In this section, we prove Theorems \ref{thm1}, \ref{thm2} and \ref{thm3} by induction on the integer $d \geq 2$. In the initial case $d=2$, it is well known that the group generated by $L_{-1}$ and $L_1$ is equal to $SL(\Zset^{\A_2})$.
Observe that $\H_2$ is equal to $SL(\Fset_2^{\A_2})$ and $Sp(\Omega^{-1}(2),\Zset)$ is equal to $SL(\Zset^{\A_2})$. Therefore Theorem \ref{thm1} holds for $d=2$.
Any primitive vector in $\Zset^{\A_2}$ belongs to the orbit of $e_1$ (or $e_{-1}$) under
$SL(\Zset^{\A_2})$. Therefore Theorem \ref{thm2} also holds for $d=2$.

In the sequel, we denote by $\G'_d$ the group of matrices $B \in Sp(\Omega^{-1}(d),\Zset)$ whose image in $SL(\Fset_2^{\A_d})$ belongs to $\H_d$. By Proposition \ref{prop2} and relation \eqref{e.symplecticity}, the group $\G_d$ is contained in $\G'_d$. In this setting, our task of showing Theorem \ref{thm1} consists in proving that $\G_d$ is equal to $\G'_d$.

\subsection{Stabilizer of $e_{1-d}$ in $\G'_d$}

A matrix $B$ belonging to the stabilizer $\K_d$ of $e_{1-d}$ in $\G'_d$ can be written in the block form
\begin{equation}\label{eq1}
B:= \left(\begin{array}{cc} 1&v\\ 0&g\end{array} \right).
\end{equation}

Here, $v$ is a  integral line vector of dimension $d-1$ and $g$ is an unimodular square matrix of dimension $d-1$. Both are indexed by $\A_d \setminus \{1-d\}$, which is equal to $\A_{d-1}$ shifted by $1$. When considering the stabilizer $\K_d$, we will forget the shift and think of $v,g$ as indexed by $\A_{d-1}$.

Let $e^{\sharp}:= \sum_{p \in \A_d\setminus \{1-d\}} e_p$. By writing
$$ \Omega(d) = \left( \begin{array}{cc} 0 & ^t e^{\sharp} \\ -e^{\sharp} &\Omega(d-1) \end{array} \right),$$
the relation \eqref{e.symplecticity} is equivalent to
\begin{equation}\label{eq2}
\left\{ \begin{array} {rcl} g\ \Omega(d-1)\ ^t \negthinspace g &=&\Omega(d-1) \\ \Omega(d-1)\ ^t  v &=& e^{\sharp} - g^{-1} e^{\sharp} \end{array} \right.
\end{equation}

Here, the first relation means that $g \in Sp(\Omega^{-1}(d-1),\Zset)$. The map $B \mapsto g$ defines a homomorphism $\varphi_d$ from $\K_d$ to $Sp(\Omega^{-1}(d-1),\Zset)$.

\begin{proposition}\label{prop3}
The image of this homomorphism is equal to $\G'_{d-1}$.
\end{proposition}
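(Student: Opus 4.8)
The plan is to prove the two inclusions separately. The inclusion $\varphi_d(\K_d) \subseteq \G'_{d-1}$ is essentially bookkeeping: given $B \in \K_d$ with associated matrix $g$, the first relation in \eqref{eq2} already shows $g \in Sp(\Omega^{-1}(d-1),\Zset)$; what remains is to verify that the image of $g$ in $SL(\Fset_2^{\A_{d-1}})$ lies in $\H_{d-1}$, i.e.\ that $\bar g$ permutes $\mathcal E = \{\bar e^\star\}\cup\{\bar e_p\}$. For this I would reduce $B$ modulo $2$: since $B$ fixes $e_{1-d}$ and its mod-$2$ image lies in $\H_d$, the permutation $\bar B$ of $\{\bar e^\star(d), \bar e_{1-d}, \bar e_p \ (p\neq 1-d)\}$ fixes $\bar e_{1-d}$, hence permutes the remaining $d$ vectors. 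One then checks that under the splitting $\Fset_2^{\A_d} = \Fset_2 e_{1-d} \oplus \Fset_2^{\A_{d-1}}$, the vector $\bar e^\star(d)$ projects to $\bar e^\star(d-1)$ and the $\bar e_p$ ($p\neq 1-d$) project to the $\bar e_p$ of $\A_{d-1}$; the block-upper-triangular form \eqref{eq1} shows $\bar g$ acts on these projections exactly as $\bar B$ acts on the corresponding vectors modulo $e_{1-d}$, so $\bar g \in \H_{d-1}$. Thus $g \in \G'_{d-1}$.

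The reverse inclusion $\G'_{d-1} \subseteq \varphi_d(\K_d)$ is the substantive part. Here I would argue that $\varphi_d$ is surjective onto $\G'_{d-1}$ by exhibiting, for each $g \in \G'_{d-1}$, a preimage. Given $g \in Sp(\Omega^{-1}(d-1),\Zset)$, the second equation of \eqref{eq2} is a linear equation for the line vector $v$: $\Omega(d-1)\,{}^t v = e^\sharp - g^{-1}e^\sharp$. When $d-1$ is even, $\Omega(d-1)$ is invertible, so $v$ is uniquely determined and the resulting $B$ automatically satisfies \eqref{e.symplecticity}; one must still check $\bar B \in \H_d$, which should follow from $\bar g \in \H_{d-1}$ together with the explicit formula for $v \bmod 2$ (note $e^\sharp - g^{-1}e^\sharp$ is the relevant obstruction, and its reduction mod $2$ is controlled by how $\bar g$ permutes $\mathcal E$). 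When $d-1$ is odd, $\Omega(d-1)$ has a one-dimensional kernel spanned by $h^\star$, so one needs: (i) solvability, i.e.\ that $e^\sharp - g^{-1}e^\sharp$ lies in the image $H(d-1)$ of $\Omega(d-1)$ — this is a compatibility condition that should hold because ${}^t h^\star(e^\sharp - g^{-1}e^\sharp) = {}^t h^\star e^\sharp - {}^t h^\star g^{-1} e^\sharp$ and Proposition \ref{prop1} (applied to $\G'_{d-1}$, which satisfies the same relation since it is cut out by \eqref{e.symplecticity}) gives ${}^t h^\star g^{-1} = {}^t h^\star$; and (ii) among the coset of solutions $v$, one can choose an \emph{integral} representative whose mod-$2$ reduction makes $\bar B \in \H_d$ — the freedom of adding multiples of $h^\star$ to ${}^t v$ should suffice to adjust the parity.

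The key technical point, and the step I expect to be the main obstacle, is verifying the mod-$2$ condition $\bar B \in \H_d$ for the constructed preimage $B$, particularly in the odd case where $v$ is only determined up to $h^\star$: one must show that the ambiguity can always be resolved so that $\bar B$ permutes $\{\bar e^\star(d)\} \cup \{\bar e_p\}_{p\in\A_d}$, and not merely that $\bar B$ is \emph{some} element of $SL(\Fset_2^{\A_d})$ fixing $\bar e_{1-d}$. Concretely this amounts to matching the row $v \bmod 2$ against the requirement that $\bar B(\bar e^\star(d))$ be the prescribed basis vector or $\bar e^\star(d)$; I would compute $\bar B(\bar e^\star(d))$ using $\bar e^\star(d) = \bar e_{1-d} + \bar e^\sharp$ and the block form, reducing to an identity relating $v\bmod 2$, $\bar g(\bar e^\star(d-1))$, and $\bar e^\sharp$. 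A secondary subtlety is that $\G'_{d-1}$ — as opposed to $\G_{d-1}$ — is the target here, so I cannot invoke any generator-by-generator argument; everything must be done using only the defining relations \eqref{e.symplecticity} and membership in $\H_{d-1}$, which is why Proposition \ref{prop1}'s validity for all of $\G'_{d-1}$ (immediate from \eqref{e.symplecticity}) is what makes the compatibility condition (i) go through.
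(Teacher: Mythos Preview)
Your approach is essentially the same as the paper's: both inclusions are handled as you describe, the solvability in the $d$ even case is established via the relation ${}^t h^\star g^{-1} = {}^t h^\star$ (valid for all of $\G'_{d-1}$ by \eqref{e.symplecticity}), and the mod-$2$ condition on the lift $B$ is correctly identified as the crux.

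Where you leave the argument open, the paper closes it with a clean two-case split that works uniformly in both parities and does \emph{not} rely on the $h^\star$-freedom you mention. Writing $\bar g \in \H_{d-1}$, either (a) $\bar g$ permutes the $\bar e_p$, in which case $g^{-1} e^\sharp \equiv e^\sharp \pmod 2$, so the right-hand side $e^\sharp - g^{-1} e^\sharp$ is even and one may take $v$ even; or (b) there is some $p$ with $\bar g \bar e_p = \bar e^\star(d-1) = \bar e^\sharp$, in which case $e^\sharp - g^{-1} e^\sharp \equiv e^\sharp - e_p \equiv \Omega(d-1) e_p \pmod 2$, so one may take $v \equiv {}^t e_p \pmod 2$. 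In either case the resulting $\bar B$ lies in $\H_d$. Note in particular that when $d$ is odd and $v$ is \emph{forced}, this argument shows the forced $v$ already has the correct parity---your worry about ``adjusting'' is moot there, and in the $d$ even case the paper simply exhibits the right coset representative directly rather than arguing abstractly that the $h^\star$-translates cover the needed parity class.
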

\begin{proof}
First, the image is contained into $\G'_{d-1}$: if $B$ is congruent mod.$2$ to a matrix in $\H_d$, $g$ is congruent mod.$2$ to a matrix in $\H_{d-1}$.
\smallskip
For the converse, let $g \in \G'_{d-1}$. We first observe that, when $d$ is even, the vector $e^{\sharp} - g^{-1} e^{\sharp}$ is contained in the image $H(d-1)$ of $\Omega(d-1)$: indeed, one has (with $h^{\star} = \sum_{p \in \A_{d-1}} (-1)^{p/2} e_p$)
\begin{equation}\label{eq3}
^t h^{\star} (g e -e)  =0, \qquad \forall \, g \in Sp(\Omega^{-1}(d-1),\Zset), \ \forall \, e \in \Zset^{\A_{d-1}}
\end{equation}
according to Proposition \ref{prop1}.

We now check that it is always possible to choose a solution $v$ of the second equation in (\ref{eq2}) such that $B$ is congruent mod.$2$ to a matrix in $\H_d$.
There are two cases:
\begin{itemize}
\item The reduction mod.$2$ of $g$ permutes the $\bar e_p$, $p \in \A_{d-1}$.
In this case, the vector $e^{\sharp} - g^{-1} e^{\sharp}$ is even and one can find an even vector $v$ which satisfies the second equation of (\ref{eq2}). Then the reduction mod.$2$ of $B$ belongs to $\H_d$.
\item There exists $p \in \A_{d-1}$ such that $g.e_p$ is congruent mod.$2$ to $e^{\sharp}$. Then $e^{\sharp} - g^{-1} e^{\sharp}$ is congruent mod.$2$ to $e^{\sharp} - e_p$, which is itself congruent mod.$2$ to $\Omega(d-1) e_p$. Therefore one can find a solution $v$ of the second equation of (\ref{eq2}) which is congruent mod.$2$ to ${}^t e_p$. Then the reduction mod.$2$ of $B$ belongs to $\H_d$.
\end{itemize}

This proves the proposition.
\end{proof}

\begin{proposition}\label{prop4}
When $d$ is odd, the homomorphism $\varphi_d$ is an isomorphism.
\end{proposition}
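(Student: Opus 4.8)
The plan is to combine Proposition \ref{prop3}, which already identifies the image of $\varphi_d$ with $\G'_{d-1}$, with a one-line computation of $\ker\varphi_d$ that uses the parity hypothesis on $d$. Thus the only thing left to establish is injectivity of $\varphi_d$.

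First I would unwind the definitions. An element $B\in\K_d$, written in the block form \eqref{eq1}, lies in $\ker\varphi_d$ precisely when its lower-right block satisfies $g=1$. Substituting $g=1$ into the second relation of \eqref{eq2} gives $\Omega(d-1)\,{}^t v = e^{\sharp}-g^{-1}e^{\sharp}=0$, so ${}^t v\in\ker\Omega(d-1)$, while the first relation of \eqref{eq2} is automatically satisfied.

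The key step is then the observation that, since $d$ is odd, $d-1$ is even; and for $d-1$ even the matrix $\Omega(d-1)$ is unimodular (as recalled in the subsection ``Symplecticity for $d$ even''), hence invertible over $\Zset$. Therefore $\ker\Omega(d-1)=0$, which forces $v=0$ and so $B=1$. Hence $\varphi_d$ is injective, and together with Proposition \ref{prop3} it is an isomorphism from $\K_d$ onto $\G'_{d-1}$.

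There is essentially no obstacle here. The only points requiring minimal care are that the block decomposition of $\Omega(d)$ and the equivalence \eqref{eq2} were derived for arbitrary $d$, so specializing to $g=1$ is legitimate, and that it is exactly the parity of $d-1$ that makes $\Omega(d-1)$ nondegenerate — the degenerate case ($d-1$ odd) being precisely where $\ker\Omega(d-1)$ is nontrivial, which is why the isomorphism statement is restricted to odd $d$. Note also that the mod-$2$ condition defining $\H_d$ is not used for injectivity; it entered only in the surjectivity argument of Proposition \ref{prop3}.
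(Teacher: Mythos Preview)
Your proposal is correct and follows essentially the same route as the paper: the paper's proof is the one-liner ``$\Omega(d-1)$ is invertible in this case, hence the second equation in \eqref{eq2} has a unique solution,'' which is exactly your observation that $v$ is determined by $g$ when $d-1$ is even. You have simply unpacked this by computing $\ker\varphi_d$ explicitly and invoking Proposition~\ref{prop3} for surjectivity, whereas the paper leaves both points implicit.
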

\begin{proof}
Indeed, $\Omega(d-1)$ is invertible in this case, hence the second equation in \eqref{eq2} has an unique solution.
\end{proof}

\begin{proposition}\label{prop5}
When $d$ is even, two matrices $B_0,B_1 \in \K_d$ as in (\ref{eq1}) have the same image under $\varphi_d$ if and only if the difference $v_1 -v_0$ of the corresponding vectors is an even multiple of $^t h^{\star}$.
\end{proposition}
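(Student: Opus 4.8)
The plan is to analyze the indeterminacy in the second equation of \eqref{eq2}, namely $\Omega(d-1)\,{}^t v = e^{\sharp} - g^{-1}e^{\sharp}$, under the standing hypothesis that $d$ is even and $g = \varphi_d(B_0) = \varphi_d(B_1)$. Two matrices $B_0, B_1 \in \K_d$ with the same image $g$ differ only in their top-row vectors $v_0, v_1$, and both $v_0, v_1$ solve this same linear system; hence $\Omega(d-1)\,{}^t(v_1 - v_0) = 0$, i.e.\ ${}^t(v_1 - v_0)$ lies in the kernel of $\Omega(d-1)$. Now $d-1$ is odd, so Lemma \ref{lem1} (applied with $d-1$ in place of $d$) tells us that this kernel is exactly $\Zset\cdot h^{\star}$, where $h^{\star} = \sum_{p \in \A_{d-1}} (-1)^{p/2} e_p$. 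Therefore $v_1 - v_0 = n\,{}^t h^{\star}$ for some integer $n$, and conversely any such modification of $v_0$ still yields a matrix satisfying \eqref{e.symplecticity}. This already proves the proposition up to the parity constraint on $n$.

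The remaining point — and the only genuine content — is to show that the integer $n$ must be \emph{even}, equivalently that $B_0$ and $B_1$ are both required to reduce mod $2$ into $\H_d$. This is where the defining condition of $\G'_d$ (membership in $\H_d$ after reduction mod $2$) enters, as opposed to mere symplecticity. First I would record that, since $v_1 = v_0 + n\,{}^t h^{\star}$, the reductions $\bar B_0$ and $\bar B_1$ in $SL(\Fset_2^{\A_d})$ differ precisely in the top row, by $\bar n \cdot {}^t\bar h^{\star}$ where $\bar h^{\star} = \sum_{p \in \A_{d-1}} \bar e_p = \bar e^{\sharp}$ (because $(-1)^{p/2}$ is odd, so reduces to $1$). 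So if $n$ is odd, then $\bar B_1 = \bar B_0 + E$, where $E$ is the matrix whose only nonzero entries are $1$'s in the top row at the positions indexed by $\A_{d-1}$.

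The key step is then to verify that $\bar B_0$ and $\bar B_0 + E$ cannot both lie in $\H_d$. Recall from Corollary \ref{cor1} that $\H_d$ consists of the elements of $SL(\Fset_2^{\A_d})$ permuting the $(d+1)$-element set $\mathcal E = \{\bar e^{\star}\} \cup \{\bar e_p\}_{p\in\A_d}$, and here $\bar B_0$ fixes $\bar e_{1-d}$ (it is in the stabilizer $\K_d$), so $\bar B_0$ permutes the $d$-element set $\mathcal E \setminus \{\bar e_{1-d}\} = \{\bar e^{\star}, \bar e_p : p \in \A_{d-1}\}$ while fixing $\bar e_{1-d}$. I would evaluate $\bar B_0 + E$ on each element $\bar e_q$, $q \in \A_{d-1}$: since $E\bar e_q = \bar e_{1-d}$ for each such $q$, we get $(\bar B_0 + E)\bar e_q = \bar B_0 \bar e_q + \bar e_{1-d}$. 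If $\bar B_0 \bar e_q = \bar e_r$ for some $r \in \A_{d-1}$ then $(\bar B_0+E)\bar e_q = \bar e_r + \bar e_{1-d} \notin \mathcal E$ (a sum of two distinct basis vectors is never in $\mathcal E$ — this uses $d \geq 4$, the case $d=2$ being already settled in the base step, and that $\bar e^\star$ is the sum of \emph{all} $d \geq 4$ basis vectors); if instead $\bar B_0 \bar e_q = \bar e^{\star}$ then $(\bar B_0+E)\bar e_q = \bar e^{\star} + \bar e_{1-d} = \sum_{p \in \A_{d-1}} \bar e_p \notin \mathcal E$ for the same reason. Either way $\bar B_0 + E$ moves some basis vector off $\mathcal E$, hence $\bar B_0 + E \notin \H_d$. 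This contradiction forces $n$ even. The main obstacle is thus purely this mod-$2$ bookkeeping — identifying $\bar h^{\star}$ with $\bar e^{\sharp}$, using that $\bar B_0$ is a permutation of $\mathcal E \setminus \{\bar e_{1-d}\}$, and checking $\bar e^{\star} + \bar e_{1-d}$ and $\bar e_r + \bar e_{1-d}$ fall outside $\mathcal E$ — none of which is hard, but it must be done carefully to cover small cases.
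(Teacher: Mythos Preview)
Your proof is correct and follows the same two-step skeleton as the paper: first use Lemma~\ref{lem1} (applied to $d-1$, which is odd) to see that $v_1-v_0$ lies in $\Zset\cdot{}^t h^\star$, and then invoke the mod-$2$ constraint coming from membership in $\H_d$ to force the multiple to be even. The paper's proof is a two-line reference back to the case analysis at the end of the proof of Proposition~\ref{prop3}, the point being that that analysis implicitly pins down $\bar v$ uniquely from $\bar g$ (either $\bar v=0$ or $\bar v={}^t\bar e_p$, according to the two cases there); you instead argue the uniqueness directly by showing that $\bar B_0$ and $\bar B_0+E$ cannot simultaneously permute $\mathcal E$, which is the same content made explicit. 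Your caveat that the argument needs $d\geq 4$ (so that a sum of two basis vectors, or of $d-1$ basis vectors, falls outside $\mathcal E$) is well placed, and indeed the proposition is only used in the induction for $d\geq 4$.
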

\begin{proof}
Indeed $h^{\star}$ is a basis of the $1$-dimensional kernel of $\Omega(d-1)$. The assertion of the proposition results from the end of the proof of Proposition \ref{prop3}.
\end{proof}

The stabilizer $\K_d$ of $e_{1-d}$ in $\G'_d$ is completely described by Propositions \ref{prop3}, \ref{prop4}, \ref{prop5}.

\subsection{The subgroups $\S_{p,q}$ of $\G_d$}

Let $p < q$ be distinct elements of $\A_d$. We denote by $\S_{p,q}$ the subgroup of $\G_d$ generated by $L_p$ and $L_q$.

Let $B \in \S_{p,q}$. As the vectors $e_r$, $r \ne p,q$, are fixed by both $L_p$ and $L_q$, we have $B.e_r = e_r$ for such $r$. We denote by $B^{\sharp}$ the $2 \times 2$ matrix $$ B^{\sharp}  := \left( \begin{array}{cc} B_{p,p} & B_{p,q} \\ B_{q,p} & B_{q,q} \end{array} \right).$$

\begin{lemma}\label{lem2}
\begin{enumerate}
\item The map $B \mapsto B^{\sharp}$ is an isomorphism from $\S_{p,q}$ onto $SL(2,\Zset)$.
\item The other coefficients of $B$ in the $p$-th and $q$-th columns are given by

$$ B_{r,p} =  \left\{ \begin{array}{rcc} -1+B_{p,p} - B_{q,p} & {\rm if } & r<p \\ -1+B_{p,p} + B_{q,p} & {\rm if } & p<r<q \\ 1 - B_{p,p} + B_{q,p} & {\rm if } & r>q \end{array} \right.$$

$$ B_{r,q} =  \left\{ \begin{array}{rcc} 1+B_{p,q} - B_{q,q} & {\rm if } & r<p \\ -1+B_{p,q} + B_{q,q} & {\rm if } & p<r<q \\ -1 - B_{p,q} + B_{q,q} & {\rm if } & r>q \end{array} \right.$$
\end{enumerate}
\end{lemma}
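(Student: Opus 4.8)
The statement concerns the subgroup $\S_{p,q} \subset \G_d$ generated by the two operators $L_p, L_q$, and asks for (1) the identification of $\S_{p,q}$ with $SL(2,\Zset)$ via $B \mapsto B^\sharp$, and (2) explicit formulas for the remaining entries of the $p$-th and $q$-th columns. My plan is to do everything by an explicit two-generator computation, leveraging the fact that the $e_r$ with $r\ne p,q$ are already known to be fixed.

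First I would set up coordinates. Since every $B\in\S_{p,q}$ fixes $e_r$ for $r\ne p,q$, the only nontrivial data is how $B$ acts on $\mathrm{span}(e_p,e_q)$ modulo the fixed subspace; concretely $B.e_p = B_{p,p}e_p + B_{q,p}e_q + \sum_{r\ne p,q}B_{r,p}e_r$ and similarly for $e_q$. From the definition of $L_p$ one reads off the $2\times 2$ matrices $L_p^\sharp$ and $L_q^\sharp$ directly: $L_p$ sends $e_p \mapsto -\sum_{r<p}e_r + \sum_{r\ge p}e_r$, so $L_p^\sharp = \begin{pmatrix}1&0\\1&1\end{pmatrix}$ (the entry $B_{q,p}$ being $+1$ since $q>p$), while $L_q^\sharp = \begin{pmatrix}1&-1\\0&1\end{pmatrix}$ (here $p<q$ gives coefficient $-1$ on $e_p$). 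These are the standard generators of $SL(2,\Zset)$, so the map $B\mapsto B^\sharp$ is a surjective homomorphism onto $SL(2,\Zset)$. For injectivity I would argue that, since $SL(2,\Zset)$ is a free product $\Zset/4 *_{\Zset/2} \Zset/6$ (or simply: the word in $L_p^\sharp,L_q^\sharp$ representing a given element of $SL(2,\Zset)$ determines the same word in $L_p,L_q$, and $B^\sharp=\mathrm{Id}$ forces $B.e_p=e_p+(\text{fixed part})$, $B.e_q=e_q+(\text{fixed part})$); the cleanest route is: part (2), once proved, shows that the entries $B_{r,p},B_{r,q}$ for $r\ne p,q$ are determined by $B^\sharp$, hence $B^\sharp = \mathrm{Id}$ implies $B=\mathrm{Id}$, giving injectivity. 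So parts (1) and (2) are really proved together, with (2) being the workhorse.

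For part (2), the natural approach is induction on word length in the generators $L_p^{\pm 1}, L_q^{\pm 1}$. Let $\mathcal{C}$ be the set of matrices $B\in SL(\Zset^{\A_d})$ that fix $e_r$ ($r\ne p,q$) and whose $p$-th and $q$-th columns satisfy the displayed formulas; I would check that $\mathcal{C}$ contains the identity (trivially: $B_{p,p}=B_{q,q}=1$, $B_{q,p}=B_{p,q}=0$ give $B_{r,p}=B_{r,q}=0$ in all three ranges) and is stable under left multiplication by $L_p^{\pm 1}$ and $L_q^{\pm 1}$. The stability check is a finite linear computation: multiplying $B$ on the left by $L_p$ changes column $j$ by adding $B_{p,j}$ to every entry $B_{r,j}$ with $r<p$ and subtracting $B_{p,j}$ from every $B_{r,j}$ with $r>p$ (leaving row $p$ fixed), and one verifies the three-range formulas are preserved — this splits into subcases according to whether the "pivot" index $p$ of $L_p$ lies below $p$, between $p$ and $q$, or above $q$ relative to the index $r$; since the pivot is exactly $p$ itself here, only the boundary cases at $r=p$ need care. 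Alternatively, and perhaps more transparently, one can observe that $e^\sharp := \sum_{r\in\A_d} e_r$ minus the two-dimensional behaviour is controlled by the symplecticity relation $B\,\Omega\,{}^tB = \Omega$ applied to the $2\times2$ block, which pins down the off-block entries; I would try both and present whichever is shorter.

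**Anticipated main obstacle.** The bookkeeping in part (2) — keeping straight the signs in the three ranges $r<p$, $p<r<q$, $r>q$ as one multiplies by each of the four generators — is the only real source of friction; it is routine but error-prone, and the asymmetry between the formulas for the $p$-column and the $q$-column (note the sign patterns differ) means the two columns must be tracked separately. The conceptual content (two elementary matrices generate $SL(2,\Zset)$, the rest of the matrix is forced) is immediate; making the induction airtight is where the work lies.
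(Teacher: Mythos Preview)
Your approach is essentially the same as the paper's: both argue surjectivity from the fact that $L_p^\sharp,L_q^\sharp$ generate $SL(2,\Zset)$, then prove the formulas in (2) by induction on word length in the generators (base case the identity, inductive step multiplication by $L_p^{\pm1}$ or $L_q^{\pm1}$), and derive injectivity from (2). The only cosmetic differences are that the paper multiplies on the right (appending the last letter $w$ to $W=W'\cdot w$) whereas you multiply on the left, and your description of left multiplication by $L_p$ has the signs flipped --- one actually gets $B_{r,j}-B_{p,j}$ for $r<p$ and $B_{r,j}+B_{p,j}$ for $r>p$ --- but this slip does not affect the soundness of the plan.
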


\begin{proof}
The case $d=2$ (cf. the end of Subsection \ref{ssStat}) shows that $\psi: B \mapsto B^{\sharp}$ is onto $SL(2,\Zset)$. Let $W$ be a word in $L_p^{\pm 1}, L_q^{\pm1}$, $B$ the corresponding element of $\S_{p,q}$. We show, by induction on the length of $W$, that $B$ is determined by $\psi(B)$, with the formulas of the lemma. This is true when $W$ is the empty word. When $W$ has positive length, let $w$ be the last letter of $W$, write $W = W'.w$ and let $B'$ be the matrix associated to $W'$. If for instance $w = L_q^{\eta}$, $\eta \in \{\pm 1\}$, one has, for all $r \in \A_d$
$$ \left\{ \begin{array}{ccl} B_{r,p} & = & B'_{r,p} \\ B_{r,q} & = & B'_{r,q} - \eta B'_{r,p} + \eta \va\end{array} \right. \quad \textrm{where} \quad \va := \left\{ \begin{array}{rcc} -1 & {\rm if } & r<p \\
                                                          0 & {\rm if } & r=p \\
                                                           -1 & {\rm if } & p<r<q \\
                                                            0 & {\rm if } & r=q \\
                                                             1 & {\rm if } & r>q \\ \end{array} \right.$$
It follows that the formulas of the lemma for the $B'_{r,p}, B'_{r,q}$ imply the same formulas for the $B_{r,p}, B_{r,q}$. One deals similarly with the case $w = L_p^{\eta}$. This proves the lemma.
\end{proof}

\subsection{The induction step in the odd case}

In this subsection, we assume that $d \geq 3$ is odd and that Theorems \ref{thm1}, \ref{thm2} hold for $d-1$.

\begin{proposition}\label{prop6}
The stabilizer of $e_{1-d}$ in $\G_d$ is equal to $\K_d$. It is generated by the $L_p$, $p\in \A_d$, $p\ne 1-d$.
\end{proposition}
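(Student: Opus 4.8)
The plan is to prove the two assertions in the natural order: first that the stabilizer of $e_{1-d}$ in $\G_d$ equals the full group $\K_d$ (the stabilizer of $e_{1-d}$ in $\G'_d$), and then that this group is generated by the $L_p$ with $p \ne 1-d$. For the first assertion, note one inclusion is automatic: since $\G_d \subset \G'_d$ (by Proposition \ref{prop2} and \eqref{e.symplecticity}), the stabilizer of $e_{1-d}$ in $\G_d$ is contained in $\K_d$. For the reverse inclusion, I would exploit the homomorphism $\varphi_d \colon \K_d \to Sp(\Omega^{-1}(d-1),\Zset)$, which by Proposition \ref{prop4} is an \emph{isomorphism} onto its image in the odd case, and by Proposition \ref{prop3} that image is exactly $\G'_{d-1}$. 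By the inductive hypothesis (Theorem \ref{thm1} for $d-1$) we have $\G'_{d-1} = \G_{d-1}$. So it suffices to exhibit, for each generator $L_p$ of $\G_{d-1}$ (i.e., $p \in \A_{d-1}$), a preimage under $\varphi_d$ that lies in $\G_d$ and fixes $e_{1-d}$. The obvious candidate is $L_p$ itself, viewed as an operator on $\Zset^{\A_d}$: indeed $L_p$ with $p \ne 1-d$ fixes $e_{1-d}$, lies in $\G_d$ by definition, and its block form \eqref{eq1} has lower-right block equal to the operator $L_p$ acting on $\Zset^{\A_{d-1}}$ (after the index shift), so $\varphi_d(L_p) = L_p$. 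Hence $\varphi_d$ restricted to the stabilizer of $e_{1-d}$ in $\G_d$ already surjects onto $\G_{d-1} = \G'_{d-1}$, and since $\varphi_d$ is injective on all of $\K_d$, this forces the stabilizer of $e_{1-d}$ in $\G_d$ to be all of $\K_d$.

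The second assertion — that $\K_d$ is generated by the $L_p$, $p \ne 1-d$ — then follows from essentially the same bookkeeping. Let $\K_d^\circ$ denote the subgroup of $\G_d$ generated by the $L_p$ with $p \in \A_d \setminus \{1-d\}$; these all fix $e_{1-d}$, so $\K_d^\circ \subseteq \K_d$. Under $\varphi_d$, the group $\K_d^\circ$ maps onto the group generated by the operators $L_p$ on $\Zset^{\A_{d-1}}$, which is $\G_{d-1} = \G'_{d-1} = \varphi_d(\K_d)$ by Proposition \ref{prop3} and the inductive hypothesis. Since $\varphi_d$ is injective (Proposition \ref{prop4}), a subgroup of $\K_d$ mapping onto $\varphi_d(\K_d)$ must equal $\K_d$; therefore $\K_d^\circ = \K_d$, which is the claim.

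The main point to get right is the identification $\varphi_d(L_p) = L_p$ under the index shift — that is, checking carefully that when $L_p$ (with $p \ne 1-d$) is written in the block form \eqref{eq1}, the upper-left entry is $1$, the top row $v$ vanishes (which holds because $L_p$ fixes $e_{1-d}$ and has no $e_{1-d}$-component in the image of any other basis vector, as $1-d$ is the smallest element of $\A_d$ and the definition of $L_p$ only modifies $L_p(e_p)$), and the lower-right block is precisely the operator $L_p$ on $\Zset^{\A_{d-1}}$. Granting this, everything reduces to the injectivity of $\varphi_d$ in the odd case (Proposition \ref{prop4}) together with the surjectivity statement of Proposition \ref{prop3} and the inductive hypothesis, so there is no serious obstacle; the argument is a short diagram chase once the setup is in place.
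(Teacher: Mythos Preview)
Your approach is essentially identical to the paper's: both use the injectivity of $\varphi_d$ (Proposition~\ref{prop4}) together with $\varphi_d(\K_d) = \G'_{d-1} = \G_{d-1}$ (Proposition~\ref{prop3} and the induction hypothesis) to conclude that the subgroup generated by $\{L_p : p \ne 1-d\}$, which already surjects onto $\G_{d-1}$ under $\varphi_d$, must coincide with all of $\K_d$.

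One small slip in your last paragraph: the top-row vector $v$ in the block form \eqref{eq1} of $L_p$ does \emph{not} vanish. Since $1-d < p$, the defining formula gives $L_p(e_p) = -\sum_{r<p} e_r + \sum_{r\geq p} e_r$, which contains the term $-e_{1-d}$; hence $v_p = -1$. This is harmless for your argument, however, because $\varphi_d$ is defined to extract only the lower-right block $g$, and your identification of that block with the operator $L_p$ on $\Zset^{\A_{d-1}}$ (after the index shift) is correct.
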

\begin{proof}
As $\G_d \subset \G'_d$, this stabilizer is contained in $\K_d$. Conversely, let $B \in \K_d$. Write $B$ as in (\ref{eq1}). As $\G_{d-1} = \G'_{d-1}$ by the induction hypothesis, there exists in the subgroup generated by the $L_p$, $p\in \A_d, p\ne 1-d$ a matrix in form (\ref{eq1}) with the same image than $B$ in $\G_{d-1}$. This matrix has to be equal to $B$ by Proposition \ref{prop4}.
\end{proof}

\begin{proposition}\label{prop7}
The orbit of $e_{d-1}$ under $\G_d$ is equal to the set $\mathcal O_d$ of primitive vectors in
$\Zset^{\A_d}$ which belong to the affine hyperplane $\{^t h^{\star}.(v-e_{d-1}) =0\}$ and are congruent mod.$2$ to a vector in $\mathcal E = \{\sum_{p\in \Ad} \bar e_p\}\cup\{\bar e_p\}_{p \in \Ad}$.
\end{proposition}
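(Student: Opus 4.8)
### Proof plan for Proposition \ref{prop7}

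The plan is to prove the two inclusions separately, with the harder direction being that every vector in $\mathcal O_d$ is actually hit by $\G_d$. The easy inclusion is that the orbit of $e_{d-1}$ lands in $\mathcal O_d$: every $B\in\G_d$ satisfies ${}^t h^\star B = {}^t h^\star$ by Proposition \ref{prop1}, so ${}^t h^\star (B e_{d-1} - e_{d-1}) = 0$; the image of $B e_{d-1}$ mod $2$ lies in $\mathcal E$ by Corollary \ref{cor1} (the image of $\G_d$ in $SL(\Fset_2^{\A_d})$ permutes $\mathcal E$ and $\bar e_{d-1}\in\mathcal E$); and primitivity is preserved since $B\in SL(\Zset^{\A_d})$. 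So $\G_d . e_{d-1}\subseteq\mathcal O_d$.

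For the reverse inclusion, I would argue that any $v\in\mathcal O_d$ can be moved to $e_{d-1}$ by a product of the generators $L_p^{\pm 1}$. The natural tool is the action of the $2$-generated subgroups $\S_{p,q}\cong SL(2,\Zset)$ from Lemma \ref{lem2}: these let me modify a pair of coordinates $(v_p,v_q)$ of $v$ by an arbitrary element of $SL(2,\Zset)$ while fixing all other $v_r$. The key point is that $v$ is primitive, so I can find adjacent-in-value coordinates whose gcd is small and run a Euclidean-type reduction. Concretely, I expect to proceed as follows. First, using the $\S_{p,q}$ with $p,q$ neighbors in $\A_d$, perform Euclidean reduction to decrease $\sum_p |v_p|$ (or some similar norm); the primitivity and the $SL(2)$-action should allow one to reach a vector with a single nonzero coordinate equal to $\pm1$. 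Then one must check the sign and position constraints: the mod-$2$ condition $v\equiv(\text{element of }\mathcal E)$ restricts which single coordinate $\pm e_p$ one can land on, and the affine hyperplane condition ${}^t h^\star (v-e_{d-1})=0$ together with ${}^t h^\star\cdot(\pm e_p) = \pm(-1)^{p/2}$ pins down which $\pm e_p$ are reachable; finally the $\S_{p,q}$ (or the fact that in $SL(2,\Zset)$ one can send $e_p\mapsto -e_p$ and $e_p\mapsto e_q$ when $e_p,e_q$ have the right mod-$2$ and hyperplane-compatible behaviour) should connect all these admissible $\pm e_p$ to $e_{d-1}$ itself. One should be careful that the mod-$2$ image in $\mathcal E$ is preserved throughout and that the element $\bar e^\star = \sum\bar e_p$ possibility is handled — a primitive integral vector congruent to $\bar e^\star$ has all coordinates odd, and one must verify this class, too, reduces to $e_{d-1}$.

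The main obstacle I anticipate is the Euclidean reduction step: I need to guarantee that as long as $v$ is not yet of the form $\pm e_p$, there is some pair $p,q$ of indices and some element of $\S_{p,q}$ strictly decreasing the chosen norm, \emph{without} being able to freely permute coordinates (the index set $\A_d$ has a fixed linear order, and the formulas in Lemma \ref{lem2} for the off-diagonal entries $B_{r,p},B_{r,q}$ depend on whether $r<p$, $p<r<q$, or $r>q$). Since $\S_{p,q}$ only touches coordinates $p$ and $q$, a clean induction should still work: pick a nonzero coordinate of minimal absolute value, use $\S_{p,q}$ against each other nonzero coordinate to reduce it modulo that minimum, iterate; primitivity forces the process to terminate at a unit vector. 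The bookkeeping of the mod-$2$ class (staying inside $\mathcal E$) and the hyperplane invariant during these operations is routine but must be tracked. An alternative, possibly cleaner, route is to invoke Proposition \ref{prop6}: the stabilizer of $e_{1-d}$ in $\G_d$ is $\K_d$ with $\varphi_d:\K_d\xrightarrow{\sim}\G'_{d-1}=\G_{d-1}$, so one could describe the $\G_d$-orbit of $e_{d-1}$ via a transitivity argument combined with the inductive hypothesis (Theorem \ref{thm2} for $d-1$) on the orbit of $e_{d-3}$ under $\G_{d-1}$; I would try both and present whichever yields the shorter argument.
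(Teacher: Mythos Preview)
Your easy inclusion is fine, and your option (b) is essentially the paper's approach. But your option (a) rests on a false premise: you write that $\S_{p,q}$ ``let[s] me modify a pair of coordinates $(v_p,v_q)$ of $v$ by an arbitrary element of $SL(2,\Zset)$ while fixing all other $v_r$'', and later that ``$\S_{p,q}$ only touches coordinates $p$ and $q$''. This is wrong. What Lemma~\ref{lem2} says is that $B\in\S_{p,q}$ fixes the \emph{basis vectors} $e_r$ for $r\neq p,q$, i.e.\ the $r$-th \emph{column} of $B$ is $e_r$. That does not mean the $r$-th \emph{row} vanishes outside $\{p,q\}$; in fact part~(2) of the lemma computes the generally nonzero entries $B_{r,p},B_{r,q}$ for $r\notin\{p,q\}$. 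Consequently $(Bv)_r = v_r + B_{r,p}v_p + B_{r,q}v_q$ for $r\notin\{p,q\}$, so every application of $\S_{p,q}$ perturbs all the other coordinates. Your proposed Euclidean descent (``pick a coordinate of minimal absolute value, reduce the others modulo it, iterate'') therefore does not work as stated, since each reduction step scrambles the coordinates you were hoping to leave alone.

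The paper follows your route (b), and the extra idea you are missing is this: a \emph{single} use of $\S_{1-d,3-d}$ suffices to make the $(1-d)$-coordinate of $v$ equal to zero (the other coordinates change, but one doesn't care --- the result is still in $\mathcal O_d$). Once $v_{1-d}=0$, the truncated vector $v'=\sum_{p>1-d} v_p e_p$ is automatically primitive (since $Bv$ is) and lies in the correct mod-$2$ class, so the induction hypothesis (Theorem~\ref{thm2} for $d-1$) produces $g\in\G_{d-1}$ with $g\,e_{d-1}=v'$. Lifting $g$ to $B\in\K_d$ via the isomorphism of Proposition~\ref{prop4}, the hyperplane constraint ${}^t h^\star(v-Be_{d-1})=0$ forces $Be_{d-1}=v$, and Proposition~\ref{prop6} gives $\K_d\subset\G_d$. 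There is no need for a multi-step Euclidean reduction or for separately connecting the various $\pm e_p$ to $e_{d-1}$.
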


\begin{proof}
The set  $\mathcal O_d$ contains $e_{d-1}$ and satisfies $L_p(\mathcal O_d) \subset \mathcal O_d$ for all $p \in \A_d$. Hence, $\mathcal{O}_d$ contains the orbit of $e_{d-1}$ under $\G_d$.

Conversely, let $v = \sum_{p\in \A_d} v_p e_p$ be a vector in $\mathcal O_d$.
\begin{itemize}
\item Assume first that the vector $v':= \sum_{p\in \A_d\setminus\{1-d\}} v_p e_p$ is primitive.
In particular, it is not even, hence it is congruent mod.$2$ to either some $e_p$ ($p \in \A_d , p\ne1-d$) or to $ \sum_{p\in \A_d , p\ne1-d}  e_p$. By Theorem \ref{thm2} for $(d-1)$, there exists $g \in \G_{d-1}$ such that $g.e_{d-1} = v'$ (we shift by $1$ the indices and consider $e_{d-1}$ and $v'$ as vectors in $\Zset^{\A_{d-1}}$). Let $B$ be the matrix in $\K_d$ associated to $g$ by (\ref{eq1}) and Proposition \ref{prop4}.
The vector $B.e_{d-1}$ (now in $\Zset^{\A_d}$) is equal to $v$ because $ ^t h^{\star}.(v-B.e_{d-1}) =0$. As $\K_d \subset \G_d$, this proves that $v$ belongs to the orbit of $e_{d-1}$ under $\G_d$.
\item In the general case, Lemma \ref{lem2} says that one can find, in the subgroup generated $L_{1-d}, L_{3-d}$, a matrix $B$ such that
$$ B_{1-d,1-d} v_{1-d} + B_{1-d,3-d} v_{3-d} =0.$$
Then, the $(1-d)$-component of $B.v$ is equal to zero and $B.v$ satisfies the hypothesis of the first case. We conclude that $B.v$, hence also $v$, belongs to the orbit of $e_{d-1}$ under $\G_d$.
\end{itemize}

This completes the proof of the proposition.
\end{proof}

\begin{corollary}\label{cor2}
Theorem \ref{thm3} holds for $d$. In particular, the orbit of $e_{1-d}$ under $\G_d$ is equal to $\mathcal O_d$ from Proposition \ref{prop7}.
\end{corollary}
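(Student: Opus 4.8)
The plan is to bootstrap from Proposition \ref{prop7}. For $p \in \A_d$, let $\mathcal{O}_d(p)$ denote the set of primitive vectors of $\Zset^{\A_d}$ that are congruent mod $2$ to a vector in $\mathcal{E}$ and lie on the affine hyperplane $\{v : {}^t h^{\star}.(v - e_p) = 0\}$; thus Theorem \ref{thm3} for the base vector $e_p$ is precisely the assertion that the $\G_d$-orbit of $e_p$ equals $\mathcal{O}_d(p)$, and Proposition \ref{prop7} is this assertion for $p = d-1$, where $\mathcal{O}_d(d-1) = \mathcal{O}_d$.

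First I would observe that the proof of Proposition \ref{prop7} applies verbatim with $e_p$ in place of $e_{d-1}$ for \emph{every} $p \in \A_d$ with $p \ne 1-d$. The only features of $e_{d-1}$ used in that argument are: (i) its $(1-d)$-coordinate vanishes, so that $e_p$ is a basis vector of $\Zset^{\A_d \setminus \{1-d\}}$ and one may lift through the isomorphism $\K_d \cong \G_{d-1}$ supplied by Propositions \ref{prop3}, \ref{prop4}, \ref{prop6}; and (ii) Theorem \ref{thm2} for $d-1$, which says that $\G_{d-1}$ acts transitively on the set of primitive vectors of $\Zset^{\A_{d-1}}$ congruent mod $2$ to a vector in $\mathcal{E}$ --- a set which is independent of the chosen base vector because for $d-1$ even there is no hyperplane constraint. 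Both features hold equally for any $p \ne 1-d$, so the $\G_d$-orbit of $e_p$ equals $\mathcal{O}_d(p)$ whenever $p \ne 1-d$.

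It then remains to treat $p = 1-d$, which I would settle by a parity observation instead of repeating Proposition \ref{prop7}. Since $d$ is odd, the integers $(d-1)/2$ and $(1-d)/2$ differ by the even number $d-1$, so ${}^t h^{\star}.e_{1-d} = (-1)^{(1-d)/2} = (-1)^{(d-1)/2} = {}^t h^{\star}.e_{d-1}$. Hence $\mathcal{O}_d(1-d) = \mathcal{O}_d(d-1) = \mathcal{O}_d$, and $e_{1-d}$ --- being primitive, congruent mod $2$ to $\bar e_{1-d} \in \mathcal{E}$, and satisfying ${}^t h^{\star}.(e_{1-d} - e_{d-1}) = 0$ --- lies in $\mathcal{O}_d$, which by Proposition \ref{prop7} is the $\G_d$-orbit of $e_{d-1}$. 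Therefore $e_{1-d}$ and $e_{d-1}$ lie in the same $\G_d$-orbit, so the orbit of $e_{1-d}$ equals $\mathcal{O}_d = \mathcal{O}_d(1-d)$. Combined with the previous paragraph, this proves Theorem \ref{thm3} for $d$, and the special case $p = 1-d$ is the final displayed assertion.

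The one point deserving care is the claim that Proposition \ref{prop7}'s proof transfers unchanged to base vectors $e_p$ with $p \ne 1-d$. I would recheck the mod-$2$ bookkeeping: if $v \in \mathcal{O}_d(p)$ has primitive $\A_d \setminus \{1-d\}$-component $v'$, then $v'$ is congruent mod $2$ either to $\bar e_q$ for some $q \ne 1-d$ or to $\sum_{q \ne 1-d} \bar e_q$, and after the index shift both of these lie in the set playing the role of $\mathcal{E}$ for $\A_{d-1}$, so that Theorem \ref{thm2} for $d-1$ genuinely applies; and, in the remaining non-primitive case, that the use of $\S_{1-d,3-d}$ via Lemma \ref{lem2} to annihilate the $(1-d)$-coordinate works exactly as before, since it is $v$, not $e_p$, that is being moved. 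Apart from this bookkeeping, everything follows at once from Propositions \ref{prop3} through \ref{prop7}.
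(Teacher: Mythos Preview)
Your argument is correct, but the paper's route is considerably shorter. Rather than re-running the entire proof of Proposition~\ref{prop7} for each $e_p$ with $p\ne 1-d$, the paper dispatches all $p$ at once with a single sign observation: for every $p\in\A_d$, the vector $(-1)^{(d-1-p)/2}e_p$ is primitive, congruent mod~$2$ to $\bar e_p\in\mathcal E$, and satisfies ${}^t h^{\star}\cdot\bigl((-1)^{(d-1-p)/2}e_p\bigr)=(-1)^{(d-1)/2}={}^t h^{\star}\cdot e_{d-1}$, so it lies in $\mathcal O_d$ and hence, by Proposition~\ref{prop7}, in the $\G_d$-orbit of $e_{d-1}$. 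Since the $\G_d$-action is linear, the orbit of $e_p$ is then $(-1)^{(d-1-p)/2}\mathcal O_d$, which is exactly $\mathcal O_d(p)$. Your parity computation for $p=1-d$ is precisely this argument in the special case where the sign happens to be $+1$ (because $d-1$ is even); the paper simply applies the same idea uniformly. What your approach buys is that it avoids the sign bookkeeping and the implicit verification that $\varepsilon\,\mathcal O_d=\mathcal O_d(p)$, at the cost of invoking the full machinery of Propositions~\ref{prop3}--\ref{prop6} and Theorem~\ref{thm2} a second time; the paper's version trades that for a one-line reduction to Proposition~\ref{prop7} already proved.
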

\begin{proof}
Indeed, by Proposition \ref{prop7}, for any $p \in \A_d$, the orbit of $e_{d-1}$ under $\G_d$ contains $(-1)^{(d-1-p)/2} e_p$.
\end{proof}

We finally prove that $\G_d$ is equal to $\G'_d$ under the assumptions of this subsection.

\begin{proof}
Let $B \in \G'_d$. From Corollary \ref{cor2}, there exists $B_0 \in \G_d$ such that $B_0^{-1}.B$ fixes $e_{1-d}$. This means that $B_0^{-1}.B \in \K_d \subset \G_d$, hence $B$ belongs to $\G_d$.
\end{proof}

\subsection{The induction step in the even case}

In this section, we assume that $d \geq 4$ is even and that Theorems \ref{thm1}, \ref{thm3} hold for $d-1$.

\begin{proposition}\label{prop8}
The stabilizer of $e_{1-d}$ in $\G_d$ is equal to $\K_d$. It is generated by the $L_p$, $p\in \A_d, p\ne 1-d$.
\end{proposition}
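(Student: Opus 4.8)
The statement to prove is Proposition~\ref{prop8}: under the inductive hypotheses (Theorems~\ref{thm1} and~\ref{thm3} hold for $d-1$, with $d\geq 4$ even), the stabilizer of $e_{1-d}$ in $\G_d$ equals $\K_d$ and is generated by the $L_p$ with $p\ne 1-d$. The overall architecture should mirror the odd-case argument in Proposition~\ref{prop6}: since $\G_d\subset\G'_d$, the stabilizer of $e_{1-d}$ in $\G_d$ is automatically contained in $\K_d$, so the whole content is the reverse inclusion together with the generation statement. The plan is therefore to show that the subgroup $\K_d^\circ$ of $\G_d$ generated by $\{L_p: p\in\A_d,\ p\ne 1-d\}$ is all of $\K_d$.

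\textbf{Key steps.} First I would note that each $L_p$ with $p\ne 1-d$ fixes $e_{1-d}$ (immediate from the formula defining $L_p$, since $1-d<p$ always and $e_{1-d}$ does not occur with a sign change unless $p=1-d$), so $\K_d^\circ$ is indeed contained in the stabilizer, hence in $\K_d$. Next, given an arbitrary $B\in\K_d$ written in block form \eqref{eq1} with data $(v,g)$, I would use the induction hypothesis $\G_{d-1}=\G'_{d-1}$ (Theorem~\ref{thm1} for $d-1$) to produce an element of $\K_d^\circ$ whose image under $\varphi_d$ is $g$: concretely, lift a word in the $L_q$, $q\in\A_{d-1}$, realizing $g$ in $\G_{d-1}$ to the corresponding word in the $L_q$, $q\in\A_d\setminus\{1-d\}$, inside $\G_d$; this lands in $\K_d$ and maps to $g$. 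Call it $B_0\in\K_d^\circ$. Then $B_0^{-1}B\in\K_d$ lies in the kernel of $\varphi_d$. By Proposition~\ref{prop5} (this is where the even case genuinely diverges from the odd case), that kernel is exactly the set of matrices $\begin{pmatrix}1 & 2k\,{}^th^\star\\ 0 & \mathrm{Id}\end{pmatrix}$ with $k\in\Zset$. So it remains to exhibit such unipotent elements inside $\K_d^\circ$.

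\textbf{The main obstacle.} The genuinely new point — and the step I expect to take the most work — is producing the generator of $\ker\varphi_d$ inside $\K_d^\circ$, i.e.\ showing that the elementary unipotent $U$ with $v=2\,{}^th^\star$, $g=\mathrm{Id}$ is a product of $L_p$'s with $p\ne 1-d$. The natural strategy is to find some $L_q$ (or a short word in the $L_q$, $q\ne 1-d$) whose image $g$ under $\varphi_d$ is nontrivial but, when corrected by an explicit element already known to be in $\K_d^\circ$ with the same $g$, produces a nonzero element of $\ker\varphi_d$; by Proposition~\ref{prop5} that difference is an even multiple of ${}^th^\star$, and one then checks the multiple is $\pm 2$, i.e.\ a generator. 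Equivalently — and perhaps cleaner — one can use the commutator structure: since $\varphi_d$ is a homomorphism to $\G'_{d-1}=\G_{d-1}$, a commutator $[L_a,L_b]$ with $a,b\ne 1-d$ mapping to a commutator in $\G_{d-1}$ can be paired against a lift of that same $\G_{d-1}$-commutator to obtain an element of $\ker\varphi_d$; one must then verify this element is not the identity, which by Proposition~\ref{prop5} forces it to be a nonzero even multiple of ${}^th^\star$, and a direct computation with Lemma~\ref{lem2}-type formulas pins down the multiple. Once a generator of $\ker\varphi_d$ is in $\K_d^\circ$, combined with the surjectivity of $\varphi_d|_{\K_d^\circ}$ onto $\G_{d-1}$ established above, we get $\K_d^\circ=\K_d$, which simultaneously gives both assertions of the proposition. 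I would flag the explicit low-dimensional computation identifying the multiple $\pm 2$ as the one place where care is needed; everything else is formal lifting and bookkeeping.
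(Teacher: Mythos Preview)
Your architecture is exactly the paper's: reduce to showing that $\K_d^\circ$ (the paper writes $\K_d^\flat$) surjects onto $\G'_{d-1}$ via $\varphi_d$ and contains the kernel generator $\left(\begin{smallmatrix}1 & 2\,{}^th^\star\\0&\mathbf{1}_{d-1}\end{smallmatrix}\right)$. The surjectivity step is fine and matches the paper.

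The gap is in your ``main obstacle'' paragraph: both strategies you propose are circular as written. In the first, you want to take a word $W$ in the $L_q$'s and compare it with ``an explicit element already known to be in $\K_d^\circ$ with the same $g$''; but every element of $\K_d^\circ$ \emph{is} a word in the $L_q$'s, and the only lift of $g\in\G_{d-1}$ you have in hand is the word-by-word lift, so you are really asking for two distinct words with the same $\varphi_d$-image --- i.e., a relation in $\G_{d-1}$ that fails in $\K_d^\circ$ --- without proposing one. The commutator version has the same defect: the word-lift of $[\varphi_d(L_a),\varphi_d(L_b)]$ is $[L_a,L_b]$ itself, so the comparison yields the identity, not a nontrivial kernel element. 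Your diagnosis that ``the only care needed is checking the multiple is $\pm 2$'' is therefore too optimistic: as stated your candidates give multiple $0$. What is actually required is to exhibit a specific word $W$ in the $L_p$, $p\ne 1-d$, with $\varphi_d(W)=1$ but $W\ne 1$, and this is a genuine construction, not a bookkeeping check.

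The paper supplies this explicitly. It sets $M_p:=L_{p+2}^{-1}L_pL_{p+2}$ for $1-d<p<d-1$ (so $M_p\in\S_{p,p+2}$ acts by $e_p\mapsto 2e_p+e_{p+2}$, $e_{p+2}\mapsto -e_p$), forms $M:=M_{d-3}\cdots M_{3-d}$, then $N:=L_{d-1}^2 M$, and computes that $N$ sends $e_{3-d}\mapsto e_{d-1}-2e_{1-d}$ and $e_p\mapsto -e_{p-2}$ for $p>3-d$. Iterating $d-1$ times gives $N^{d-1}(e_p)=e_p+2(-1)^{(p-d+1)/2}e_{1-d}$ for all $p>1-d$, so $(N^{d-1})^{-1}$ is the required matrix \eqref{eq}. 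This explicit word --- essentially a lifted $(d-1)$-cycle whose $(d-1)$-st power is trivial in $\G_{d-1}$ but not in $\K_d^\flat$ --- is the content your sketch is missing.
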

\begin{proof}
As $\G_d \subset \G'_d$, this stabilizer is contained in $\K_d$. Let $\K^{\flat}_d$ be the subgroup of $\K_d$ generated by the $L_p$, $p\in \A_d, p\ne 1-d$. From the induction hypothesis $\G_{d-1} = \G'_{d-1}$ and Proposition \ref{prop3}, we deduce that the image of $\K^{\flat}_d$ under $\varphi_d$ is equal to $\G'_{d-1}$. In order to
conclude that $\K^{\flat}_d$ is equal to $\K_d$, it is sufficient to show, in view of Proposition \ref{prop5}, that the matrix
\begin{equation}\label{eq}
\left( \begin{array}{cc} 1& 2\ ^th^{\star} \\ 0 & \mathbf{1}_{d-1} \end{array} \right)
\end{equation}
belongs to $\K^{\flat}_d$.

For  $p \in \A_d$, $1-d < p < d-1$ define $M_p:= L_{p+2}^{-1} \circ L_p \circ L_{p+2}$.
This element of $\S_{p,p+2}$ satisfies
$$ M_p(e_p) = 2 e_p + e_{p+2}, \quad M_p(e_{p+2}) = -e_p, \quad M_p(e_q) = e_q \quad {\rm if } \ q \ne p,p+2.$$

Let
$$M := M_{d-3} \circ M_{d-5} \circ \ldots \circ M_{3-d}.$$

The matrix $M$ belongs to $\K_d^{\flat}$ and one has
\begin{eqnarray*}
 M(e_{3-d}) &=& e_{d-1} + 2\sum_{p \in \A_d, 1-d < p < d-1} e_p,\\
 M(e_p) &=& -e_{p-2}  \qquad {\rm for} \ p \in \A_d, p> 3-d.
\end{eqnarray*}

Therefore $N:= L_{d-1}^2 \circ M$ satisfies
$$ N(e_{3-d}) = e_{d-1} -2e_{1-d}, \quad N(e_p) = -e_{p-2}  \quad {\rm for} \ p \in \A_d, p> 3-d.$$

It follows that
$$ N^{d-1}(e_p) = e_p + 2 (-1)^{\frac{p-d+1}2} e_{1-d}, \qquad \forall p \in \A_d, p> 1-d.$$

Thus, the inverse of the matrix $N^{d-1}\in\K_d^{\flat}$ has the required form \eqref{eq}.
\end{proof}

\begin{proposition}\label{prop9}
The orbit of $e_{d-1}$ under $\G_d$ is equal to the set of primitive vectors in $\Zset^{\A_d}$ which are congruent mod.$2$ to a vector in $\mathcal E = \{\sum_{p\in \Ad} \bar e_p\}\cup\{\bar e_p\}_{p \in \Ad}$.
\end{proposition}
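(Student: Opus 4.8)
The plan is to repeat the proof of Proposition \ref{prop7} almost verbatim; the one genuinely new feature is that $d-1$ is now \emph{odd}, so the relevant orbit statement for $d-1$ is Theorem \ref{thm3} rather than Theorem \ref{thm2}, and hence comes with the extra affine-hyperplane constraint ${}^t h^{\star}.w=(-1)^{(d-2)/2}$. For the easy inclusion, write $\mathcal{O}$ for the set of primitive vectors in $\Zset^{\A_d}$ congruent mod.$2$ to a vector in $\mathcal{E}$. Then $e_{d-1}\in\mathcal{O}$ and $\mathcal{O}$ is $\G_d$-stable: every element of $\G_d$ lies in $SL(\Zset^{\A_d})$, hence preserves primitivity, and by Corollary \ref{cor1} its reduction mod.$2$ permutes $\mathcal{E}$. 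So the $\G_d$-orbit of $e_{d-1}$ is contained in $\mathcal{O}$.

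For the reverse inclusion, fix $v\in\mathcal{O}$ and let $v'\in\Zset^{\A_d\setminus\{1-d\}}$ denote its projection obtained by forgetting the $(1-d)$-coordinate; as usual one identifies $\A_d\setminus\{1-d\}$ with $\A_{d-1}$ by a shift. The first and main step is to move $v$, by an element of $\G_d$, into the position where $v'$ is primitive and ${}^t h^{\star}.v'=(-1)^{(d-2)/2}$. The moves available are: elements of $\K_d$ (generated by the $L_p$, $p\ne 1-d$), which act on $v'$ through $\G_{d-1}$ and hence, by Proposition \ref{prop1}, fix the quantity ${}^t h^{\star}.v'$; the generator $L_{1-d}$, which replaces $v'$ by $v'+v_{1-d}\,e^{\sharp}$ and so shifts ${}^t h^{\star}.v'$ by $\pm v_{1-d}$ while leaving $v_{1-d}$ unchanged; and the subgroups $\S_{1-d,p}\cong SL(2,\Zset)$ of Lemma \ref{lem2}, which act on the pair $(v_{1-d},v_p)$ through $SL(2,\Zset)$. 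Since $v$ is primitive, $v_{1-d}$ is coprime to the gcd of the remaining coordinates; a Euclidean-type manipulation with the $\S_{1-d,p}$ and $L_{1-d}$ — the details of which constitute essentially all the work — brings $|v_{1-d}|$ down so that ${}^t h^{\star}.v'$ can be corrected to the required value, with $v'$ primitive. (This has no analogue in the odd case, where after killing $v_{1-d}$ the vector $v'$ is automatically in the correct hyperplane.)

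Once $v$ is in this position, $v'$ is primitive and automatically congruent mod.$2$ to a vector in $\mathcal{E}$ (primitivity of $v'$ forces $v\not\equiv\bar e_{1-d}$, so $v\equiv\bar e_p$ with $p\ne 1-d$ or $v\equiv\bar e^{\star}$, either of which projects into $\mathcal{E}$), so Theorem \ref{thm3} for $d-1$ yields $g\in\G_{d-1}$ with $g.e_{d-1}=v'$ (indices shifted). Let $B\in\K_d$ be a matrix in block form \eqref{eq1} with lower-right block $g$, which exists by Proposition \ref{prop3} since $\G_{d-1}=\G'_{d-1}$. Then $B.e_{d-1}$ agrees with $v$ off the $(1-d)$-coordinate. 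Both $v$ and $B.e_{d-1}$ are congruent mod.$2$ to vectors in $\mathcal{E}$ (for $B.e_{d-1}$ by Corollary \ref{cor1}) and agree in $d-1\geq 3$ coordinates; since distinct vectors in $\mathcal{E}$ differ in at least two coordinates, in fact $B.e_{d-1}\equiv v\pmod 2$, so their $(1-d)$-coordinates are congruent mod.$2$. By Proposition \ref{prop5} the lift $B$ is determined only up to adding an even multiple of ${}^t h^{\star}$ to its line vector, which shifts the $(1-d)$-coordinate of $B.e_{d-1}$ by an arbitrary even integer; choosing the lift appropriately gives $B.e_{d-1}=v$ exactly. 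As $\K_d\subset\G_d$ by Proposition \ref{prop8}, this places $v$ in the $\G_d$-orbit of $e_{d-1}$.

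The main obstacle is the reduction step of the second paragraph. Unlike the odd case, $\G_d$ has no invariant covector (because $\Omega(d)$ is unimodular), so the $\G_d$-orbit of $e_{d-1}$ must fill an entire mod.$2$ congruence class of primitive vectors, which is spread over infinitely many of the affine hyperplanes $\{{}^t h^{\star}.w=c\}$; these are each preserved by $\K_d$, and only the two with $c=\pm1$ are described by Theorem \ref{thm3}. Producing explicit elements of $\G_d\setminus\K_d$ — assembled from $L_{1-d}$ and the $\S_{1-d,p}$, and ultimately from the matrices $M_p$ and $N$ appearing in the proof of Proposition \ref{prop8} — that carry an arbitrary $v\in\mathcal{O}$ into the single hyperplane corresponding to $e_{d-1}$ while keeping $v'$ primitive is the delicate point; afterwards the argument is routine. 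It is convenient to note in passing that all $e_p$, $p\in\A_d$, lie in one $\G_d$-orbit (for instance $e_{d-1}=L_{d-1}L_{1-d}\,e_{1-d}$), so the orbit of $e_{d-1}$ coincides with that of $e_{1-d}$ and one may pass freely between the corresponding stabilizers.
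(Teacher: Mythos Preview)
Your approach is correct and matches the paper's almost exactly: one inclusion is immediate; for the other you reduce $v$ (using $\S_{1-d,p}$ and $L_{1-d}$) to the situation where $v'$ is primitive and lies in the affine hyperplane of Theorem~\ref{thm3}, then lift a $\G_{d-1}$-element to $\K_d$ via Propositions~\ref{prop3} and~\ref{prop5} and invoke Proposition~\ref{prop8}. Your mod~$2$ argument for matching the $(1-d)$-coordinate is a clean rephrasing of the paper's parenthetical ``one only needs even multiples of ${}^t h^\star$ because of the congruence condition''.

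Two comments on the reduction step you flag as ``the delicate point''. First, it is simpler than you suggest, and the matrices $M_p$ and $N$ from Proposition~\ref{prop8} play no role here. The paper proceeds in two clean strokes: (i) an infinite-descent with the $\S_{1-d,p}$ alone forces $v_{1-d}=1$ (since $v$ is primitive, some $v_p$ is not divisible by $v_{1-d}$, and Lemma~\ref{lem2} lets one replace $v_{1-d}$ by $\gcd(v_{1-d},v_p)$); (ii) once $v_{1-d}=1$, your own observation that $L_{1-d}$ shifts ${}^t h^\star.v'$ by $\pm v_{1-d}=\pm1$ lets $L_{1-d}^n$ hit the target value exactly. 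Second, your concern about ``keeping $v'$ primitive'' throughout is unnecessary: primitivity of $v'$ is not maintained along the way but is an automatic \emph{consequence} of reaching ${}^t h^\star.v'=\pm1$ at the end. So the reduction is really just the Euclidean algorithm plus a power of $L_{1-d}$; no further ingenuity is required.
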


\begin{proof}
It is clear that vectors in the orbit of $e_{d-1}$ under $\G_d$ are primitive and congruent mod.$2$ to a vector in $\mathcal E$.
\smallskip

Conversely, let $v = \sum_{p\in \A_d} v_p e_p$ be a primitive vector in  $\Zset^{\A_d}$ which is congruent mod.$2$ to a vector in $\mathcal E$.

\begin{lemma}\label{lem3}
If $\sum_{p\in \A_d, p>1-d} (-1)^{\frac{d-1-p}2} v_p =1$, then $v$ belongs to the orbit of $e_{d-1}$ under $\G_d$.
\end{lemma}
\begin{proof}
Let $v' := \sum_{p\in \A_d, p>1-d} v_p e_p$. From the hypothesis of the lemma, $v'$ is a primitive vector, in particular it is not even. Therefore it is congruent mod.$2$ to one of the $e_p$ (with $p\in \A_d,\,p>1-d$) or to $\sum_{p\in \A_d, p>1-d} e_p$. From Theorem
\ref{thm3} for $d-1$, there exists $g \in \G_{d-1}$ such that, after shifting the indices by $1$, $v'$ is the last column of $g$.  By Propositions \ref{prop3} and \ref{prop5}, there exists $B \in \K_d$ such that $v$ is the last column of $B$ (one only needs even multiples of $^t h^{\star}$ because of the congruence condition). As $\K_d$ is contained in $\G_d$ by Proposition \ref{prop8}, we get the assertion of the lemma.
\end{proof}

\begin{lemma}\label{lem4}
If $v_{1-d} = 1$, then $v$ belongs to the orbit of $e_{d-1}$ under $\G_d$.
\end{lemma}
\begin{proof}
For $w= \sum_{p\in \A_d} w_p e_p$, define $\phi(w) := \sum_{p\in \A_d, p>1-d} (-1)^{\frac{d-1-p}2} w_p$. Observe that, for $w \in \Zset^{\A_d}$, $n\in \Zset$, one has
$$ \phi (L_{1-d}^n (w)) = \phi(w) + n w_{1-d}.$$

If $v_{1-d} =1$, there exists $n \in \Zset$ such that $L_{1-d}^n(v)$ satisfies the hypothesis of Lemma \ref{lem3}. Then $L_{1-d}^n(v)$ belongs to the orbit of $e_{d-1}$ under $\G_d$, and the same is true for $v$.
\end{proof}

\begin{lemma}\label{lem5}
There exists $g \in \G_d$ such that the first component of $g.v$ is equal to one.
\end{lemma}
\begin{proof}
The argument is by infinite descent. It is clear that there exists $g \in \G_d$ such that the first component of $g.v$ is positive. Then we may assume (replacing $v$ by an appropriate $g.v$) that $v_{1-d} >0$ and that, for any $g \in \G_d$, the first component of $g.v$ is either $\leq 0$ or $\geq v_{1-d}$. We have to show that $v_{1-d} =1$. We assume by contradiction that $v_{1-d} >1$.
\smallskip

As $v$ is primitive, there exists $p \in \A_d$, $p>1-d$,  such that $v_p$ is not a multiple of $v_{1-d}$. Let $\bar v_{1-d}>0$ be the smallest common divisor of $v_p, v_{1-d}$. One has $1 \leq \bar v_{1-d} < v_{1-d}$. By Lemma \ref{lem2}, there exists
an element $g$ in the subgroup $\S_{1,p}$ generated by $L_1,L_p$ such that the first coordinate of $g.v$ is equal to $\bar v_{1-d}$. This gives the required contradiction.
\end{proof}

The desired proposition follows from Lemmas \ref{lem4} and \ref{lem5}.
\end{proof}

Similarly to the previous subsection, the induction step for $d$ even follows from Propositions \ref{prop8} and \ref{prop9}.

At this point, the inductive proofs of Theorems \ref{thm1}, \ref{thm2}, \ref{thm3} are now complete.

\section{Dehn twists and hyperelliptic Rauzy diagrams}\label{s.A'Campo}

In this section, we give an alternative proof of the precise version of Theorem \ref{t.AMY-intro} stated as  Theorem \ref{thm1} above. For this sake, we start with a general discussion of Dehn twists arising naturally from certain loops in Rauzy diagrams and then we specialize this discussion to the case of hyperelliptic Rauzy diagrams.

\subsection{General remarks on Rauzy diagrams and Dehn twists}\label{secDehn}

Once again, we assume some familiarity with the reference \cite{Y-Pisa} during this entire subsection.

Let $\A$ be an alphabet with $d \geq 2$ letters, let $\R$ be an arbitrary Rauzy class on $\A$, and let $\D$ be the associated Rauzy diagram.

\subsubsection{The surfaces $M_\pi$ and their decorations}

A partial reference for what follows is \cite[Section 9.2]{Y-Pisa}.

For every $\pi \in \R$, we construct a \emph{canonical} translation surface $M_{\pi}$
with combinatorial data $\pi$ whose length data $\lambda^{can}$ and suspension data $\tau^{can}$ are given by
$$ \lambda^{can}_\alpha = 1, \tau^{can}_\alpha = \pi_b(\alpha) - \pi_t(\alpha), \quad \forall \alpha \in \A.$$
Denote by $g$ the genus of $M_\pi$ and by $s$ the cardinality of $\Sigma_\pi$. Recall that both $g$ and $s$ depend only on $\R$ and $d = 2g +s-1$.

The surface is obtained by identifying parallel sides of a polygon $P_\pi$ whose leftmost vertex, denoted by $U_0$ or $V_0$, is at $0 \in \Cset$. The rightmost vertex, denoted by $U_d$ or $V_d$, is at $d$. The vertices above the real axis are denoted (from left to right) by $U_1, \ldots, U_{d-1}$. The vertices below the real axis are denoted (from left to right) by $V_1, \ldots, V_{d-1}$. As $\sum \tau_\alpha = 0$, Veech's zippered rectangle construction is not needed here.

We denote by $\Sigma_\pi$ the set of marked points of $M_\pi$, we  equip $M_\pi$ with a basepoint $\ast_{\pi}=1/2\in\mathbb{C}$ and we set $O_\pi = d/2 \in \Cset$.   We denote by $\mathcal T\not\ni \ast_{\pi}$ a curvilinear triangle whose sides are a curvilinear ``vertical'' segment $\eta=[U_{d-1},V_{d-1}]$ through $O_\pi$ and the sides $[U_{d-1},U_d]$, $[V_{d-1},V_d]$ of $P_\pi$.

We denote by $\Sigma^*_\pi$ the subset of $M_\pi$ consisting of $O_\pi$ and the midpoints of the sides of $P_\pi$. Its cardinality is equal to $d+1$.

For each $\alpha \in \A$, we define an oriented loop $\theta_\alpha$ in $M_\pi \setminus \Sigma_\pi$, based at $\ast_\pi$:
\begin{itemize}
\item We choose a simple path $\theta^t_\alpha$ (resp.  $\theta^b_\alpha$ ) from $\ast_\pi$ to the middle point of the top (resp. bottom) $\alpha$-side of $P_\pi$ passing through $O_{\pi}$ via the horizontal segment $[\ast_{\pi},O_{\pi}]$; this path is contained in the interior of $P_\pi$ except for its endpoint.
\item We ask that the  $\theta^\va_\alpha$, $\alpha \in \A$, $\va \in \{t,b \}$ are disjoint
except from their endpoints and $[\ast_{\pi}, O_{\pi}]$, and also disjoint from $[U_{d-1},V_{d-1}]$ except at $O_\pi$.
\item $\theta_\alpha$ is the concatenation of $\theta^t_\alpha$ and $(\theta^b_\alpha)^{-1}$ (so that $\theta_\alpha$ is oriented upwards).
\end{itemize}

The difference $M_\pi \setminus \cup_{\alpha \in \A} \theta_\alpha$ is a finite union of open disks. Each of this disks contains exactly one point of $\Sigma_\pi$.

Recall that the fundamental group $\pi_1(M_\pi \setminus \Sigma_\pi,\ast_\pi)$ is a free group on $d = 2g + s -1$ generators, namely, the classes of the $\theta_\alpha$, $\alpha \in \A$: see \cite[Subsection 4.5]{Y-Pisa}, for instance.

\subsubsection{The homeomorphisms $H_\gamma$}

Consider an arrow $\gamma: \pi \to \pi'$ of $\D$. We claim that one can naturally associated to the arrow $\gamma$ an orientation-preserving homeomorphism $H_\gamma : (M_\pi, \Sigma_\pi \cup \Sigma^*_\pi\cup\{\ast_{\pi}\}) \to (M_{\pi'}, \Sigma_{\pi'} \cup \Sigma^*_{\pi'}\cup \{\ast_{\pi'}\})$ which is uniquely defined modulo isotopy (amongst homeomorphisms sending $\Sigma_\pi \cup \Sigma^*_{\pi}\cup\{\ast_{\pi}\}$ to $\Sigma_{\pi'} \cup \Sigma^*_{\pi'}\cup\{\ast_{\pi'}\}$).

The homeomorphism $H_\gamma$ is constructed as follows. We denote by $\alpha_t, \alpha_b$ the letters of $\A$ such that $\pi_t (\alpha_t) = \pi_b (\alpha_b) = d$ and we let $B_\gamma \in SL(\Zset^{\A})$ be the matrix associated to $\gamma$ by the Rauzy--Veech algorithm / KZ-cocycle. We assume that $\gamma$ is of top type (as the bottom case is completely similar).

\begin{itemize}

\item We cut the triangle $\mathcal T$ from $P_\pi$ along $\eta$ and glue\footnote{If $\gamma$ were of bottom type,  we would glue $\mathcal T$ to the top $\alpha_b$-side of $P_\pi$.} it again, after the appropriate translation, through the identification of  the bottom $\alpha_t$-side of $P_\pi$ and the side $[U_{d-1},U_d]$ of $\mathcal T$. We obtain in this way a  polygon $P^0_{\pi'}$ with a pair of curvilinear ``vertical'' sides. This cutting and glueing process corresponds to the basic step of the Rauzy--Veech algorithm. The sides of $P^0_{\pi'}$ are labelled by $\A$ from $0$ in the same cyclical order than for $P_{\pi'}$. In particular, the curvilinear ``vertical'' sides are labelled by $\alpha_t$.
The surface $M^0_{\pi'}$ obtained from $P^0_{\pi'}$ by glueing bottom and top sides of the same name is \emph{canonically isomorphic} to $M_\pi$.

\item We choose an orientation-preserving homeomorphism $h$ from $P^0_{\pi'}$ onto $P_{\pi'}$ with the following properties
\begin{itemize}
\item For each $\alpha \in \A$, $h$ sends the top $\alpha$ side of $P^0_{\pi'}$  onto the top $\alpha$ side of $P_{\pi'}$, and the bottom $\alpha$ side of $P^0_{\pi'}$  onto the bottom $\alpha$ side of $P_{\pi'}$. This is done in a way which is compatible with the identification of top and bottom sides in $P^0_{\pi'}$ and $P_{\pi'}$.
\item For each $\alpha \in \A$, $\alpha \ne \alpha_t$, $h$ sends the midpoint of the top (resp.bottom) $\alpha$-side of $P^0_{\pi'}$ to the midpoint of the top (resp. bottom) $\alpha$-side of $P_{\pi'}$.
\item $h$ sends the point $O_\pi$ (on the top $\alpha_t$-side of $P^0_{\pi'}$) to the
 midpoint of the top $\alpha_t$-side of $P_{\pi'}$.
 \item $h$ sends the midpoint of the bottom  $\alpha_t$-side of $P_\pi$ (which lies inside $P^0_{\pi'}$) to $O_{\pi'}$ and $h$ sends $\ast_{\pi}$ to $\ast_{\pi'}$.
 \end{itemize}
\item Finally, $H_\gamma: M_\pi \equiv M^0_{\pi'} \to M_{\pi'} $ is the homeomorphism induced by $h$.
\end{itemize}
The reader can check that the homotopy class of $H_\gamma$ (mod $\Sigma_\pi \cup \Sigma^*_\pi\cup\{\ast_{\pi}\}$) does not depend on the choices of $h$.

\begin{figure}[h!]
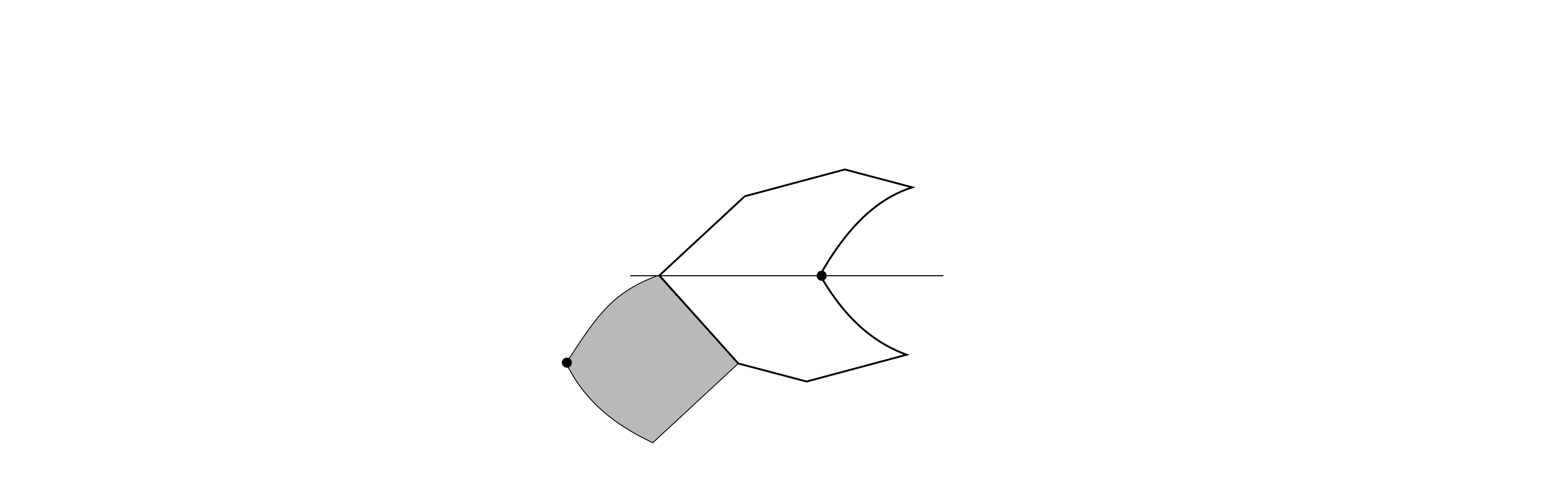\caption{Construction of the homeomorphism $H_{\gamma}$.}
\end{figure}

\subsubsection{Naming the marked points}

Let $\pi \in \R$. For each marked point $a$ in $\Sigma_\pi$, define $\A(\pi,a)\subset \A$ as the subset of letters $\alpha \in \A$ such that $a$ is the left endpoint of the
$\alpha$-sides of $P_\pi$. We have a partition
\begin{equation}\label{e.partition}
\A = \bigsqcup_{a \in \Sigma_\pi} \A(\pi, a).
\end{equation}

It is easy to check that, for any arrow $\gamma:\pi \to \pi'$ of $\D$, the homeomorphism $H_\gamma$ constructed above satisfies\footnote{This is somewhat related to \cite[Proposition 7.7]{Y-Pisa}.}, for any $a \in \Sigma_\pi$
$$ \A( \pi, a) = \A (\pi', H_{\gamma}(a)).$$

In other terms, the partition \eqref{e.partition} above depends only on $\R$, not on $\pi$. Therefore, we can use \eqref{e.partition} to name in a consistent way the points of the various $\Sigma_\pi$, $\pi \in \R$. The homeomorphisms $H_\gamma$ respect the naming.

\begin{remark}
On the other hand, when $\gamma$ is a loop, (i.e $\pi = \pi'$), the homeomorphism $H_\gamma$ permutes in a non trivial way the points of $\Sigma^*_\pi$.
\end{remark}

\subsubsection{The groupoid $\Gamma(\D)$}

Consider the non-oriented Rauzy diagram $\wt\D$ associated to $\D$: it has the same vertices than $\D$; for each arrow $\gamma: \pi \to \pi'$ of $\D$, there are two arrows
$\gamma^+: \pi \to \pi'$ and $\gamma^-: \pi' \to \pi$ in $\wt\D$.

We define $\Gamma(\D)$ as the groupoid of reduced oriented paths in $\wt\D$ (i.e., the groupoid of oriented paths quotiented by the cancellation rules $\gamma^+ \star \gamma^- =\gamma^- \star \gamma^+ =1$).

To each arrow $\gamma^+:\pi \to \pi'$ of positive type of $\wt\D$, we have constructed above a isotopy class $[H_\gamma]$ from $(M_\pi, \Sigma_\pi \cup \Sigma^*_\pi)$ to $(M_{\pi'}, \Sigma_{\pi'} \cup \Sigma^*_{\pi'})$ rel. $\Sigma_\pi \cup \Sigma^*_\pi$ which respects the naming of the points of $\Sigma_\pi$, $\Sigma_{\pi'}$. To an arrow $\gamma^-$ of negative type,
we associate the isotopy class of $H_\gamma^{-1}$. Compare with \cite[Section 9.2]{Y-Pisa}.

We also define a groupoid ${\rm Mod}(\R)$ in the following way. Its vertices are the elements of $\R$. The set ${\rm Mod}(\pi,\pi')$ of arrows from a vertex $\pi$ to a vertex $\pi'$ consists of the isotopy classes of orientation-preserving homeomorphisms  from $(M_\pi, \Sigma_\pi \cup \Sigma^*_\pi\cup\{\ast_{\pi}\})$ to $(M_{\pi'}, \Sigma_{\pi'} \cup \Sigma^*_{\pi'}\cup\{\ast_{\pi'}\})$ rel. $\Sigma_\pi \cup \Sigma^*_\pi\cup\{\ast_{\pi}\}$  which respects the naming of the points of $\Sigma_\pi$, $\Sigma_{\pi'}$. In particular, the image of $\MP:= {\rm Mod}(\pi,\pi)$ under the ``forget $\Sigma^*_\pi\cup\{\ast_{\pi}\}$'' homomorphism is the \emph{pure} mapping class group of $(M_\pi, \Sigma_{\pi})$.

We extend the map $\gamma^+ \mapsto [H_\gamma],\; \gamma^- \mapsto [H_\gamma^{-1}]$ to a morphism of groupoids from $\Gamma(\D)$ to ${\rm Mod}(\R)$. In particular, for each $\pi \in \R$, we have a group homomorphism from the fundamental group $\pi_1(\wt\D,\pi)$ to the pure modular group $\MP$ of $(M_\pi, \Sigma_{\pi})$.

\begin{question} What is the image of this homomorphism from $\pi_1(\wt\D,\pi)$ to $\MP$?
\end{question}

\begin{remark} We give an answer to this question for hyperelliptic Rauzy diagrams in Subsection \ref{ss.A'Campo} below.
\end{remark}

\subsubsection{Action of $H_\gamma$ on the fundamental groups}
Let $\gamma: \pi \to \pi'$ be an arrow of $\D$. We compute the homomorphism $\pi_1(\gamma): \pi_1(M_\pi \setminus \Sigma_\pi, \ast_\pi) \to \pi_1(M_{\pi'} \setminus \Sigma_{\pi'}, \ast_{\pi'})$ induced by $H_\gamma$. We denote by $\alpha_w$ the winner of $\gamma$, by $\alpha_{\ell}$ the loser of $\gamma$. Recall the generators $\theta_\alpha$, $\alpha \in \A$  of $\pi_1(M_\pi \setminus \Sigma_\pi, \ast_\pi)$. The corresponding generators for $\pi_1(M_{\pi'} \setminus \Sigma_{\pi'}, \ast_{\pi'})$ are denoted by $\theta'_\alpha$, $\alpha \in \A$. A direct inspection of our construction shows that:

\begin{proposition}\label{p.Hgamma}
One has $\pi_1(\gamma)(\theta_\alpha) = \theta'_\alpha$, for $\alpha \ne \alpha_{\ell}$, and
$$  \pi_1(\gamma)(\theta_{\alpha_{\ell}}) = \left \{ \begin{array}{ll} \theta'_{\alpha_{\ell}} \star (\theta'_{\alpha_{w}})^{-1} & \text {if $\gamma$ is of top type} \\ &\\(\theta'_{\alpha_{w}})^{-1} \star \theta'_{\alpha_{\ell}} & \text { if $\gamma$ is of bottom type} \end{array} \right.$$
\end{proposition}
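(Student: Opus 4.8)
The plan is to compute directly the effect of the homeomorphism $H_\gamma$ on the marked loops $\theta_\alpha$ that generate $\pi_1(M_\pi\setminus\Sigma_\pi,\ast_\pi)$, treating the top-type and bottom-type cases separately (the bottom case being completely symmetric to the top case). First I would fix the combinatorics: write $\alpha_t$, $\alpha_b$ for the letters with $\pi_t(\alpha_t)=\pi_b(\alpha_b)=d$, so that (for $\gamma$ of top type) $\alpha_w=\alpha_t$ is the winner and $\alpha_\ell=\alpha_b$ is the loser. Recall the intermediate polygon $P^0_{\pi'}$ from the construction of $H_\gamma$: it is obtained from $P_\pi$ by cutting off the curvilinear triangle $\mathcal T$ along $\eta=[U_{d-1},V_{d-1}]$ and re-gluing it, after the Rauzy--Veech translation, along the bottom $\alpha_t$-side of $P_\pi$. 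The surface $M^0_{\pi'}$ built from $P^0_{\pi'}$ is canonically isomorphic to $M_\pi$, and under this isomorphism the loops $\theta_\alpha$ become loops in $M^0_{\pi'}$ which I will track through the homeomorphism $h:P^0_{\pi'}\to P_{\pi'}$.

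The key observation is that $h$ respects top and bottom $\alpha$-sides and their midpoints for every $\alpha\ne\alpha_t$, and sends the path $[\ast_\pi,O_\pi]$ (suitably continued) to $[\ast_{\pi'},O_{\pi'}]$, with $h(\ast_\pi)=\ast_{\pi'}$. Hence for $\alpha\ne\alpha_t=\alpha_w$ the loop $\theta_\alpha$ is carried by $H_\gamma$ to a loop freely homotopic rel $\ast$ and rel $\Sigma$ to $\theta'_\alpha$: nothing essential moves, since these loops are disjoint from the cut locus and from $\mathcal T$ except near $O_\pi$, where $h$ behaves in the prescribed way. This already gives $\pi_1(\gamma)(\theta_\alpha)=\theta'_\alpha$ for all $\alpha\ne\alpha_\ell$ once we also check $\alpha=\alpha_w$: the loop $\theta_{\alpha_w}$ reaches the midpoint of the top $\alpha_t$-side, which $h$ sends to the midpoint of the top $\alpha_t$-side of $P_{\pi'}$ (the image of $O_\pi$ being routed correctly), so again no nontrivial winding is introduced. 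The only loop that genuinely changes is $\theta_{\alpha_\ell}=\theta_{\alpha_b}$: its bottom endpoint sits on the bottom $\alpha_b$-side of $P_\pi$, but the cut-and-paste of $\mathcal T$ forces the path realizing $\theta^b_{\alpha_b}$ to acquire an extra excursion around the point of $\Sigma_\pi$ separating the old and new positions of $\mathcal T$ — concretely, one reads off from the picture that the new loop is homotopic to $\theta'_{\alpha_\ell}$ followed by $(\theta'_{\alpha_w})^{-1}$, with the order determined by the fact that $\gamma$ is of top type (for bottom type one gets $(\theta'_{\alpha_w})^{-1}$ first, then $\theta'_{\alpha_\ell}$).

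The main obstacle is making the last homotopy computation rigorous rather than merely ``reading it off the figure'': one must verify that the prescribed homeomorphism $h$, which is only specified up to isotopy by its action on sides and midpoints, does in fact conjugate $\theta_{\alpha_b}$ to the claimed concatenation, and in particular that the sign/order of the factor $(\theta'_{\alpha_w})^{\pm 1}$ and the side on which it appears are as stated. The clean way to do this is to choose one explicit representative $h$ (e.g.\ a piecewise-affine model on $P^0_{\pi'}$ that is the identity away from a neighbourhood of $\mathcal T\cup\eta$ and realizes the translation of $\mathcal T$ by an isotopy supported near the strip swept out), compute $\pi_1(\gamma)$ for that representative by following the based loops through the strip, and then invoke the already-established fact that $[H_\gamma]$ is independent of the choice of $h$ rel $\Sigma_\pi\cup\Sigma^*_\pi\cup\{\ast_\pi\}$. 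Since on homology this homomorphism must recover the Rauzy--Veech matrix $B_\gamma$ (top type: $\alpha_\ell\mapsto\alpha_\ell$ plus or minus $\alpha_w$), the two candidate answers $\theta'_{\alpha_\ell}\star(\theta'_{\alpha_w})^{-1}$ versus $(\theta'_{\alpha_w})^{-1}\star\theta'_{\alpha_\ell}$ are distinguished only at the homotopy level, so the argument genuinely needs the geometric bookkeeping above; but this is a local, one-triangle computation and presents no conceptual difficulty once a model for $h$ is pinned down.
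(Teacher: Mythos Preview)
Your proposal is correct and takes the same approach as the paper. In fact, the paper offers no proof beyond the single sentence ``A direct inspection of our construction shows that'' preceding the statement, so your detailed tracking of the loops through the cut-and-paste construction of $H_\gamma$ is exactly the direct inspection being invoked, carried out with considerably more care than the paper provides.
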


\subsubsection{Pure cycles in $\D$}

A simple \emph{oriented} loop in $\D$ is called a {\it pure cycle} if all its arrows have the same type (bottom or top). Equivalently, all its arrows have the same winner.

Let $\pi \in \R$. There are exactly two pure cycles through $\pi$. One is made of arrows of top type, with winner $\alpha_t$. Its length is $d- \pi_b(\alpha_t)$. The other is made of arrows of bottom type, with winner   $\alpha_b$. Its length is $d- \pi_t(\alpha_b)$.

In the next proposition, Dehn twists in $M_\pi$ along the curves $\theta_\alpha$, $\alpha \in \A$ are considered as elements of $\MP$ by choosing a representative which is supported in a neighborhood of $\theta_\alpha$ and \emph{exchanges} $O_\pi$ and the midpoint of the $\alpha$-sides of $P_\pi$ (these two points are the only points of $\Sigma^*_\pi$ lying on $\theta_\alpha$).
\begin{proposition}\label{p.purecycles}
Let $\pi \in \R$ and let $\Gamma$ be a pure cycle through $\pi$.
If $\Gamma$ is of top type, the image of $\Gamma \in \pi_1(\wt\D,\pi)$ in $\MP$ is the left Dehn twist along  $\theta_{\alpha_t}$.
If $\Gamma$ is of bottom type, the image of $\Gamma$ in  $\MP$ is the right Dehn twist along  $\theta_{\alpha_b}$.
\end{proposition}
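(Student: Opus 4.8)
The plan is: (i) realize the image of the pure cycle $\Gamma$ in $\MP$ as the composition $H_\Gamma$ of the homeomorphisms attached to its arrows; (ii) compute the automorphism of $\pi_1(M_\pi\setminus\Sigma_\pi,\ast_\pi)$ induced by $H_\Gamma$ by iterating Proposition \ref{p.Hgamma}; (iii) recognize it as the action of a Dehn twist along the simple closed curve freely homotopic to $\theta_{\alpha_t}$; and (iv) upgrade this agreement on $\pi_1$ to an actual equality in $\MP$ using that $H_\Gamma$ is supported near that curve.

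Assume $\Gamma$ is of top type, so its common winner is $\alpha_t=\pi_t^{-1}(d)$ and $\Gamma=\gamma_n\star\cdots\star\gamma_1$ has length $n=d-\pi_b(\alpha_t)$, with $\gamma_i\colon\pi^{(i-1)}\to\pi^{(i)}$ and $\pi^{(0)}=\pi^{(n)}=\pi$. A single top Rauzy--Veech step removes the last letter of the bottom line and reinserts it right after $\alpha_t$, so along $\Gamma$ the block of letters occupying bottom positions $>\pi_b(\alpha_t)$ is cyclically permuted; hence the losers of $\gamma_1,\dots,\gamma_n$ run through the set $L=\{\alpha\in\A:\pi_b(\alpha)>\pi_b(\alpha_t)\}$ exactly once each, while the winner stays equal to $\alpha_t$. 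Using the consistent naming of the $\Sigma_\pi$ and of the generators $\theta_\alpha$ along $\D$, one sets $H_\Gamma:=[H_{\gamma_n}]\circ\cdots\circ[H_{\gamma_1}]\in\MP$ and applies Proposition \ref{p.Hgamma} step by step: each $\theta_\alpha$ with $\alpha\in L$ is modified exactly once and $\theta_{\alpha_t}$ is never modified, which gives
$$\pi_1(\Gamma)(\theta_\alpha)=\theta_\alpha\star\theta_{\alpha_t}^{-1}\quad(\alpha\in L),\qquad\pi_1(\Gamma)(\theta_\alpha)=\theta_\alpha\quad(\alpha\notin L).$$

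The next step is to see that this is the action of a Dehn twist along the simple closed curve freely homotopic to $\theta_{\alpha_t}$. That curve carries exactly the two points $O_\pi$ and the midpoint $m_{\alpha_t}$ of the $\alpha_t$-sides among the points of $\Sigma^*_\pi$; forgetting $\Sigma^*_\pi$ one may push it off these two points, and a direct look at $P_\pi$ shows it meets $\theta_\alpha$ once (algebraically, and geometrically) for $\alpha\in L$ and not at all for $\alpha\notin L$ — compatibly with the transvection $L_{\alpha_t}$ of Section \ref{s.hypRV} at the homology level. Matching the displayed formula with the standard expression for the action of a Dehn twist on the fundamental group identifies $\pi_1(\Gamma)$ with the action of the \emph{left} Dehn twist along $\theta_{\alpha_t}$; the orientation bookkeeping here is exactly what selects ``left''. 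The bottom case is entirely analogous: the winner is $\alpha_b=\pi_b^{-1}(d)$, the losers run through the letters in top positions $>\pi_t(\alpha_b)$, and Proposition \ref{p.Hgamma} gives $\pi_1(\Gamma)(\theta_\alpha)=\theta_{\alpha_b}^{-1}\star\theta_\alpha$ on those losers, which is the action of the \emph{right} Dehn twist along $\theta_{\alpha_b}$ (consistently with Remark \ref{r.monoid-group}).

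Finally one must promote ``same action on $\pi_1(M_\pi\setminus\Sigma_\pi,\ast_\pi)$'' to ``equal in $\MP$''. For this one uses that each $H_{\gamma_i}$ only alters a neighborhood of the curvilinear triangle $\mathcal T$ of $P_{\pi^{(i-1)}}$, and that the successive triangles occurring along $\Gamma$ fill an annular neighborhood $A$ of the curve freely homotopic to $\theta_{\alpha_t}$; with a suitable choice of the auxiliary homeomorphisms $h$, $H_\Gamma$ admits a representative supported in $A$, equal to the identity on $\partial A$, and exchanging $O_\pi$ and $m_{\alpha_t}$ — precisely the data defining the prescribed representative of the Dehn twist. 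Two homeomorphisms supported in the same annulus $A$, fixing $\partial A$, inducing the same permutation of the two marked points on its core and the same automorphism of $\pi_1(M_\pi\setminus\Sigma_\pi,\ast_\pi)$ are then isotopic rel $\Sigma_\pi\cup\Sigma^*_\pi\cup\{\ast_\pi\}$, since their ``quotient'' lies in the point-pushing kernel of the map $\MP\to{\rm MCG}(M_\pi\setminus\Sigma_\pi,\ast_\pi)$, is supported in $A$, fixes those two points, and the only nontrivial such motion is twisting along the core, which has already been matched. I expect this last step (iv) — localizing $H_\Gamma$ in an annular neighborhood of the curve and ruling out residual point-pushing of $\Sigma^*_\pi$-points around the puncture(s) enclosed by it — to be the main obstacle; steps (i)--(iii) are routine once Proposition \ref{p.Hgamma} and the combinatorics of pure cycles are in hand, the only delicate point there being the left/right sign.
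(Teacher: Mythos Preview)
Your proposal is correct and follows exactly the second of the two routes the paper indicates: the paper's entire proof is the sentence ``This fact can be deduced by direct inspection on $M_\pi$ or by computing the action on fundamental groups (in a similar way to Proposition~\ref{p.Hgamma} above)'', and your steps (i)--(iii) carry out that fundamental-group computation in full detail. Your step (iv), which you rightly flag as the delicate one, is the extra cost of the $\pi_1$-route over the ``direct inspection'' route: tracing the cut-and-paste of the triangles $\mathcal T$ along $\Gamma$ exhibits $H_\Gamma$ directly as an annular twist exchanging $O_\pi$ and $m_{\alpha_t}$, which settles the statement in $\MP$ (i.e.\ rel $\Sigma^*_\pi$) without a separate promotion argument.
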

\begin{proof}
This fact can be deduced by direct inspection on $M_{\pi}$ or by  computing the action on fundamental groups (in a similar way to Proposition \ref{p.Hgamma} above).
\end{proof}

\subsection{Hyperelliptic Rauzy diagrams, Dehn twists and braid groups}\label{ss.A'Campo} In this subsection, we restrict ourselves to hyperelliptic Rauzy diagrams. Let $d\geq 2$ be an integer and consider the hyperelliptic Rauzy class $\Rd$ equipped of its central vertex $\pi^* = \pi^*(d)$, and the hyperelliptic Rauzy diagram $\Dd$ introduced in Subsection \ref{ss.hypRauzy}.

\subsubsection{Elementary simple loops in $\Dd$} The canonical surface $M_{\pi^*}$ is hyperelliptic thanks to the hyperelliptic involution $\tau_{\pi^*}$ given by central symmetry at the point $O_{\pi^*}=d/2$. In particular, $O_{\pi^*}=d/2$ and the midpoints of the sides of the polygon $P_{\pi^*}$ contains Weierstrass points of $M_{\pi^*}$. Moreover, the marked point of $M_{\pi^*}$ is a Weierstrass point when $d$ is even, while the marked points of $M_{\pi^*}$ are exchanged by the hyperelliptic involution.

In this setting, Proposition \ref{p.purecycles} says that the image in $\textrm{Mod}(\pi^*)$ of a non-oriented loop $\gamma'\in\pi_1(\wt\Dd,\pi^*)$ associated to an elementary simple loop in $\Rd$ is a Dehn twist exchanging $O_{\pi^*}$ with the midpoint of a side of $P_{\pi^*}$. In other words, we have computed the action of $\gamma'$ at the homotopical level (as promised in Remark \ref{r.homology-homotopy}): again, it  depends only on the type and winner of $\gamma$, and the homotopical action of a loop of bottom type is the inverse of the homotopical action of a loop of top type and the same winner.

\begin{remark} The fact that elementary simple loops in $\Rd$ act by Dehn twists implies that one can not expect the hyperelliptic Rauzy--Veech groups $\mathfrak G_d$ to coincide with the full symplectic group in Theorem \ref{t.AMY-intro}. More precisely, Dehn twists act on homology by symplectic transvections, so that $\mathfrak G_d$ is generated by $d=2g$ symplectic transvections when $d$ is even. However, it is known that one can not generate $\textrm{Sp}(2g,\mathbb{Z})$ with fewer than $2g+1$ symplectic transvections (cf. \cite[Proposition 6.5]{FM}).
\end{remark}

\subsubsection{Symmetric mapping class groups and braid groups} The Dehn twists associated to the elementary simple loops in $\Rd$ commute with the hyperelliptic involution $\tau_{\pi^*}$ of $M_{\pi^*}$. Therefore, the image of the homomorphism from $\pi_1(\wt\Dd,\pi^*)$ to ${\rm Mod}(\pi^*)$ is contained in the \emph{symmetric mapping class subgroup}, that is, the centralizer of $\tau_{\pi^*}$ in ${\rm Mod}(\pi^*)$.

It follows that the Dehn twists associated to elementary simple loops in $\Rd$ are lifts to $M_{\pi^*}$ of certain elements of a \emph{braid group}\footnote{The braid group $B_m$ is the fundamental group of the space $\mathbb{C}^{\langle m\rangle}$ of configurations of finite subsets of $\mathbb{C}$ of cardinality $m$ based at an arbitrarily fixed configuration $\ast\in \mathbb{C}^{\langle m\rangle}$.}. More precisely, the hyperelliptic translation surface $M_{\pi^*}$ can be thought of as the hyperelliptic Riemann surface $y^2=(x-b_1)\dots(x-b_{d+1})$ equipped with the Abelian differential $dx/y$ (whose zeroes are at the points at infinity) for an appropriate choice of configuration $\{b_1,\dots, b_{d+1}\}$ of pairwise distinct points in $\mathbb{C}$. Here, the subset $\{b_1,\dots, b_{d+1}\}$ of Weierstrass points correspond to the set of $\Sigma^*_{\pi^*}$, and, for the sake of concreteness, we make our choices so that $b_{d+1}$ corresponds to $O_{\pi^*}$ while $b_n$, $1\leq n\leq d$ correspond to the midpoints of sides of $P_{\pi^*}$. In this context, we see that the Dehn twists associated to elementary simple loops in $\Rd$ are lifts to $M_{\pi^*}$ of the elements $\theta_n$, $1\leq n\leq d$, of the braid group $B_{d+1}$ exchanging $b_n$ and $b_{d+1}$.

Note that $\{\theta_n: 1\leq n\leq d\}$ is not a system of Artin\footnote{See \cite[Section 9.2]{FM}, for instance.} standard generators $\{\sigma_j:1\leq j\leq d\}$ where $\sigma_j$ exchanges $b_j$ and $b_{j+1}$, but it is not hard to see that $\sigma_j$ can be written in terms of $\theta_j$ and $\theta_{j+1}$ (by conjugation). In particular, $\{\theta_n: 1\leq n\leq d\}$ generates the braid group $B_{d+1}$ and, \emph{a fortiori}, the image of the homomorphism from $\pi_1(\wt\Dd,\pi^*)$ to ${\rm Mod}(\pi^*)$ is precisely the symmetric mapping class group ${\rm SMod}(\pi^*)\simeq B_{d+1}$.

\begin{remark} The symmetric mapping class group ${\rm SMod}(\pi^*)$ is an infinite-index subgroup of ${\rm Mod}(\pi^*)$ corresponding to the orbifold fundamental group\footnote{An interesting consequence of this fact is the non-connectedness of the hyperelliptic Teichm\"uller spaces.} of projectivized hyperelliptic connected components of the moduli spaces of translation surfaces: see, e.g., Looijenga--Mondello \cite{LM}. Thus, we have just shown that, in a certain sense, the hyperelliptic Rauzy diagrams ``see'' the topology of the projectivized hyperelliptic connected components of the moduli spaces of translation surfaces.
\end{remark}

\subsubsection{Monodromy representations of braid groups}

The elements of ${\rm SMod}(\pi^*)$ act on the homology $M_{\pi^*}$. This induces a natural monodromy representation
$$\rho_{d+1}:B_{d+1}\to \textrm{Sp}(H_1(M_{\pi^*},\mathbb{Z}))$$
of the braid group $B_{d+1}$.

It follows from our discussion above of the homomorphism from $\pi_1(\wt\Dd,\pi^*)$ to ${\rm Mod}(\pi^*)$ that the hyperelliptic Rauzy-Veech group $\Gd$ coincides with the image $\rho_{d+1}(B_{d+1})$ of the monodromy representation $\rho_{d+1}$.

As it turns out, the image of $\rho_{d+1}$ was described by A'Campo \cite[Th\'eor\`eme 1]{A'Campo}:
\begin{theorem}[A'Campo]\label{t.A'Campo} Let $d\geq 2$. The image of $\rho_{d+1}$ contains the congruence subgroup of level two of $\textrm{Sp}(H_1(M_{\pi^*},\mathbb{Z}))$. Moreover, the reduction of $\rho_{d+1}(B_{d+1})$ mod.$2$ is isomorphic to a symmetric group of order $d+1$, resp. three, for $d\neq 3$, resp. $d=3$.
\end{theorem}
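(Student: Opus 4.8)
\emph{Two routes to the statement.} Since the discussion just above identifies $\Gd$ with the monodromy image $\rho_{d+1}(B_{d+1})$, the quickest proof is to quote Theorem \ref{thm1}: by definition $\G'_d$ is the full preimage in $Sp(\Omega^{-1}(d),\Zset)$ of $\H_d\subset SL(\Fset_2^{\Ad})$, and $\H_d\simeq\mathfrak{S}_{d+1}$ by Corollary \ref{cor1}; hence the mod-$2$ reduction of $\rho_{d+1}(B_{d+1})=\G'_d$ is $\mathfrak{S}_{d+1}$ (after restricting the action to the symplectic space $H(d)$, which only changes the picture in the degenerate case $d=3$, where the action factors through $SL(2,\Fset_2)\simeq\mathfrak{S}_3$), and $\G'_d$ contains the kernel of reduction mod $2$ on $Sp(\Omega^{-1}(d),\Zset)$ — i.e.\ the level-two congruence subgroup — because the identity lies in $\H_d$. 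This argument, however, uses all of Section \ref{s.AMYhyp1}, so it cannot serve as the input of the ``second proof'' of Theorem \ref{thm1}; I therefore sketch instead a self-contained argument of the kind A'Campo gives.

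\emph{The mod-$2$ reduction (the soft part).} Realize $M_{\pi^*}$ as the hyperelliptic curve $y^2=\prod_{n=1}^{d+1}(x-b_n)$ with the $b_n$ on a line; then the Artin generators $\sigma_1,\dots,\sigma_d$ of $B_{d+1}$ are sent by $\rho_{d+1}$ to the symplectic transvections $T_{c_1},\dots,T_{c_d}$ along the vanishing cycles $c_n$ (the double covers of $[b_n,b_{n+1}]$), which form an $A_d$-chain: $\langle c_n,c_{n+1}\rangle=\pm 1$ and $\langle c_n,c_m\rangle=0$ for $|n-m|\geq 2$ — this is precisely the form $\Omega(d)$ in a suitable basis (compare the computation in the proof of Lemma \ref{lem1}). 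Over $\Fset_2$ a transvection is an involution, the braid relation gives $(\overline{T_{c_n}}\,\overline{T_{c_{n+1}}})^3=1$, and non-adjacent ones commute, so the mod-$2$ image is a quotient of the Coxeter group of type $A_d$, i.e.\ of $\mathfrak{S}_{d+1}$; surjectivity onto $\mathfrak{S}_{d+1}$ (and, for $d=3$, the collapse to $\mathfrak{S}_3$) is then checked directly on generators. This step essentially re-proves Corollary \ref{cor1} on the geometric side.

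\emph{The level-two congruence subgroup (the hard part).} I would show $\Gamma(2)\subseteq\rho_{d+1}(B_{d+1})$ in two steps. First, the $\rho_{d+1}(B_{d+1})$-orbit of the vanishing cycle $c_1$ is, under the identification $\Gd=\rho_{d+1}(B_{d+1})$, exactly the $\Gd$-orbit of $e_{d-1}$ computed in Theorems \ref{thm2} and \ref{thm3}; since $T_{gv}=g\,T_v\,g^{-1}$, this places every transvection $T_v$ — and a fortiori every square $T_v^2\in\Gamma(2)$ — along a primitive $v$ congruent mod $2$ to an element of $\mathcal E$ (with the affine constraint when $d$ is odd) into $\rho_{d+1}(B_{d+1})$. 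Second, one must check that these particular $T_v^2$ already generate $\Gamma(2)$. Both steps I would prove by induction on the number $d+1$ of branch points: forgetting $b_{d+1}$ reduces the statement for $d$ to that for $d-1$, the even/odd dichotomy reflecting whether $\Omega(d-1)$ is unimodular; concretely this is the passage through the stabilizer $\K_d$ of $e_{1-d}$ and the homomorphism $\varphi_d$ of Propositions \ref{prop3}--\ref{prop5}. The main obstacle is exactly this inductive step for $\Gamma(2)$: besides controlling the orbit of the vanishing cycle one must recover the extra one-parameter family of matrices — the even multiples of ${}^t h^\star$ of Proposition \ref{prop5} — that distinguishes $\K_d$ from its image in $\G_{d-1}$, which is the explicit word in the $L_p$ built in the proof of Proposition \ref{prop8}. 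In effect the A'Campo-style proof re-runs the induction of Section \ref{s.AMYhyp1} geometrically, with the mod-$2$ statement amounting to easy $A_d$ Coxeter combinatorics and the congruence-subgroup containment being the real content.
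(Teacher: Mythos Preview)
The paper does not prove this theorem at all: it is quoted verbatim from A'Campo's 1979 paper \cite{A'Campo} and used as a black box. The whole point of Section~\ref{s.A'Campo} is that, once one has identified $\Gd$ with $\rho_{d+1}(B_{d+1})$, A'Campo's result \emph{imported from outside} yields Theorem~\ref{thm1} without any appeal to Section~\ref{s.AMYhyp1}. So there is nothing to compare your argument against; the relevant question is whether your proposal can stand in for the missing external input.

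It cannot, and you have essentially diagnosed why yourself. Your ``first route'' deduces the statement from Theorem~\ref{thm1}, which you correctly flag as circular. But your ``second route'' is no more independent: for the congruence-subgroup containment you invoke the orbit computations of Theorems~\ref{thm2}--\ref{thm3}, the stabilizer description of Propositions~\ref{prop3}--\ref{prop5}, and the explicit word built in Proposition~\ref{prop8}. Those results are not preliminaries; they \emph{are} the inductive proof of Theorem~\ref{thm1} in Section~\ref{s.AMYhyp1}. So your second route is the first route with the logical dependency pushed one layer down, and your closing sentence (``re-runs the induction of Section~\ref{s.AMYhyp1} geometrically'') concedes this. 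What would actually be needed for a self-contained Section~\ref{s.A'Campo} is an argument that produces $\Gamma(2)$ from the braid-group side without ever touching the $L_p$-calculus --- for instance via Picard--Lefschetz theory and the transitivity of the Weyl/monodromy action on vanishing cycles, which is the line A'Campo takes. Your mod-$2$ paragraph (the $A_d$ Coxeter presentation) is fine and genuinely independent; the gap is entirely in the ``hard part''.
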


\begin{remark} Notice that Theorem \ref{t.A'Campo} only describes the action on on the absolute homology, while Corollary \ref{cor1} describes the action on the full relative homology. For this reason, we get slightly different groups in the special case $d=3$.
\end{remark}

In this way, we recover the description of the hyperelliptic Rauzy-Veech group $\Gd = \rho_{d+1}(B_{d+1})$ in Theorems \ref{t.AMY-intro} and \ref{thm1}.

\appendix

\section{A pinching and twisting group with small Zariski closure}\label{a.pinching+twisting-Zariski}

Let $\rho$ be the third symmetric power of the standard representation of $SL(2,\mathbb{R})$. In concrete terms, $\rho$ is constructed as follows. Consider the basis $\mathcal{B} = \{X^3, X^2Y, XY^2, Y^3\}$ of the space $V$ of homogenous polynomials of degree $3$ on two variables $X$ and $Y$. By letting $g=\left(\begin{array}{cc} a & b \\  c & d \end{array}\right)\in SL(2,\mathbb{R})$ act on $X$ and $Y$ as $g(X)=aX+cY$ and $g(Y)=bX+dY$, we obtain an induced action $\rho(g)$ on $V$ whose matrix in the basis $\mathcal{B}$ is
$$\rho\left(\begin{array}{cc} a & b \\ c & d \end{array}\right) = \left(\begin{array}{cccc} a^3 & a^2 b & a b^2 & b^3 \\
3 a^2 c & a^2 d + 2 a b c & b^2 c + 2 a b d & 3 b^2 d \\ 3 a c^2 & b c^2 + 2 a c d & a d^2 + 2 b c d & 3 b d^2 \\
c^3 & c^2 d & c d^2 & d^3 \end{array}\right)$$

Note that the faithful representation $\rho$ is the unique irreducible four-dimensional representation of $SL(2,\mathbb{R})$. Furthermore, the matrices $\rho(g)$ preserve the symplectic structure on $V$ associated to the matrix
$$J = \left(\begin{array}{cccc} 0 & 0 & 0 & -1 \\ 0 & 0 & 1/3 & 0 \\
0 & -1/3 & 0 & 0 \\ 1 & 0 & 0 & 0 \end{array}\right)$$
Indeed, a direct calculation shows that if $g=\left(\begin{array}{cc} a & b \\ c & d \end{array}\right)$, then
$${}^t\rho(g)\cdot J\cdot \rho(g) = \left(\begin{array}{cccc} 0 & 0 & 0 & -(a d - b c)^3 \\
0 & 0 & \frac{(a d - b c)^3}{3} & 0 \\  0 & - \frac{(a d - b c)^3}{3} & 0 & 0 \\ (a d - b c)^3 & 0 & 0 & 0 \end{array}\right)$$
where ${}^t\rho(g)$ stands for the transpose of $\rho(g)$.

Denote by $\mathcal{M}$ the group generated by the matrices
$$A = \rho\left(\begin{array}{cc} 1 & 1 \\ 0 & 1 \end{array}\right) = \left(\begin{array}{cccc} 1 & 1 & 1 & 1 \\ 0 & 1 & 2 & 3 \\ 0 & 0 & 1 & 3 \\
0 & 0 & 0 & 1\end{array}\right) \quad \textrm{and} \quad B = \rho\left(\begin{array}{cc} 1 & 0 \\ 1 & 1 \end{array}\right) = \left(\begin{array}{cccc} 1 & 0 & 0 & 0 \\ 3 & 1 & 0 & 0 \\ 3 & 2 & 1 & 0 \\
1 & 1 & 1 & 1\end{array}\right)$$

On one hand, the group $\mathcal{M}$ has small Zariski closure.

\begin{proposition}\label{p.-Zariski} The group $\mathcal{M}$ is not Zariski dense in $Sp(V)$.
\end{proposition}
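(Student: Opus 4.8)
The plan is to observe that $\mathcal{M}$ is nothing but the image $\rho(SL(2,\mathbb{Z}))$, and that this image is confined to the proper Zariski-closed subgroup $\rho(SL(2,\mathbb{R}))$ of $Sp(V)$. Since $\rho(SL(2,\mathbb{R}))$ has dimension $3$ whereas $Sp(V)$ has dimension $10$, the inclusion is strict, and so $\mathcal{M}$ cannot be Zariski dense.

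Concretely, I would argue as follows. First, I would invoke the classical fact that $\left(\begin{smallmatrix}1&1\\0&1\end{smallmatrix}\right)$ and $\left(\begin{smallmatrix}1&0\\1&1\end{smallmatrix}\right)$ generate $SL(2,\mathbb{Z})$. Since $\rho$ is a group homomorphism and, by construction, $A = \rho\left(\begin{smallmatrix}1&1\\0&1\end{smallmatrix}\right)$ and $B = \rho\left(\begin{smallmatrix}1&0\\1&1\end{smallmatrix}\right)$, this identifies $\mathcal{M} = \rho(SL(2,\mathbb{Z}))$, and in particular $\mathcal{M} \subseteq \rho(SL(2,\mathbb{R}))$. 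Next, I would note that the entries of $\rho\left(\begin{smallmatrix}a&b\\c&d\end{smallmatrix}\right)$ are explicit cubic monomials in $a,b,c,d$, so $\rho$ is a morphism of algebraic groups; being faithful (already $\rho(-\mathrm{Id}) = -\mathrm{Id} \neq \mathrm{Id}$), its image is a $3$-dimensional Zariski-closed subgroup of $GL(V)$, hence of $Sp(V)$. As $3 < 10 = \dim Sp(V)$, the Zariski closure of $\mathcal{M}$ is contained in the proper closed subgroup $\rho(SL(2,\mathbb{R}))$, which proves the proposition. (One may add that $SL(2,\mathbb{Z})$ is Zariski dense in $SL(2,\mathbb{R})$, so that the Zariski closure of $\mathcal{M}$ is in fact exactly $\rho(SL(2,\mathbb{R}))$.)

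The only point that deserves care is the assertion that $\rho(SL(2,\mathbb{R}))$ is Zariski closed; this is a standard fact about homomorphisms of algebraic groups, but I would prefer to record a self-contained argument showing at least that $\mathcal{M}$ lies in a proper Zariski-closed subset of $Sp(V)$, which already suffices. Writing $x_{ij}$ for the matrix-entry functions relative to the basis $\mathcal{B}$, the displayed formula for $\rho$ gives $x_{11} = a^3$, $x_{12} = a^2 b$, $x_{13} = a b^2$, $x_{14} = b^3$, so the polynomial relation
\[
x_{11}\, x_{14} - x_{12}\, x_{13} = 0
\]
holds identically on $\rho(SL(2,\mathbb{R}))$, hence on $\mathcal{M}$. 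On the other hand, this relation is not satisfied everywhere on $Sp(V)$: a one-line verification shows that $\mathrm{Id} + t\, E_{14}$ preserves the form $J$ for every $t \in \mathbb{R}$ (here $E_{14}$ is the matrix unit in position $(1,4)$), and at such a matrix $x_{11} x_{14} - x_{12} x_{13} = t$. Therefore $\mathcal{M}$ is contained in the proper Zariski-closed subset of $Sp(V)$ defined by $x_{11} x_{14} = x_{12} x_{13}$, and in particular $\mathcal{M}$ is not Zariski dense. The whole argument is short; there is no genuine obstacle beyond being careful about the closedness claim just discussed.
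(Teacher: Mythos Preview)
Your proof is correct and follows essentially the same approach as the paper: both argue that $\mathcal{M}=\rho(SL(2,\mathbb{Z}))$ lies in the Zariski-closed proper subgroup $\rho(SL(2,\mathbb{R}))$ of $Sp(V)$, with the paper citing a general result (Witte-Morris) for closedness while you supplement this with a dimension count and an explicit nontrivial polynomial relation $x_{11}x_{14}=x_{12}x_{13}$. The explicit relation is a nice self-contained alternative, but the underlying strategy is identical.
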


\begin{proof} Note that $\rho$ is a \emph{polynomial}\footnote{That is, the entries of $\rho(g)\in Sp(V)$ depend polynomially on the entires of
$g\in SL(2,\mathbb{R})$.} homomorphism from $SL(2,\mathbb{R})$ to $Sp(V)$. In particular, it follows that the image $H=\rho(SL(2,\mathbb{R}))$ of $\rho$ is Zariski closed in $Sp(V)$: see, e.g., Corollary 4.6.5 in Witte-Morrris book \cite{WM}. Since  $SL(2,\mathbb{Z})$ is a Zariski dense subgroup of $SL(2,\mathbb{R})$ generated by $\left(\begin{array}{cc} 1 & 1 \\ 0 & 1 \end{array}\right)$ and $\left(\begin{array}{cc} 1 & 0 \\ 1 & 1 \end{array}\right)$, we see that the Zariski closure of the group $\mathcal{M}$ is $H=\rho(SL(2,\mathbb{R}))$ ($\simeq SL(2,\mathbb{R})$ because $\rho$ is faithful). This proves the proposition (since $H$ is a proper linear algebraic subgroup of $Sp(V)$).
\end{proof}

On the other hand, the group $\mathcal{M}$ is pinching and twisting in the sense of Avila--Viana (see \cite{AV} and \cite[Section 2]{MMY}):

\begin{proposition}\label{p.pinching+twisting} The matrix $A.B\in\mathcal{M}$ is a pinching element\footnote{Its eigenvalues are all real with distinct moduli.} and the matrix $A\in\mathcal{M}$ is twisting\footnote{$A(F)\cap F'=\{0\}$ for all $A.B$-invariant isotropic subspaces $F\subset V$ and all $A.B$-invariant coisotropic subspaces $F'\subset V$ with $\textrm{dim}(F)+\textrm{dim}(F')=4$.} with respect to the pinching element $A.B\in\mathcal{M}$.
\end{proposition}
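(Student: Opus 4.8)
The plan is to verify the two claimed properties of the explicit matrices $A$ and $B$ directly, since everything is four-dimensional and fully concrete. First I would compute the product $A.B$ and find its characteristic polynomial. Since $A = \rho\left(\begin{smallmatrix}1&1\\0&1\end{smallmatrix}\right)$ and $B=\rho\left(\begin{smallmatrix}1&0\\1&1\end{smallmatrix}\right)$, we have $A.B = \rho\left(\begin{smallmatrix}2&1\\1&1\end{smallmatrix}\right)$, and the matrix $\left(\begin{smallmatrix}2&1\\1&1\end{smallmatrix}\right)\in SL(2,\mathbb{Z})$ is hyperbolic with eigenvalues $\mu^{\pm 1}$, $\mu = \frac{3+\sqrt 5}{2}>1$. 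Because $\rho$ is the third symmetric power, the eigenvalues of $\rho(A.B)$ are $\mu^3, \mu, \mu^{-1}, \mu^{-3}$, which are four real numbers with pairwise distinct absolute values. This already shows $A.B$ is pinching, and I would simply record this computation (or equivalently exhibit the characteristic polynomial $t^4 - 21 t^3 + ? \, t^2 - 21 t + 1$ and note its roots are as claimed).

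The twisting condition requires more care but is still a finite check. The pinching element $A.B$ has four distinct real eigenvalues, so its eigenspaces are four lines $\mathbb{R}v_{\mu^3}, \mathbb{R}v_\mu, \mathbb{R}v_{\mu^{-1}}, \mathbb{R}v_{\mu^{-3}}$, and every $A.B$-invariant subspace is a direct sum of some of these lines. The isotropic $A.B$-invariant subspaces $F$ and the coisotropic $A.B$-invariant subspaces $F'$ with $\dim F + \dim F' = 4$ come in the pairs $(\dim F,\dim F') \in \{(0,4),(1,3),(2,2)\}$; the case $\dim F = 0$ is vacuous, and by symplectic duality it suffices to treat $\dim F = 1$ (then $\dim F' = 3$ and $F'$ is the symplectic orthogonal of a line) and $\dim F = 2$ (Lagrangian $F$, with $F' = F$ forced, so $F'$ ranges over $A.B$-invariant Lagrangians). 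Using that $J$ pairs the eigenvalue $\lambda$ eigenspace with the $\lambda^{-1}$ eigenspace (which follows from ${}^t\rho(g) J \rho(g) = J$ for $g\in SL(2,\mathbb{R})$), one lists the finitely many invariant isotropic subspaces explicitly in terms of the $v_\lambda$, applies $A = \rho\left(\begin{smallmatrix}1&1\\0&1\end{smallmatrix}\right)$ to each, and checks the image meets the relevant invariant coisotropic subspace only in $0$. Each such check is a determinant or rank computation with explicit $4\times 4$ integer (or algebraic) matrices.

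The main obstacle I expect is purely bookkeeping: organizing the (finitely many) $A.B$-invariant isotropic subspaces $F$ and coisotropic $F'$ with complementary dimension, and carrying out the transversality checks $A(F)\cap F' = \{0\}$ without error. There is no conceptual difficulty — the pinching of $A.B$ reduces the invariant subspaces to coordinate subspaces in the eigenbasis, and one only needs to confirm that $A$ (a unipotent matrix distinct from the identity, not commuting with $A.B$) moves each of these off the complementary coisotropic subspace. I would present this as: (i) diagonalize $A.B$; (ii) enumerate invariant isotropic/coisotropic pairs; (iii) verify transversality after applying $A$ in each case. An alternative, slicker route that I might prefer in the write-up: invoke a general criterion (as in \cite[Section 2]{MMY}) that a single Galois-pinching element together with one extra element whose ``$(i,j)$ block'' is nonzero for all complementary pairs suffices, so that the twisting check collapses to verifying that certain explicit matrix entries of $A$ in the eigenbasis of $A.B$ are nonzero — again a short computation.
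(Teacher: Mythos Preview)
Your proposal is correct and matches the paper's approach: for pinching the paper simply lists the four eigenvalues $9+4\sqrt{5}>\frac{3+\sqrt{5}}{2}>\frac{3-\sqrt{5}}{2}>\frac{1}{9+4\sqrt{5}}$ (which your symmetric-power observation $\{\mu^3,\mu,\mu^{-1},\mu^{-3}\}$ explains), and for twisting the paper carries out exactly your ``slicker route'' --- it writes $T=M^{-1}AM$ with $M$ the eigenvector matrix of $A.B$ and displays $T$ and $T^{\wedge 2}$ explicitly to verify that all entries and all $2\times 2$ minors are nonzero. One small slip to correct in your write-up: in the $\dim F=2$ case $F'$ is \emph{not} forced to equal $F$ --- both $F$ and $F'$ range independently over the four $A.B$-invariant Lagrangians (as your next clause in fact says), so there are $4\times 4$ minor checks rather than just four.
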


\begin{proof} The first assertion follows from the fact that
$$9+4\sqrt{5} > \frac{3+\sqrt{5}}{2} > \frac{3-\sqrt{5}}{2} > \frac{1}{9+4\sqrt{5}}$$
are the eigenvalues of $A.B\in\mathcal{M}$.

The second assertion is established by the following reasoning. The columns of
$$M = \left(\begin{array}{cccc} -\frac{1}{4} + \frac{(9 + 4 \sqrt{5})}{4} & 1 - \frac{(3 + \sqrt{5})}{2} & 1 - \frac{(3 - \sqrt{5})}{2} &
  -\frac{1}{4} + \frac{(9 - 4 \sqrt{5})}{4} \\
  \frac{9}{8} + \frac{3(9 + 4 \sqrt{5})}{8} & -2 + \frac{(3 + \sqrt{5})}{2} &
   -2 + \frac{(3 - \sqrt{5})}{2} &  \frac{9}{8} + \frac{3(9 - 4 \sqrt{5})}{8} \\
   -\frac{15}{8} + \frac{3(9 + 4 \sqrt{5})}{8} &  \frac{(3 + \sqrt{5})}{2} &
  \frac{(3 - \sqrt{5})}{2} & -\frac{15}{8} + \frac{3(9 - 4 \sqrt{5})}{8} \\
   1 &   1 & 1 & 1 \end{array}\right)$$
consist of eigenvectors of $A.B$. Thus, $T=M^{-1}\cdot A\cdot M$ is the matrix of $A$ in the corresponding basis of eigenvectors of $A.B$. By definition, $A$ is twisting with respect to $A.B$ when all entries of $T$ and all of its $2\times 2$ minors associated to Lagrangian planes are non-zero. As it turns out, this last property holds because a direct computation reveals that $T$ and the matrix $T^{\wedge 2}$ of $2\times 2$ minors are given by:

$$
T = \left(\begin{array}{cccc} \frac{8(5 + 2 \sqrt{5})}{25} & \frac{2(5 + 3 \sqrt{5})}{25} &
  \frac{(5 + \sqrt{5})}{25} & \frac{1}{(
  5 \sqrt{5})} \\
  -\frac{6(5 + 3 \sqrt{5})}{25} & \frac{2(5 + \sqrt{5})}{25} & \frac{7}{5 \sqrt{5}} & -\frac{3(-5 + \sqrt{5})}{25} \\
  \frac{3(5 + \sqrt{5})}{25} & -\frac{7}{5 \sqrt{5}} & -\frac{2(-5 + \sqrt{5})}{25} & \frac{6(-5 + 3 \sqrt{5})}{25}
  \\ -\frac{1}{5 \sqrt{5}} & \frac{(5 - \sqrt{5})}{25} &
  \frac{2}{5} - \frac{6}{5 \sqrt{5}} & -\frac{(8(-5 + 2 \sqrt{5})}{25} \end{array}\right)
$$
and

$$ T^{\wedge 2} = \left(\begin{array}{cccccc}
\frac{56}{25} + \frac{24}{5 \sqrt{5}} & \frac{32}{25} + \frac{16}{5 \sqrt{5}} &
  \frac{18}{25} + \frac{6}{5 \sqrt{5}} & \frac{6}{25} + \frac{2}{5 \sqrt{5}} & \frac{2}{25} + \frac{2}{5 \sqrt{5}} &
  \frac{1}{25} \\

-\frac{32}{25} - \frac{16}{5 \sqrt{5}} & \frac{6}{25} + \frac{2}{5 \sqrt{5}} &
  \frac{9}{25} + \frac{9}{5 \sqrt{5}} & \frac{3}{25} + \frac{3}{5 \sqrt{5}} & \frac{11}{25} & -\frac{2}{25} + \frac{2}{5 \sqrt{5}}  \\

  \frac{6}{25} + \frac{2}{5 \sqrt{5}} & -\frac{3}{25} - \frac{3}{5 \sqrt{5}} & \frac{13}{25} & -\frac{4}{25} & -\frac{3}{25} + \frac{3}{5 \sqrt{5}} &
  \frac{6}{25} - \frac{2}{5 \sqrt{5}} \\
  \frac{18}{25} + \frac{6}{5 \sqrt{5}} & -\frac{9}{25} - \frac{9}{5 \sqrt{5}} & -\frac{36}{25} & \frac{13}{25} & -\frac{9}{25} + \frac{9}{5 \sqrt{5}} &
  \frac{18}{25} - \frac{6}{5 \sqrt{5}} \\

  -\frac{2}{25} - \frac{2}{5 \sqrt{5}} & \frac{11}{25} &
  \frac{9}{25} - \frac{9}{5 \sqrt{5}} & \frac{3}{25} - \frac{3}{5 \sqrt{5}} &
  \frac{6}{25} - \frac{2}{5 \sqrt{5}} & -\frac{32}{25} + \frac{16}{5 \sqrt{5}} \\

  \frac{1}{25} &
  \frac{2}{25} - \frac{2}{5 \sqrt{5}} & \frac{18}{25} - \frac{6}{5 \sqrt{5}} & \frac{6}{25} - \frac{2}{5 \sqrt{5}} &
  \frac{32}{25} - \frac{16}{5 \sqrt{5}} & \frac{56}{25} - \frac{24}{5 \sqrt{5}} \end{array}\right)
$$
\end{proof}

In summary, these propositions say that $\mathcal{M}$ is the desired group: it is pinching and twisting, but not Zariski dense in $Sp(V)$.

\begin{remark} Observe that the group $\rho(SL(2,\mathbb{Z}))$ does not contain Galois-pinching\footnote{Pinching elements whose characteristic polynomials have the largest possible Galois group among reciprocal integral  polynomials (namely, hyperoctahedral groups).} elements of $Sp(V)$ in the sense of \cite{MMY} because $H=\rho(SL(2,\mathbb{R}))$ has rank $1$. Alternatively, this fact can be shown as follows. A straightforward computation reveals that the characteristic polynomial of $\rho(g)$ is
$$(x^2-\textrm{tr}(g)\det(g)x+\det(g)^3)\cdot (x^2 - \textrm{tr}(g)(\textrm{tr}(g)^2 - 3\det(g))x + \det(g)^3)$$
and, consequently, the eigenvalues of $\rho(g)$ are
$$\frac{1}{2}\det(g)\left(\textrm{tr}(g)\pm\sqrt{\textrm{tr}(g)^2 - 4 \det(g)}\right),$$
and
$$\frac{1}{2}\left(\textrm{tr}(g)(\textrm{tr}(g)^2 - 3\det(g)) \pm (\textrm{tr}(g)^2 - \det(g)) \sqrt{\textrm{tr}(g)^2 - 4 \det(g)}\right).$$
Therefore, the Galois group of the characteristic polynomial $\rho(g)$, $g\in SL(2,\mathbb{Z})$, is not the largest possible among reciprocal polynomials of degree four.
\end{remark}

\begin{remark} It seems \emph{unlikely} to find pinching and twisting monoids of symplectic matrices which are not Zariski dense in the context of the Kontsevich--Zorich cocycle. Indeed, Filip's classification theorem \cite{Fi} says that, modulo finite-index and compact factors, a Kontsevich--Zorich monodromy $\mathcal{M}_V$ has Zariski closure $\textrm{Sp}(V)$, $\textrm{SU}(p,q)$, $\textrm{SO}^*(2n)$, $\wedge^k \textrm{SU}(p,1)$ or some spin groups. Thus, \emph{all} matrices in $\mathcal{M}_V$ have two eigenvalues with the same modulus \emph{unless} the Zariski closure of $\mathcal{M}_V$ is isomorphic to $\textrm{Sp}(V)$ modulo finite-index and compact factors. It follows that if $\mathcal{M}_V$ is pinching, then $\mathcal{M}_V$ is Zariski dense in $\textrm{Sp}(V)$ modulo finite-index and compact factors. 
\end{remark}

\end{document}